\newtheorem{theorem}{Theorem}[section]
\newtheorem{lemma}[theorem]{Lemma}
\newtheorem{proposition}[theorem]{Proposition}
\theoremstyle{definition}
\newtheorem{definition}[theorem]{Definition}
\newtheorem{example}[theorem]{Example}
\theoremstyle{remark}
\newtheorem{remark}[theorem]{Remark}
\numberwithin{equation}{section}
\begin{document}

\title{ON EXTENSIONS OF TYPICAL GROUP ACTIONS}

\author{Oleg\,N.~ Ageev
 }
\address{Department of Mathematics and Mechanics, Lomonosov Moscow State
University, Leninskiye Gory, Main Building, GSP-1, 119991 Moscow,
Russia } \email{ageev@mx.bmstu.ru}
\thanks{The author was supported in part by
 the Programme of Support of Leading Scientific Schools of the RF
(grant no. NSh-5998.2012.1)}

\subjclass[2000]{In fact, MSC2010. Primary 37A, 28D, 22F, 20E;
 Secondary 22A05, 22C05, 22D, 54E, 54H, 20K, 43A40, 47A35}

\date{December 12. 2012.}


\keywords{Typical group actions, freeness, approximations.}

\begin{abstract}
For every countable abelian group $G$ we find the set of all its
subgroups $H$ ($H\leq G$) such that a typical measure-preserving
$H$-action on a standard atomless probability space $(X,\mathcal{F}
, \mu)$ can be extended to a free measure-preserving $G$-action on
$(X,\mathcal{F} , \mu)$. The description of all such pairs $H\leq G$
was made in purely group terms, in the language of the dual
$\widehat{G}$, and $G$-actions with discrete spectrum. As an
application, we answer a question when a typical $H$-action can be
extended to a $G$-action with some dynamic property, or to a
$G$-action at all. In particular, we offer first examples of pairs
$H\leq G$ satisfying both $G$ is countable abelian, and a typical
$H$-action is not embeddable in a $G$-action.
\end{abstract}

\maketitle

\section*{}
\section{Introduction}

By a transformation $T$ we mean an invertible measure-preserving map
defined on a non-atomic standard Borel probability space
$(X,\mathcal{F} , \mu)$. Iterations of this map that is sometimes
called an automorphism define an action of $\mathbb{Z}$, thus
forming a subgroup of the group of all transformations of
$(X,\mathcal{F} , \mu)$. The $spectral$ properties of $T$ are those
of the induced ( Koopman's) unitary operator on
 $L^2( \mu)$ defined by
\[
\widehat{T}:L^2( \mu)\rightarrow L^2( \mu);\quad
\widehat{T}f(x)=f(Tx).
\]
Transformations and, a bit more generally, group actions (i.e.
 group representations by transformations) are main objects
  to study in modern
ergodic theory. Investigations describing roughly what is changed if
we go to actions of larger groups or back are one of the steadily
well-developing aspects of ergodic theory. In this paper, we take a
global view and, rather than study specific group actions, we are
interested in the spaces of all group actions.

 \begin{definition} We say that groups are
 \textbf{weakly isomorphic} if they are isomorphic to some subgroups
each other.
 \end{definition}
 It is easy to check that this is an equivalence relation, and
 two groups $G_+$ and $G_-$ are weakly isomorphic if and only if
 there exist two groups $G^*_\pm \geq G_\mp$
  which are isomorphic to
 $G_\pm$ respectively. Let us remind that a group is called
 $cohopfian$ if it is not isomorphic to any proper subgroup, and
 $bounded$ if there exists an upper bound of the orders of its elements.
 It is easy to see that $G$ is cohopfian if and only if it is not
 weakly isomorphic to any proper subgroup, i.e. the set
 of its subgroups  that are weakly isomorphic to $G$ is trivial.
 In particular, if $G$ is cohopfian then
  a weak isomorphism of $H$ and
 $G$ actually implies an isomorphism of $H$ and $G$. However, there
 exist a lot of weakly isomorphic and not isomorphic pairs
  even in the class of (infinite) bounded countable abelian groups. It clearly
  follows
  from an explicit description of weakly isomorphic pairs ( see Section
  6).

We say that a typical group action has some property (or the
property is said to be typical) if the set of elements satisfying
the property contains a dense $G_{\delta}$ subset. Following [13], a
property $B$ is called \textbf{dynamic} if it is invariant with
respect to any measure-preserving isomorphism $\varphi$ (i.e.
$B=\varphi^{-1}B \varphi$) and forms a Baire set (i.e. an almost
open set $B=U\Delta M$, where $U$ is open and $M$ is meager). By the
well-known topological $0-1$ law (see [13], [14] ), which applies to
the standard space of group actions $\Omega_G$ for any countable
group $G$, every dynamic property is meager or typical.

 The complete classification of countable abelian
group pairs $H\leq G$ under the condition for a typical $H$-action
to admit an extension to a free $G$-action comes from the following
main theorem.

 \begin{theorem} Let $G$ be any countable abelian group, $H$
 its subgroup. Suppose $H$ is not an infinite bounded group; then
 a typical $H$-action can be extended to a free $G$-action.
 Suppose $H$ is an infinite bounded group; then a typical $H$-action
 can be extended to a free $G$-action if and only if $G$ is weakly
  isomorphic to $H$.
 \end{theorem}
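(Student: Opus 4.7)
The plan is to split the argument along the dichotomy on $H$ and, in each case, to handle the extension and non-extension directions separately. A common first step I would carry out is to verify that
\[
E_G(H) := \{T \in \Omega_H : T \text{ extends to a free } G\text{-action on } (X,\mathcal{F},\mu)\}
\]
is a $G_\delta$ subset of $\Omega_H$. Extendability can be phrased as a countable intersection of open conditions saying that for each finite $F\subset G$ and each $\varepsilon > 0$ one can find $\varepsilon$-commuting, $\varepsilon$-free transformations $(S_g)_{g\in F}$ that agree up to $\varepsilon$ with the given action on $H\cap F$. Combined with the topological $0$--$1$ law, this reduces the theorem to exhibiting either density of $E_G(H)$ in $\Omega_H$ (for the positive direction) or a comeager set in its complement (for the negative direction).

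When $H$ is not infinite bounded I would produce, for an arbitrary $T\in\Omega_H$ and $\varepsilon >0$, an explicit free $G$-action whose $H$-restriction is $\varepsilon$-close to $T$. If $H$ is finite, this is essentially a product construction with a free $G/H$-action on a Bernoulli factor. If $H$ is infinite and unbounded, the key input is the existence of an element of infinite or arbitrarily large finite order: this allows one to approximate $T$ by odometer- or rank-one-type actions which admit explicit free extensions to $G$ built from inductive limits along the chosen orders. Density in $\Omega_H$ of the approximating family then forces density of $E_G(H)$.

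When $H$ is infinite bounded, the direction ``$G$ weakly isomorphic to $H$ implies extension'' is constructive: using a chosen pair of embeddings $H\hookrightarrow G$ and $G\hookrightarrow H$ one transports a typical $H$-action to produce a free $G$-action whose $H$-restriction is conjugate to the original. The main obstacle is the converse. For $H$ bounded of exponent $N$, every $h\in H$ acts by a transformation of order dividing $N$, so the Koopman operator of each element has spectrum in the $N$-th roots of unity and weak mixing is impossible. A Baire category argument using periodic approximations (available precisely because $H$ is bounded) then shows that the typical $H$-action has pure point spectrum with eigenvalue subgroup equal to a canonical countable dense subgroup $L\leq\widehat{H}$. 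By Halmos--von Neumann the typical action is conjugate to translation on the compact abelian group $\widehat{L}$, so a free $G$-extension amounts to a continuous injective homomorphism $G\hookrightarrow\widehat{L}$ compatible with the canonical $H\hookrightarrow\widehat{L}$. Applying Pontryagin duality, together with the explicit description of weakly isomorphic countable abelian groups announced in Section 6, identifies this existence with the weak isomorphism of $G$ and $H$.

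The principal obstacle I anticipate is the precise spectral identification in the bounded case: showing not merely that the typical $H$-action has pure point spectrum, but that its eigenvalue group is exactly $L$ and not a smaller dense subgroup. This will require a careful genericity argument producing enough eigenfunctions, which I would carry out by a Baire category construction over periodic approximations, constructing characters of the orbit-algebra that are stable under the generic limit. Once that spectral picture is in hand, the reduction of the $G$-extension problem to an embedding problem in $\widehat{L}$, and from there to weak isomorphism via duality, should proceed without further serious issues.
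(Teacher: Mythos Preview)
Your approach to the negative direction in the bounded case rests on a false claim. You assert that for infinite bounded $H$ the typical $H$-action has pure point spectrum, and you justify this by observing that each $T_h$ has finite order, so each individual Koopman operator has spectrum in the $N$-th roots of unity. But that only tells you each $T_h$ \emph{separately} has discrete spectrum; it does not produce a common eigenfunction for all of $H$. In fact the typical action of any infinite countable abelian group---bounded or not---is weakly mixing (Bernoulli actions are free and mixing, their conjugates are dense, and weak mixing is a $G_\delta$ condition), so generically there are no non-constant eigenfunctions at all. The Halmos--von Neumann reduction you propose therefore never gets off the ground, and with it the entire duality argument collapses.

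The paper's obstruction is of a completely different nature: a group version of the weak-closure theorem (Theorem~4.5), saying that for a typical $H$-action $T$ the full centralizer $C\{T_h:h\in H\}$ equals the weak closure $CL\{T_h:h\in H\}$. When $H$ is bounded of exponent $d$ (modulo a finite summand $H^>$), every element of that closure has $d$-th power equal to some $T_{h}$ with $h\in H^>$; hence any putative $G$-extension would force $T_{dg}\in\{T_{dh}:h\in H^>\}$ for every $g\in G$, which is incompatible with freeness once $G$ fails the structural condition equivalent to weak isomorphism with $H$. This centralizer constraint, not spectral rigidity, is the mechanism.

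Two smaller points. First, the set $E_G(H)=\pi_H(F)$ is \emph{a priori} only analytic, not $G_\delta$; the paper gets by with analyticity plus the Baire property and the $0$--$1$ law, and your $\varepsilon$-approximation description does not obviously cut out exactly the free-extendable actions. Second, in the positive directions the paper does not approximate an arbitrary $T$ directly; it builds a single explicit free $G$-action with $CL\{T_g:g\in G\}=CL\{T_h:h\in H\}$ (Examples~3.1, 5.1--5.3), shows via Theorem~4.1 that such an action is a locally dense point for $\pi_H$, and then uses density of its conjugacy class. Your sketch in those cases is plausible in spirit but would need the same machinery to be made rigorous.
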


 The choice of a dynamic property " to be free" was made there
 because, in particular, any free $G$-action $T$ can not be reduced
 to an action of a "smaller" group, i.e. the map $T: G\rightarrow
 \Omega$ is an isomorphism. Besides, the set $F$ of all free $G$-actions
 is \textbf{characteristic} among all dynamic properties for the restriction map
$\pi_H :\Omega_G\rightarrow \Omega_H$, where $\pi_H (T)=T|_H$.
Namely,  a typical $H$-action can be extended to a free $G$-action,
equivalently, $\pi_H (F)$ is a typical set, if and only if maps
$\pi^{\pm 1}_H$ send every second category/typical set onto a second
category/typical set only (see Subsect. 6.1). This means that, for a
typical $H$-action, an extension to a free $G$-action implies an
extension to a $G$-action satisfying any particular typical dynamic
property.

The subject we treat in this paper was indirectly initiated by King
who proved that a typical transformation admits at least one root of
any fixed order (see [24]). It can be viewed as a typical
$n\mathbb{Z}$-action can be extended to a $\mathbb{Z}$-action. The
whole $\mathbb{Z}$-actions are clearly free if we extend  free
$n\mathbb{Z}$-actions. In answer to Rudolph-del Junco's and King's
questions (see [19], [24]), it was proved by Ageev (see [2]) that a
typical transformation is not prime and has infinitely many roots of
any order. Actually, it was an easy corollary  of the fact that a
typical transformation is embeddable in a free $\mathbb{Z}\oplus
G$-action for any finite abelian group $G$. Let us also mention
papers [28], [32], proving the embeddability  of a typical
transformation in actions of some non-discrete groups.

Recently Melleray proved Theorem 1.2 for $H$ being
finitely-generated via category-preserving maps and a generalization
of the classical Kuratowski-Ulam theorem (see [26]).

In order to prove Theorem 1.2, we first follow the traditional way
based on the locally dense points technique by extending the proof
of Theorem 1.2 for $H=\mathbb{Z}$ (see [3]) as much as it is
possible. The novelty is we construct a different explicit set of
locally dense points for $\pi_H$ (see Sections 3, 5). So we get one
more proof of the main results of [24], [2] , and [26]. Finally, to
check no free extendability for group pairs $H\leq G$ left, we apply
a group version of the well-known weak-closure theorem that we prove
for every infinite countable abelian group.

As a bonus, we conclude (see Theorem 6.5) that it is closely related
to extensions of ergodic $H$-actions with discrete spectrum. As an
application of the main theorem we then provide the complete
description of all group pairs $H\leq G$, where $G$ is countable
abelian, admitting some extension to a $G$-action for a typical
$H$-action (see Theorems 6.7-9). Moreover, we show that if a typical
$H$-action can be extended to a $G$-action then, in fact, it is
because $\pi_H$ restricted to natural subspaces in $\Omega_G$ send
every second category/typical set to a second category/typical set
(see Theorem 6.7, Remark 6.11). As another application of the main
theorem we describe all countable abelian groups $G$ with
generically monothetic centralizer for a typical $G$-action (see
Theorem 6.12).

 The paper is organized as
follows. In Sect. 2 we collect all needed technical facts. In Sect.
3 we offer a written version of the proof of Theorem 2 announced in
[3] (i.e. Theorem 1.2 for $H=\mathbb{Z}$ only) as  a model case.
Sect. 4 contains two key subtheorems on locally dense points for
$\pi_H$ and on the centralizer of $G$-actions that are of the
independent interest for possible applications, because, in
particular, of its large constructive potential. Theorem 1.2 and its
equivalent versions are proved in Sect. 6 as a consequence of
results from Sect. 3-5. In Sect. 6 we also show  what is changed in
the description of pairs $H\leq G$ if we wish to extend a typical
$H$-action to a $G$-action satisfying any particular typical dynamic
property. Finally, in Sect. 7 we discuss a non-abelian case and
related questions.

\textbf{Acknowledgements}. The author would like to thank  Penn
State University, the George Washington University, the University
of  Sciences and Technology of China (Hefei), California Institute
of Technology, Texas A$\&$M University, the University of North
Carolina, the S. Banach Mathematical Center (Warsaw), the University
of Geneva, and Institute Henri Poincare (Paris) for their support
and hospitality which implicitly led to the completion of this
paper.

\section{Metrics, finiteness, approximations, locally dense points}

Let us identify measure-preserving transformations of
$(X,\mathcal{F} , \mu)$ which are equal for almost every $x$ of $X$.
On the set $\Omega$ of all such transformations the weak (coarse)
topology is defined by the convergency $T_n\rightarrow T$ iff
$\mu(T_n^{-1}A\Delta T^{-1}A)\rightarrow 0$ for any measurable $A$.
It is equivalent to $\mu(T_nA\Delta TA)\rightarrow 0$ for any
measurable $A$. Let $\xi_n\rightarrow\varepsilon$ for some sequence
of finite measurable partitions $\xi_n$, and $\sigma_n$ be the
finite $\sigma$-algebra of all the $\xi_n$-measurable sets. Consider
the metric $d$ on $\Omega$, where
\[
d(T,S)=\sum\frac{1}{n^2}d_n(T,S), \ d_n(T,S)=\max_{A\in\sigma_n}
 \mu (TA\Delta SA).
\]
This is a left-invariant metric defining the same topology. The
$d$-metric is not complete, but $\Omega$ becomes a Polish
topological group with respect to the metric
$d(T,S)+d(T^{-1},S^{-1})$. It is easy to see that if $<T_1,\ldots,
T_n,S_1,\ldots,S_n>$ is an abelian group then
\[
d(T_1\cdots T_n,S_1\cdots S_n)\leq\sum_id(T_i,S_i).\eqno (1)
\]

The set $\Omega_G$ of all the $G$-actions becomes a Polish space
when it is equipped with the weak topology coming from the
convergency
\[
T(n)\rightarrow T \ \mbox{as}\ n\rightarrow \infty\Leftrightarrow
(\forall g \in G)
 T_g(n)\rightarrow T_g\ \mbox{as}\ n\rightarrow \infty.
\]
\subsection{Finiteness}

\begin{definition} Let $H$ be some subgroup of $G$. We say that
a $G$-action $T$ is $\mathbf{H}$-\textbf{finite} if $<T_g:g\in H>$
is a finite subgroup in $\Omega$. We say that a $G$-action is
\textbf{finite} if it is $G$-finite.
\end{definition}
We need the following simple lemma.
\begin{lemma} Every finite $H$-action $P$ can be extended to a
($H$-finite) $G$-action for any countable abelian group $G$ ($H\leq
G$).
\end{lemma}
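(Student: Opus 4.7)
The plan is to reduce the lemma to a group-theoretic extension problem for the image of $P$, and then to realise the abstract extension on $(X,\mathcal F,\mu)$ via a stabilizer decomposition.

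Set $F = \langle P_h : h \in H \rangle \leq \Omega$, which is finite abelian by hypothesis, and let $K = \ker P$; then $P$ induces an isomorphism $H/K \cong F$. Because $G$ is abelian, $K$ is normal in $G$, and any $G$-action extending $P$ must kill $K$ and therefore factor through $G/K$. Hence the lemma reduces to the following statement, applied with $A = G/K$ (which contains $F\cong H/K$): \emph{for every countable abelian group $A$ with a finite subgroup $F \leq A$, every measure-preserving $F$-action on $(X,\mathcal F,\mu)$ extends to an $A$-action on $(X,\mathcal F,\mu)$.}

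To prove this reduced statement, I partition $X = \bigsqcup_{L \leq F} X_L$ according to the $F$-stabilizer, $X_L = \{x : \mathrm{Stab}_F(x) = L\}$. Each $X_L$ is measurable and, because $F$ is abelian, $F$-invariant, with $F/L$ acting freely on it. In any extension the subgroup $L \leq A$ must act trivially on $X_L$, so the task on $X_L$ becomes that of extending the free $F/L$-action to an $A/L$-action. Pick any free measure-preserving action $S$ of the countable abelian group $A/L$ on a standard atomless probability space $(Z_L,\lambda_L)$; then $S|_{F/L}$ is also free. Hence both $(X_L,\mu|_{X_L})$ and $(Z_L,\lambda_L)$ decompose $(F/L)$-equivariantly as $Y\times (F/L)$, with $F/L$ acting by translation on the second factor and $Y$ a standard atomless probability space after normalisation. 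Because any two such bases are isomorphic, one fixes an $(F/L)$-equivariant measure-preserving isomorphism $Z_L \cong X_L$; transporting $S$ through it yields the desired $A/L$-action on $X_L$.

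Reassembling these constructions across the finitely many subgroups $L \leq F$ produces an $A$-action on $(X,\mathcal F,\mu)$ extending the $F$-action, and pre-composing with $G \to G/K = A$ gives a $G$-action $T$ with $T|_H = P$; since $\{T_h : h\in H\}=F$ is finite, $T$ is $H$-finite. The only point requiring any care is the $(F/L)$-equivariant measure-preserving isomorphism on each stabilizer piece, and this is routine given the uniqueness of standard atomless probability spaces together with the product decomposition of any free finite-group action; I anticipate no genuine obstacle.
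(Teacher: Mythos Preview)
Your argument is correct. Both proofs begin with the same stabilizer decomposition (the paper's ``equal orbits'' partition is exactly your $X=\bigsqcup_L X_L$), but then diverge. The paper proceeds constructively: on each piece it adjoins one element $g_1\in G\setminus H$ at a time, and if $k=\min\{l>0:lg_1\in H\}$ is finite it splits the cells into $k$ equal subcells and defines $P_{g_1}$ as a cyclic shift on those subcells with a twist on the last one to ensure $P_{g_1}^k=P_{kg_1}$; iterating over a generating set yields the extension by hand. You instead invoke the existence of a free $A/L$-action on an abstract standard atomless space and transport it back via an $(F/L)$-equivariant isomorphism, which is slicker and avoids the inductive bookkeeping. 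What the paper's explicit construction buys is that the extension lives in the same combinatorial framework (shifts on refinements of a given partition) used immediately afterwards in Lemma~2.3, where one needs finite $G_k$-actions acting by shifts on $\xi_n$; your abstract transport would not directly yield actions of that specific form. Conversely, your approach makes the group-theoretic content transparent and would adapt more readily to other target automorphism groups.
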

\begin{proof} The partition of $X$ into $P$-orbits is measurable. We
say that orbits of points $x$ and $y$ are equal if  for any $h\in H$
$P_hx=x$ if and only if $P_hy=y$. It defines an equivalence relation
on $X$ partitioning $X$ into finitely many $P$-invariant measurable
sets of points with equal orbits. It is well enough to extend the
$P$-action on each set separately. Let $X$ consist of equal to each
other orbits only. Then there exists a measurable $P$-invariant
partition $\xi=\{ B_1,\ldots,B_n\}$ of $X$ such that $P$ acts on
$X/\xi$ transitively and $\xi$ separates different elements of each
orbit, i.e. $B_i\ni P_gx\neq x\in B_j$ implies $i\neq j$.

Take any $g_1 \in G\backslash H$. If for all $k\neq 0$ $kg_1\notin
H$, then let $P_{g_1}$ be the identity map. If not, then put $k=\min
l>0: lg_1\in H$. We can split every $B_j$ into $k$ $B_j(i)$
measurable sets of equal measure such that $P_hB_j(i)=B_{m(j,h)}(i)$
for any $h\in H, j, i$. It suffices to define $P_{g_1}$ on, say
$B_1$. Put $P_{g_1}B_1(i)=B_1(i+1)$ if $1\leq i<k$, and
$P_{g_1}B_1(k)=P_hB_1(1)$, where $h=kg_1$. Obviously, we can define
$P_{g_1}|_{B_1(k)}$ such that for any $x\in B_1(1)$
$P_{g_1}^kx=P_hx$. By the commutativity, $P_{g_1}$ can be naturally
defined on the whole $X$. It is clear that $P$ is a well-defined
finite $<H,g_1>$-action.

Iterating the above process we get an $H$-finite $G$-action we need.

\end{proof}

\subsection{Approximations by finite actions}

Let $G$ be any countable abelian group. Then $G=
<g'_1,\ldots,g'_k,\ldots>$. Put $G_k=<g'_1,\ldots,g'_k>$. Consider a
sequence of positive integers $q_n$, where $q_n|q_{n+1},
n=1,\ldots,$ and for every positive integer $k$ there exists $n$
such that $k|q_n$. By $\xi_n$ denote the partition of $X=[0,1)$ into
$q_n$ half-open intervals of equal length .

Let $L_{n,k}$ be the set of all $G_k$-actions $ P$ preserving the
partition $\xi_n$ such that for any $g, j$ $ P_g|_{C_j(n)}$ is just
a \textbf{shift} $Q$, i.e. $Qx=x+\alpha$, $x\in [0,1)$, where
$\alpha$ depends on $g,j$ and $\xi_n=\{C_1(n),\ldots,C_{q_n}(n)\}$ .
Obviously, $ P_g$ is a permutation of $X/\xi_n$ for any $g$ and $ P$
is a finite action. Moreover, $L_{n,k}$ is a finite set, and if $
P_gC_j(n)=C_j(n)$, then  $ P_g|_{C_j(n)}$ is the identity map. By
Lemma 2.2 we can extend each $ P$ (in many ways) to some $G$-action
$ P'$. Fixing a representative $ P'$ to each $ P\in L_{n,k} $, we
get a finite collection, noted $L_{n,k}^*$, in $\Omega_G$. Here each
$P$ belongs to many $L_{n,k}$, but its representatives $P'(n,k)$
might be different to each other.

If $G$ is not a $torsion$ group, then put $L_{n,k}'=\{P\in L_{n,k}:
P \mbox{ acts transitively on}$ $ X/\xi_n \}$, $n,k\in \mathbb{N}$,
and $L_{n,k}'^*=\{P'\in L_{n,k}^*: P'|_{G_k}\in L_{n,k}'\}$. If $G$
is an infinite torsion group, then $L_{n,k}' (L_{n,k}'^*)$ is
defined as before for only change the sequence $q_n$ by
$q'_n=\#G_n$. If $G$ is a finite group, then $L_{n,k}'$ is not
defined.

The aim of this subsection is to prove the following lemma.
\begin{lemma} For any positive integers $n_0, k_0$, the sets
\[
\bigcup_{n>n_0}\bigcup_{k>k_0} L_{n,k}^* \mbox{ and }
\bigcup_{n>n_0}\bigcup_{k>k_0} L_{n,k}'^*
\]
are dense in $\Omega_G$.
\end{lemma}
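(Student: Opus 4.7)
My plan is to reduce the density statement to a weak-topology approximation of $T|_{G_k}$ by a carefully chosen finite action, and then to construct that action via a Rokhlin-tower coding. Since $G=\bigcup_k G_k$, a basic neighbourhood of $T\in\Omega_G$ is determined by finitely many elements $h_1,\dots,h_m\in G$ and some $\varepsilon>0$, and by enlarging $k$ past $k_0$ I may assume $h_1,\dots,h_m\in G_k$. For any $P\in L_{n,k}$ with fixed representative $P'\in L_{n,k}^*$ we have $P'_{h_j}=P_{h_j}$, so it suffices to find $P\in L_{n,k}$ with $d(T_{h_j},P_{h_j})<\varepsilon$ for each $j$. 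Inspecting the series $d=\sum n^{-2}d_n$ and using that $q_N\mid q_n$ forces $\xi_n$ to refine $\xi_N$, this further reduces to controlling $\mu(T_{h_j}C_j(n)\,\Delta\,P_{h_j}C_j(n))$ for a single large $n>n_0$.

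For the construction I would apply a Rokhlin-type lemma to the countable abelian action $T|_{G_k}$. Writing $G_k\cong\mathbb{Z}^a\oplus F$ with $F$ finite, I pick a Folner box $B\subset G_k$ of cardinality exactly $q_n$ (a product of integer intervals in the free part together with all of $F$, possible because any divisibility we need is eventually realised along $(q_n)$), together with a base set $A\subset X$ such that $\{T_gA:g\in B\}$ are pairwise disjoint and cover $X$ up to measure $\varepsilon$. Taking a finite-index subgroup $K\leq G_k$ that admits $B$ as a set of coset representatives, the quotient $G_k\twoheadrightarrow G_k/K$ together with a bijection $\phi\colon B\to\{1,\dots,q_n\}$ produces a homomorphism $\sigma\colon G_k\to S_{q_n}$. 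The candidate $P\in L_{n,k}$ is the associated interval-permutation action: on each $C_j(n)$, $P_g$ is the unique translation (modulo $1$) sending $C_j(n)$ onto $C_{\sigma_g(j)}(n)$. Because $B$ is Folner, for each fixed $h_j$ the set $\{g\in B:g+h_j\notin B\}$ has small relative cardinality, so $T_{h_j}$ and $P_{h_j}$ agree under $\phi$ on most of the tower; combining this with the small error set $X\setminus\bigsqcup_{g\in B}T_gA$ yields $d_N(T_{h_j},P_{h_j})=O(\varepsilon)$, and Lemma~2.2 supplies the extension $P'\in L_{n,k}^*$. For the transitive variant I would take $B$ to be a single cyclic orbit of a fixed element of infinite order in $G_k$ in the non-torsion case, and in the infinite-torsion case choose $q'_n=\#G_n$ with $B=G_n$ so that $G_n$ acts regularly on $X/\xi_n$.

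The main obstacle I anticipate is securing the Rokhlin tower of the prescribed finite-quotient shape for an \emph{arbitrary}, possibly non-free, countable abelian action. For free actions this is Conze's $\mathbb{Z}^d$-Rokhlin lemma, extending routinely to $\mathbb{Z}^a\oplus F$ and, more broadly, to amenable groups by Ornstein--Weiss. For non-free $T$ I would stratify $X$ by the pointwise stabiliser of the $G_k$-action: on the infinite-orbit stratum the Rokhlin lemma applies after descending to the free quotient by the stabiliser, while on the finite-orbit stratum $T|_{G_k}$ already factors through a finite group and can be placed directly in $L_{n,k}$ by refining partitions to match $q_n$. A secondary bookkeeping issue is aligning $|B|$ with the prescribed cardinality $q_n$ (or $\#G_n$), which is handled by the divisibility $q_n\mid q_{n+1}$ and the freedom to pass to a larger $n$.
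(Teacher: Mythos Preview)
Your overall strategy---Rokhlin tower plus coding into an interval permutation---is the same as the paper's, but there is a genuine gap in the step where you pass from the tower to the candidate $P$. You build $P$ so that $P_{h_j}$ permutes the intervals $C_i(n)$ by the permutation $\sigma_{h_j}$ coming from $G_k/K$, and you observe that $T_{h_j}$ permutes the tower levels $T_gA$ by essentially the same combinatorics on most of $B$. But these are two \emph{different} partitions of $X$: the tower $\{T_gA:g\in B\}$ has no a priori relation to the interval partition $\xi_n$. ``Agreeing under $\phi$'' is a conjugacy-type statement, not weak closeness; to get $d_N(T_{h_j},P_{h_j})$ small you need $\mu(T_{h_j}A\,\Delta\,P_{h_j}A)$ small for $A\in\sigma_N$, i.e.\ for the \emph{same} sets, and nothing in your construction forces $T_{h_j}C_i(n)$ to be close to $C_{\sigma_{h_j}(i)}(n)$. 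Choosing $|B|=q_n$ and a bijection $\phi$ does not help, since the levels $T_gA$ can be arbitrarily scattered unions of tiny pieces.

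The paper closes this gap by inserting an intermediate approximation. It first restricts attention to free $T$ (dense in $\Omega_G$), applies the multidimensional Rokhlin lemma, and modifies $T$ on a small set to a \emph{finite} $G_k$-action $T'$ with an invariant partition $\eta$ that refines the fixed $\xi_{n'}$ defining the neighbourhood. Only then, for large $n$, does it approximate each atom of $\eta$ by a $\xi_n$-measurable set of the correct cardinality, and defines $P\in L_{n,k}$ to reproduce the $T'$-action on this $\xi_n$-measurable copy $\eta_n$ of $\eta$. Because $\eta$ refines $\xi_{n'}$, the passage $\eta\to\eta_n$ preserves membership in the cylindrical neighbourhood. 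Your argument is missing precisely this bridge between the abstract Rokhlin partition and the concrete intervals. A secondary point: your stratification by stabiliser to handle non-free $T$ is unnecessary---reducing to free $T$ by density, as the paper does, is both simpler and makes the Rokhlin lemma immediately available.
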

\begin{proof} It is well known that the set of all free
actions is dense in $\Omega_G$.

 Fix some neighborhood of a free action $T$ in
$\Omega_G$. It contains a cylindric (open) set
 \[ N(\gamma',{\tilde g_1},\ldots,{\tilde g_l}) = \{S\in \Omega_G :
d(S_{\tilde g_i}, T_{\tilde g_i})< \gamma',\ i=1,\ldots,l \},
(\gamma'>0),
\]
where $d$ is a metric on $\Omega$.
 Choose $ k>k_0$
such that both ${\tilde g_i}\in G_{k}$ $i=1,\ldots,l$, and if $G$ is
not a torsion group then $G_k$ contains at least one element of
infinite order. Next it is convenient to represent $G_k$ as the
direct sum of cyclic subgroups. Namely, let
\[
G_{k}=\bigoplus_{i=1}^{r} < g_i>,
\]
where $\mathbf{deg}\  g_i=p_i, i=1,...,r$, $p_i=+\infty$, $i<r_0$,
$p_i$ are primes for $i=r_0,\ldots,r$, $\mathbf{deg}\ g$ is the
order of $g$. It is clear that, for some $\gamma, \gamma">0$ and for
a positive integer $n'$
 \[ N(T, \gamma, g_1,\ldots,g_r,\xi_{n'})\subseteq N(\gamma",
  g_1,\ldots, g_r)\subseteq
 N(\gamma',{\tilde g_1},\ldots,{\tilde g_l}),
\]
here
\[
 N(T, \gamma, g_1,\ldots, g_r,\xi)=\{S\in
\Omega_G :(\forall  D\in \xi ) \ \mu(S_{ g_i}D\Delta T_{ g_i}D)<
\gamma,\ i=1,\ldots,r \}
 \]
  for any finite measurable
partition $\xi$. Take $m>\max_{i\geq r_0}p_i$, consider sets
\[
K_m=\{g\in G_k: g=\sum_ij_ig_i \mbox{ for some }0\leq j_i<p'_i=\min
\{p_i, m\}, i=1,\ldots,r\}.
\]
By the multidimensional Rokhlin lemma, for any $\varepsilon >0$ we
can find a measurable set, say $A$, such that $\bigsqcup_{g\in K_m}
T_gA$ is $X$ up to $\varepsilon$-measure. Therefore, we get a finite
partition $\eta'$ of $X$. Let us slightly change each $T_{g_i}$ by
$T'_{g_i}$, $i<r_0$. Namely, put $T'_{g_i}x=T_{g_i}^{1-m}x$ for any
$x\in T_gA$, $g\in \{g\in K_m :j_i(g)=m-1\}$, and let $T'$ be the
identity $G_k$-action on $Y=X\backslash \bigsqcup_{g\in K_m} T_gA$.
It is clear that $T'$ is a finite $G_k$-action, and for any $i$
$T'^{p'_i}_{g_i}$ is the identity map. Moreover, for sufficiently
small $\varepsilon$ and $m$ large enough any extension of $T'$,
noted $T'^*$, is an element of $ N(T, \gamma/2, g_1,\ldots,
g_r,\xi_{n'})$. Denote $\eta=\eta'\vee\bigvee_{g\in
G_k}T'_g\xi_{n'}$. Since $\eta$ is a finite subpartition of
$\xi_{n'}$, there exists $\delta>0$ such that
\[
N(T'^*, \delta, g_1,\ldots, g_r,\eta)\subseteq N(T'^*, \gamma/2,
g_1,\ldots,g_r,\xi_{n'}).
\]
Let $\{A_1,\ldots, A_s\}=\{D\in \eta: D\subseteq A\}$. Since $\eta$
is a $T'$-invariant partition, $\eta= \{Y, T'_gA_i, i=1,\ldots,s,
g\in K_m\}$. Then every  $\bigsqcup_{g\in K_m} T'_gA_i$ is a
$T'$-invariant set. Since $\xi_n\rightarrow\varepsilon$, for $n>n_0$
large enough we can approximate every $T'_gA_i$ and $Y$ very well by
some $\xi_n$-measurable set $A_i(\xi_n,g)$ ($Y_n$) such that
$\#\{D\in \xi_n: D\subseteq A_i(\xi_n,g)\}$ does not depend on $g$,
$\#K_m|\#\xi_n$, and $\eta_n= \{Y_n, A_i(\xi_n,g), i=1,\ldots,s,
g\in K_m\}$ is a partition of $X$. It is easy to see that we can
find $P\in L(n,k)$ such that for any $i,j, g\in K_m$
$P_{g_i}A_j(\xi_n,g)=A_j(\xi_n,g+g_i)$, and $P_{g_i}^{p'_i}$ is the
identity map, where if $g=\sum_lj_lg_l$ and $j_i=p'_i-1$, then
$A_j(\xi_n,g+g_i)=A_j(\xi_n,\sum_{l\neq i}j_lg_l)$. This means that
any extension $P^*$ of $P$ is an element of $N(T'^*, \delta,
g_1,\ldots, g_r,\eta)$, and $\cup_{n>n_0}\cup_{k>k_0} L_{n,k}^*$ is
dense in $\Omega_G$.

$A priori$, $P$ constructed above is not transitive on $X/\xi_n$.
However, it has a "good" orbital structure. Namely, $X\backslash
Y_n$ consists of $P$-orbits $O_i=\bigsqcup_{g\in K_m}
P_gC_{s_i}(n)$, $C_j(n)\in \xi_n$. Since $\#K_m|\#\{j:
C_j(n)\subseteq Y_n\}$, changing $P$ on $Y_n$ if $Y (Y_n)$ are not
empty sets, we get the same orbits $O_i$, $i=1, \ldots, q'$ on whole
$X$.

If $G$ is not a torsion group, then $p_1=+\infty$, $p'_1=m$, and we
can change $P_{g_1}$ on $B_n=\sqcup_i\sqcup_{g\in
K_m:j_1(g)=m-1}P_gC_{s_i}(n)$ such that $P\xi_n=\xi_n$, $P$ is a
$G_k$-action by permutations of $X/\xi_n$, and  $X=\sqcup_{g\in
K'_m}P_gC_{s_1}(n)$, where $K'_m$ is defined as $K_m$, the only
difference is we change $p'_1$ by $p''_1=q'p'_1$. Since for $m,n$
large enough the measures of sets $Y, Y_n, \sqcup_{g\in
K_m:j_1(g)=m-1}P_gA, B_n$ are sufficiently small, we get $P\in
L'(n,k)$, and $P^*\in N(T, \gamma, g_1,\ldots,g_r,\xi_{n'})$.

If $G$ is an infinite torsion group, then extend $P$ to a
$G_n$-action $P'$ by defining $P'_{g'_{k+1}},\ldots,P'_{g'_n} $
consecutively. Namely, put $t=\min t'>0: t'g'_{k+1}\in G_k$. Let
$P'_{g'_{k+1}}$ be a shift on every $C_j(n)$,
$P'_{g'_{k+1}}\xi_n=\xi_n$, $P'_{g'_{k+1}}C_{s_i}(n)=C_{s_{i+1}}(n)$
if $i\neq 0 \ (\mbox{mod }t) $, and
$P'_{g'_{k+1}}C_{s_i}(n)=P_gC_{s_{i-t+1}}(n)$ if $i|t $, where
$g=tg'_{k+1}\in G_k$. By the commutativity to every $P_g$, $g\in
G_k$ , we can define $P'_{g'_{k+1}}$ uniquely on whole $X$.
Obviously, the $G_{k+1}$-action $T'$ has $\#G_n/\#G_{k+1}$ orbits on
$X/\xi_n$. Iterating this process, we get $P'\in L'(n,n)$, and
$P'^*\in N(T'^*, \delta, g_1,\ldots, g_r,\eta)$ ($Y$ is the empty
set). Lemma 2.3 is proved.

\end{proof}

\subsection{Locally dense points technique}
Suppose $X$ and $Y$ are Polish spaces and $\varphi : X\to Y$
 is  a continuous map. The subset
$C$ of $Y$ is called an $analytic$ set if there exists a Borel
subset $B$ of $X$ such that $\varphi(B)=C$. Next we will essentially
make use of the fact that every analytic set is almost open ( see
[25]).

We denote by $LocDen\varphi$ the set of all $x\in X$, called
$locally$ $dense$ points, such that for any neighborhood $U(x)$ of
$x$, the set $\varphi (U(x))$ is dense in some neighborhood of
$\varphi(x)$. \vspace{3mm}

 \begin{lemma}(see [24], [2])  Let $LocDen\varphi$
 be dense in $X$. Then $\varphi(X)$ is not meager in $Y$.
 Moreover, $\varphi(A)$ is not meager in $Y$ for every non-meager
 $A$ in $X$.
\end{lemma}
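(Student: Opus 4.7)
The plan is to prove the stronger statement (which implies the first by taking $A=X$) by contrapositive: assuming $\varphi(A)$ is meager in $Y$, we show $A$ is meager in $X$.

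First I would write $\varphi(A)\subseteq\bigcup_{n} F_n$ with each $F_n$ closed and nowhere dense in $Y$. Taking preimages gives
\[
A \subseteq \varphi^{-1}(\varphi(A))\subseteq\bigcup_{n}\varphi^{-1}(F_n),
\]
and by continuity each $\varphi^{-1}(F_n)$ is closed in $X$. If I can show that each $\varphi^{-1}(F_n)$ has empty interior in $X$, then the right-hand side is meager in $X$, so $A$ is meager, contradicting the hypothesis.

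The key step is exactly this: verifying that $\varphi^{-1}(F_n)$ has empty interior, and this is where the locally dense points assumption is used. Suppose toward a contradiction that some nonempty open $U\subseteq X$ is contained in $\varphi^{-1}(F_n)$. Since $\mathrm{LocDen}\,\varphi$ is dense in $X$, there exists $x\in U\cap\mathrm{LocDen}\,\varphi$. By the definition of a locally dense point applied to the neighborhood $U$ of $x$, the set $\varphi(U)$ is dense in some open neighborhood $V$ of $\varphi(x)$. But $\varphi(U)\subseteq F_n$ and $F_n$ is closed, so
\[
V\subseteq \overline{\varphi(U)}\subseteq F_n,
\]
contradicting the fact that $F_n$ is nowhere dense. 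Hence no such $U$ exists.

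I expect the main (very mild) obstacle to be only bookkeeping: making sure the local density condition is invoked correctly — that $V$ is a genuine open neighborhood of $\varphi(x)$ independent of the chosen $U$-density, so that the inclusion $V\subseteq F_n$ really contradicts nowhere density. Note that the analyticity/almost-openness machinery mentioned just before the lemma is not required for the non-meagerness conclusion itself; it becomes relevant only when one subsequently wishes to upgrade "not meager" to "typical on some open set," which for $\varphi(X)$ follows because $\varphi(X)$, being the continuous image of a Polish space, is analytic and hence has the Baire property.
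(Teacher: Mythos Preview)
Your argument is correct and is the standard one. The paper itself does not supply a proof of this lemma at all; it simply cites [24] and [2], so there is nothing to compare against beyond noting that your proof is essentially the argument underlying those references.
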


\section{Model case}

\begin{example} of a free action $T$ of any countable abelian group
$G$ having an element $g_1$,
 of an infinite order such that $T_{g_1}$
is ergodic and has discrete spectrum.
\end{example}
We will only construct such an action, because the proof  is
obvious. At first fix some representation
\[
     G=\bigcup_{k}G_k,
\]
where $G_k=<g_1,\ldots,g_k>$ is a subgroup generated by
$g_1,\ldots,g_k, k\in \mathbb{N}, g_{k+1}\notin G_k, g_1=g$ . Next
we define a sequence of elements ${\tilde {\alpha}}^{(k)}=({\tilde
{\alpha}_1}^{(k)},\ldots)$
  from $\bar X=\mathbb{R}^\mathbb{N}$ as follows.
\par
Denote, ${\tilde {\alpha}}_i^{(1)}=\beta_i$ $i=1,2,\ldots$,
 where (irrational) $\beta_i$ are rationally independent in the sense
$\sum_{i=1}^{n} r_i \beta_i =0$ $\pmod{1}$ gives $r_i=0$
$i=1,\ldots,n$ for any rational $r_1,\ldots, r_n$ and $n\in
\mathbb{N}$. Fix some sequence of irrationals $\gamma_k$.
\par
If for any $n$, $ng_{k+1}\notin G_k$, then put ${\tilde
\alpha}_{k+1}^{(k+1)}=\gamma_k$ and ${\tilde \alpha}_m^{(k+1)}=0$ as
$m\ne k+1$.
\par
If not, take
\[
n_0=\min_{ng_{k+1}\in G_k \atop n>0} n.
\]
\par
Therefore $n_0g_{k+1}=l_1 g_1+\cdots+l_k g_k$ for some $l_i\in
\mathbb{Z}$. Put
\[
\begin{array}{l}
{\tilde\alpha}^{(k+1)}=\frac{l_1}{n_0}{\tilde\alpha}^{(1)}+
\cdots + \frac{l_k}{n_0}{\tilde\alpha}^{(k)}+e^{(k)},\\
e_{k+1}^{(k)}=\frac{1}{n_0}, \  e_m^{(k)}=0 \mbox{ as }  m\ne k+1,
e^{(k)}=(e_1^{(k)}, e_2^{(k)},\ldots).
\end{array}
\]
Let then ${\alpha}^{(k)}=\pi_{\infty}{\tilde\alpha}^{(k)}$, where
$\pi_{\infty}$  is a natural projection from $\bar X$ onto $ X
=\mathbb{R}/\mathbb{Z} \times \mathbb{R}/\mathbb{Z} \times \ldots$.
It is enough to define for any
\[
g=\sum_{i=1}^{k} m_i g_i
\]
the action $T$  by
\[
\begin{array}{l}
T_g=T^{m_1}_{g_1}\cdots T^{m_k}_{g_k},\\
T_{g_k}x=x+\alpha^{(k)} \mbox{ } \pmod{1}
\end{array}
\]
on $(X, \mu)$, where $\mu$ is the Haar measure on $X$. Here, of
course, $T_g$ does not depend on our choice of the representation
$g$ as
\[
\sum_{i=1}^{k} m_ig_i.
\]
\par
\begin{lemma}
{\it The action $T$ constructed in Example 3.1 is a locally dense
point for a map $\pi_1:\Omega_G\to\Omega$, where
$\pi_1(T)=T_{g_1}$.}
\end{lemma}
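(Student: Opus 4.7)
The plan is to verify the locally-dense condition directly: for every basic open neighborhood $U$ of $T$ in $\Omega_G$, show that $\pi_1(U)$ is dense in some $\Omega$-neighborhood of $T_{g_1}$. Without loss of generality $U$ is cylindric, $U=N(T,\gamma,g_1,\ldots,g_r,\xi_{n'})$, testing only finitely many generators. Thus it suffices, for every $S$ in a small enough $d$-neighborhood of $T_{g_1}$, to produce a $G$-action $T'\in U$ with $T'_{g_1}$ close to $S$.

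First I would apply Lemma 2.3 to approximate $T$ inside $U$ by a finite transitive action $P^{*}\in L_{n,k}^{\prime *}$ with $n,k$ chosen large. Since $g_1$ has infinite order and $P$ acts transitively on $X/\xi_n$, the map $P_{g_1}$ is a cyclic shift of the intervals $C_1(n),\ldots,C_{q_n}(n)$. The discrete-spectrum structure of $T_{g_1}$---a rotation of the infinite torus by the rationally independent $\beta_i$---ensures that cyclic permutations of sufficiently fine $\xi_n$ are $d$-dense in a full $\Omega$-neighborhood of $T_{g_1}$. Hence any target $S$ near $T_{g_1}$ can also be approximated by some cyclic permutation $\sigma$ of $\xi_n$.

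The crucial step is then to rebuild $P$ into a new finite $G_k$-action $P'$ with $P'_{g_1}=\sigma$, while keeping each $P'_{g_i}$ close to $T_{g_i}$ on $\xi_{n'}$ and consistent with the abelian relations of $G_k$. This is where the construction of Example 3.1 is tailored: whenever $n_0 g_k=l_1 g_1+\cdots+l_{k-1}g_{k-1}$, the identity $\alpha^{(k)}=\sum_{i<k}\frac{l_i}{n_0}\alpha^{(i)}+e^{(k)}$ with $e^{(k)}_{k+1}=\frac{1}{n_0}$ translates, at the finite level on $\xi_n$, into defining $P'_{g_k}$ as a shift built from the same rational combination of $P'_{g_1}=\sigma,\,P'_{g_2},\ldots,P'_{g_{k-1}}$, with the fresh ``$e^{(k)}$-direction'' supplying the free parameter needed to close the relation $(P'_{g_k})^{n_0}=P'_{n_0 g_k}$. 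Commutativity is automatic since all the shifts live in a common cyclic group, and the construction extends to all of $G$ by Lemma 2.2, yielding the required $T'\in U$.

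The main obstacle is precisely this third step: guaranteeing that an arbitrary small perturbation of $P_{g_1}$ to $\sigma$ can be absorbed by compatible choices of the remaining generator actions, while preserving both the commutation relations and the individual approximations of each $T_{g_i}$. The careful placement in Example 3.1 of each new irrational on an otherwise unused coordinate---making the rotation slots essentially independent---is what makes the underlying system of shift equations solvable for every cyclic $\sigma$ close to $P_{g_1}$, and is what ultimately forces $T$ to be a locally dense point for $\pi_1$.
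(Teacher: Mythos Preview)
Your outline has a real gap at the very point you call the ``crucial step''. First, the claim that $P_{g_1}$ is a cyclic shift of $X/\xi_n$ is false: transitivity of the whole $G_k$-action $P$ on $X/\xi_n$ does not force $P_{g_1}$ to be cyclic (think of $G_k=\mathbb{Z}^2$). So the bridge from Lemma~2.3 to ``replace $P_{g_1}$ by an arbitrary cyclic $\sigma$ close to $S$'' collapses. More importantly, you never explain why, once $P'_{g_1}=\sigma$ is prescribed, one can choose the remaining $P'_{g_i}$ both commuting with $\sigma$, satisfying the exact group relations, \emph{and} individually close to $T_{g_i}$. The appeal to the formula $\alpha^{(k)}=\sum_{i<k}\frac{l_i}{n_0}\alpha^{(i)}+e^{(k)}$ does not transfer to the finite level: rational combinations of elements of $\mathbb{Z}/q_n\mathbb{Z}$ are not available, and the ``fresh $e^{(k)}$-direction'' lives on a new torus coordinate, not inside $\xi_n$.

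The paper bypasses Lemma~2.3 entirely and uses a different mechanism. Since $T_{g_1}$ is ergodic with discrete spectrum it has rank~1, hence $C(T_{g_1})=\overline{\{T_{g_1}^m\}}$; in particular every $T_{\tilde g_i}$ is approximated by a power $T_{g_1}^{m_i}$. Working with the rank-1 Rokhlin towers $\xi_q$ of $T_{g_1}$, any cyclic permutation $T_u$ of $\xi_q$ close to $T_{g_1}$ automatically has $T_u^{m_i}$ close to $T_{\tilde g_i}$. One then splits each tower level into $k_0\cdot p_{r_0+1}\cdots p_r$ pieces to build auxiliary finite-order commuting maps $\tilde T'_{\tilde g_j}$, and defines $\tilde T_{\tilde g_j}$ as $T_u^{m_j}$ composed with these, modified only on the top layer so that the exact relations $\tilde T_{\tilde g_j}^{p_j}=E$ and $T_u=\tilde T_{\tilde g_1}^{k_0}\tilde T_{\tilde g_{r_0+1}}^{l_{r_0+1}}\cdots\tilde T_{\tilde g_r}^{l_r}$ hold. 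The distance estimates then follow from careful bookkeeping. This centralizer-equals-weak-closure step is the missing idea in your argument; without it there is no reason the other generators can be rebuilt near $T_{\tilde g_i}$ once $\sigma$ is fixed.
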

\begin{proof} Every ergodic automorphism with discrete spectrum has rank 1
(see[17]), i.e. for $T_{g_1}$ we can choose([6]) a monotonic
sequence of partitions $\xi'_q=\{C_1(q),\ldots, $ $C_{h(q)}(q),
D(q)\}$ such that $T_{g_1} C_i(q)=C_{i+1}(q)$ for $i<h(q)$,
$\mu(D(q))\to 0$ as $q\to\infty$, and $\xi'_q\to\varepsilon$. For
partitions $\xi_q=\{C_1(q),\ldots, C_{h(q)}(q)\}$ we have
$\xi_q\to\varepsilon$. It is more convenient then to consider in
$\Omega$ the $d$ metric associated to $\{\xi_q\}_{q=1}^{\infty}$.
Pick some neighborhood of $T$. It contains a cylindric set
$N(\gamma',g'_1,\ldots,g'_l) = \{S\in \Omega_G :
d(S_{g'_i},T_{g'_i})< \gamma'$ $i=1,\ldots,l \}$. Choose $k$ such
that $g'_i\in G_k$ $i=1,\ldots,l$. We can represent
$G_k=<g_1,\ldots,g_k>$ as the direct sum of the form
\[
G_k=\bigoplus_{i=1}^{r} <\tilde g_i>,
\]
where ${\tilde g_1},\ldots,{\tilde g_{r_0}} (1\le r_0 \le r)$ have
infinite order, $p_i$ $(p_i< \infty)$ is an order of ${\tilde g_i}$
for $ r_0< i\le r$. Moreover,
\[
\begin{array}{l}
g_1=k_0{\tilde g_1}+l_{r_0+1}{\tilde g_{r_0+1}}+\cdots +
l_{r}{\tilde g_{r}}
\end{array}
\]
for some $k_0,l_i\in \mathbb{N}$. It is clear that, for some
$\gamma>0$, if $d(S_{\tilde g_i},T_{\tilde g_i})<\gamma$
$i=1,\ldots,r$, then $S\in N(\gamma',g'_1,\ldots, g'_l)$. Thus it is
good enough to find a $\delta >0$ and a dense set of ${\tilde
T}_{g_1}$ in $U_{\delta}(T_{g_1})$ such that for some ${\tilde
T}_{\tilde g_1},\ldots, {\tilde T}_{\tilde g_{r}},{\tilde
T}_{g_{k+1}}\ldots$
 we have both
\[
\begin{array}{l}
({\tilde T}_{\tilde g_1},\ldots, {\tilde T}_{\tilde g_{r}},{\tilde
T}_{g_{k+1}}\ldots) \mbox{  forms}  \mbox{ a } G-\mbox{action},
\end{array}
 \eqno (2)
\]
and
\[
d({\tilde T}_{\tilde g_i}, T_{\tilde g_i}) <\gamma \ \ i=1,\ldots,
r, \eqno (3)
\]
where $U_\delta(R)=\{ S\in\Omega :d(R,S)< \delta \}$.
\par
Because $T_{g_1}$ has rank 1, we have $C(T_{g_1})=\overline
{\{T^m_{g_1}\}_{-\infty}^{+\infty}}$  in $\Omega$. Denote
$t=p_{r_0+1}\cdots p_{r}$. Therefore take $m_1,\ldots,m_{r}\in
\mathbb{N}$
 such that
\[
\begin{array}{l}
d(T^{m_i}_{g_1},T_{{\tilde g}_i})< \frac{\gamma}{6} \ \
i=1,\ldots,r,
\end{array}
\eqno (4)
\]
\[
\begin{array}{lcr}
d(T^{m_ip_i}_{g_1},E)=d(T^{m_ip_i}_{g_1}, T^{p_i}_{{\tilde g}_i}) <
\frac{\gamma}{9t}  \ \  r_0<i<r ,
\end{array}
\eqno (5)
\]
\[
\begin{array}{l}
  d(T^{m_1k_0}_{g_1} T^{l_{r_0+1}m_{r_0+1}}_{g_1}\ldots
T^{l_{r}m_{r}}_{g_1}, T_{g_1})= \\
d(T^{m_1k_0}_{g_1} T^{l_{r_0+1}m_{r_0+1}}_{g_1}\ldots
T^{l_{r}m_{r}}_{g_1} , T^{k_0}_{{\tilde g}_1} T^{l_{r_0+1}}_{{\tilde
g}_{r_0+1}} \ldots T^{l_{r}}_{{\tilde g}_{r}}) < \frac{\gamma}{9}.
\end{array}
\eqno (6)
\]
\par
Let $L(q)$ be the set of cyclic permutations $\xi_q$, i.e. $L(q)=\{
T_u\in \Omega : T_u \xi_q=\xi_q$ $\&$ $T_u^{h(q)}=E$ $\&$
 $\forall i$  $(T_u^m C_i(q)=C_i(q) \Leftrightarrow m=0 \pmod{h(q)})\}$.
Since $\xi_q\to\varepsilon$, it implies (see [15]) the density of
$\cup_{q>q_0}L(q)$ in $\Omega$ for any $q_0$. Take $q_0$ such that
\[
\sum_{q>q_0}^{\infty} \frac{1}{q^2} d_q(T,S)<\frac{\gamma}{18t} \
\eqno (7)
\]
for any $T,S\in \Omega$. Using (6), choose $\delta >0$ such that
\[
\forall T_k \in\Omega:{} d(T_{g_1},T_k)<\delta \Rightarrow
d(T^{m_i}_{g_1},T^{m_i}_k) < \frac{\gamma}{6} \  \& \eqno (8)
\]
\[
d(T_{g_1}^{m_ip_i},T_k^{m_ip_i})<\frac{\gamma}{9t}, \ \ r_0<i\le r \
\& \eqno (9)
\]
\[
d(T_k^{m_1k_0}T_k^{l_{r_0+1}m_{r_0+1}}\cdots T_k^{l_r m_r}, T_k)<
2\frac{\gamma}{9}. \eqno (10)
\]
For every
\[
T_u\in\bigcup_{q>q_0}L(q)\cap U_{\delta}(T_{g_1}),
\]
let us define ${\tilde T}_{{\tilde g}_1} , \ldots ,{\tilde
T}_{{\tilde g}_{r}}$ such that (2) and (3) hold. (Here $T_u={\tilde
T}_{g_1}$, and the choice of remaining ${\tilde T}_{g_{k+1}},{\tilde
T}_{g_{k+2}},\ldots$ is more or less obvious, because all
$G_k$-actions below are finite).
\par
Consider $\xi_q$, where $q$ is defined by $T_u\in L(q)$ $(q>q_0)$.
Change the numbering of $C_j(q)$ by $C_{(i)}(q)$, where $i$ is
considered modulo $h(q)$ and is found from $T_u^i C_1(q)=C_j(q)$.
Then
\[
T_u C_{(i)}=C_{(i+1)}(q) \ \ (i \ \ \pmod{h(q))}.
\]
Cut every $C_{(i)}(q)$ into $k_0\cdot p_{r_0+1}\cdots p_{r}$
measurable sets $C_{(i)}(i_{r_0},\ldots, i_{r})$ of the equal
measure, where the collection $(i_{r_0},\ldots, i_{r})$ is
considered modulo $(k_0, p_{r_0+1},\ldots, p_{r})$ and
\[
T_u C_{(i_0)} (i_{r_0},\ldots, i_{r}) = C_{(i_0+1)}(i_{r_0},\ldots,
i_{r})
\]
for any $(i_0,i_{r_0},\ldots, i_{r})$ $(i_0\pmod{h(q)})$.
\\
It is clear that we can find ${\tilde T}'_{{\tilde g}_j} \
j=r_0,r_0+1,\ldots, r$ such that
\[
{\tilde T}'_{{\tilde g}_j} C_{(i_0)} (i_{r_0},\ldots,i_{r}) =
C_{(i_0)} (i_{r_0},\ldots, i_{j-1},i_{j}+1,i_{j+1},\ldots,i_{r}),
\]
 $T_u, {\tilde T}'_{{\tilde g}_{r_0}},\ldots,
{\tilde T}'_{{\tilde g}_{r}}$ form an abelian  group, and have order
$h(q),k_0,p_{r_0+1},\ldots,p_{r} $ respectively. Moreover, this
group is just the direct sum of cyclic subgroups generated by them.

From our choice above ${\tilde T}_{{\tilde g}_j}$ are needed to be
very close to $T^{m_j}_u$. For $x\in C_{(i)} (i_{r_0},\ldots,i_{r})$
let
\[
 {\tilde T}_{{\tilde g}_j}=T_u^{m_j}\  \mbox{for} \ 1<j\le r_0,
\]
\[
{\tilde T}_{{\tilde g}_j}x = \left\{
\begin{array}{lc}
T_u^{m_j}{\tilde T}'_{{\tilde g}_j}x & \mbox{if} \ \ i_j\ne p_j-1\\
T_u^{m_j-p_jm_j}{\tilde T}'_{{\tilde g}_j}x  &  \mbox{if} \ \ i_j=
p_j-1
\end{array}
\mbox{for } r_0< j \le r, \right.
\]

\[
{\tilde T}_{{\tilde g}_1}x = \left\{
\begin{array}{lc}
T_u^{m_1}{\tilde T}'_{{\tilde g}_{r_0}}x & \mbox{if} \ \ i_{r_0} \ne
k_0-1 \\ {\tilde T}_{{\tilde g}_{r_0+1}}^{-l_{r_0+1}}\ldots {\tilde
T}_{{\tilde g}_{r}}^{-l_{r}} T_u^{1+m_1-k_0 m_1} {\tilde
T}'_{{\tilde g}_{r_0}}x &
 \ \ \mbox{if} \ \
i_{r_0} = k_0-1 .
\end{array}\right.
\]
It is clear that automorphisms (or permutations of $C_{(i)}
(i_{r_0},\ldots ,i_{r})$) ${\tilde T}_{{\tilde g}_1}, {\tilde
T}_{{\tilde g}_2},\ldots, {\tilde T}_{{\tilde g}_{r}}$ form a
non-free $G_k$-action such that $ T_u={\tilde T}_{{\tilde
g}_1}^{k_0} {\tilde T}_{{\tilde g}_{r_0+1}}^{l_{r_0}+1} \cdots
{\tilde T}_{{\tilde g}_{r}}^{l_r} $. For $1<j\le r_0$, using
(4),(8), we have
\[
d({\tilde T}_{{\tilde g}_j},T_{{\tilde g}_j})=d(T_u^{m_j},
T_{{\tilde g}_j})\le d(T_u^{m_j},T_{g_1}^{m_j})+
d(T_{g_1}^{m_j},T_{{\tilde g}_j})<\frac{\gamma}{3}.
\]
In order to confirm (3) for $r_0<j\le r$, we take an element
$A\in\xi_{q_1}$, where $q_1\le q_0$. The set $A$ consists of a union
of some $C_{(i)}(q)$. Let
\[
H_0(j)=\bigcup_{i,i_{r_0},\ldots,i_{r}} C_{(i)} (i_{r_0},\ldots
,i_{j-1},0,i_{j+1},\ldots,i_{r}).
\]
By definition,
\[
\begin{array}{l}
\mu(T_u^{-m_j}A\triangle {\tilde T}_{{\tilde g}_j}^{-1}A) =
\mu(T_u^{m_j}A\triangle {\tilde T}_{{\tilde g}_j}A) =\\
2\mu(T_u^{m_j}A \backslash {\tilde T}_{{\tilde g}_j}A) =
2\mu((T_u^{m_j}A \bigcap H_0(j))\backslash
({\tilde T}_{{\tilde g}_j}A\bigcap H_0(j)))=\\
2\mu((T_u^{m_j}A \bigcap H_0(j))\backslash (T_u^{m_j-p_jm_j} A
\bigcap H_0(j)))=
\frac{2}{p_j}\mu(A\backslash T_u^{-p_jm_j}A)=\\
\frac{1}{p_j}\mu(T_u^{p_j m_j}A \triangle A).
\end{array}
\]
Therefore,
\[
d_{q_1}(T_u^{m_j},{\tilde T}_{{\tilde g}_j})= \frac{1}{p_j}
d_{q_1}(T_u^{p_jm_j},E)  \mbox{  for }\ q_1\le q_0.
\]
Now if we recall (5), (7), (9), we get
\[
d(T_u^{m_j},{\tilde T}_{{\tilde g}_j})\le \frac{1}{p_j} d (T_u^{p_j
m_j},E)+\frac{\gamma}{9t}\le \frac{\gamma}{3t}, \eqno (11)
\]
Combining this with (4), (8), we obtain
\[
d({\tilde T}_{{\tilde g}_j},T_{{\tilde g}_j})\le d({\tilde
T}_{{\tilde g}_j},T_u^{m_j}) +d(T_u^{m_j},T_{g_1}^{m_j}) +
d(T_{g_1}^{m_j},T_{{\tilde g}_j})\le 2\frac{\gamma}{3}. \eqno (12)
\]
For remaining $j=1$, as above, we have
\[
\begin{array}{l}
\mu(T_u^{m_1}A\triangle {\tilde T}_{{\tilde g}_j}A) =
\frac{2}{k_0}\mu(A\backslash {\tilde T}_{{\tilde
g}_{r_0+1}}^{-l_{r_0+1}}\cdots
{\tilde T}_{{\tilde g}_{r}}^{-l_{r}}T_u^{1-k_0 m_1}A)) =\\
\frac{1}{k_0}\mu(T_u A\triangle T_u^{k_0 m_1} {\tilde T}_{{\tilde
g}_{r_0+1}}^{l_{r_0+1}}\cdots
{\tilde T}_{{\tilde g}_{r}}^{l_{r}}A); \\
d(T_u^{m_1},{\tilde T}_{{\tilde g}_1})\le \frac{1}{k_0}
d(T_u,T_u^{k_0 m_1}{\tilde T}_{{\tilde g}_{r_0+1}}^{l_{r_0+1}}
\cdots {\tilde T}_{{\tilde g}_{r}}^{l_{r}})+\frac{\gamma}{9t}.
\end{array}
\eqno (13)
\]
Taking into account  $(1)$, from (11) we have
\[
d(T_u^{m_js},{\tilde T}_{{\tilde g}_{j}}^{s})\le\frac{s\gamma}{3t}
\]
for $j> r_0$ , $s\in \mathbb{N}$. Thus
\[
d({\tilde T}_{{\tilde g}_{r_0+1}}^{l_{r_0+1}}\cdots {\tilde
T}_{{\tilde g}_{r}}^{l_{r}}, T_u^{l_{r_0+1}m_{r_0+1}} \cdots
T_u^{l_r m_r})\le \frac{\gamma}{3}.
\]
Combining this with (10),(13), we obtain
\[
d(T_u^{m_1},{\tilde T}_{{\tilde g}_{1}})\le \frac{1}{k_0} d(T_u,
T_u^{k_0m_1} T_u^{l_{{r_0+1}m_{r_0+1}}}\cdots
T_u^{l_{r}{m_{r}}})+\frac{\gamma}{9t}+\frac{\gamma}{3}\le
\frac{2\gamma}{3}.
\]
As in (12) we have
\[
d({\tilde T}_{{\tilde g}_{1}}, T_{{\tilde g}_{1}})<\gamma.
\]
Lemma 3.2 is proved.
\end{proof}

\par
\subsection{ Extensions of typical automorphisms to group actions}
\par

\begin{theorem}  Let $G$ be any countable abelian group
having an element $g_1$ of infinity order. Then a typical
automorphism is embeddable as $T_{g_1}$ in a free group action $T$
of $G$.
\end{theorem}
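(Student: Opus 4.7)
The plan is to show that the analytic set
\[
E := \pi_1(F) = \{S \in \Omega : S = T_{g_1}\ \text{for some free}\ T \in \Omega_G\}
\]
is typical in $\Omega$, where $\pi_1 : \Omega_G \to \Omega$ is the continuous restriction $T \mapsto T_{g_1}$ and $F \subset \Omega_G$ is the dense $G_\delta$ of free $G$-actions.

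First, I would apply the topological $0$--$1$ law on $\Omega$. The set $E$ is invariant under conjugation in $\Omega$: given $S = T_{g_1}$ with $T$ free and any $\psi \in \Omega$, the conjugate $\psi^{-1} S \psi$ is the $g_1$-coordinate of the conjugate $G$-action $T'_g := \psi^{-1} T_g \psi$, which is again free. Since $E$ is analytic, it has the Baire property, and the quoted $0$--$1$ law reduces the theorem to showing that $E$ is non-meager in $\Omega$.

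Next, I would apply Lemma 2.4 with $\varphi = \pi_1$ and $A = F$. Since $F$ is non-meager, it is enough to prove that $LocDen\pi_1$ is dense in $\Omega_G$. Example 3.1 together with Lemma 3.2 already supplies one locally dense point, namely a free $G$-action whose $g_1$-coordinate is an ergodic Kronecker automorphism. To obtain density, given any open $U \subset \Omega_G$, I would use Lemma 2.3 to pick a finite transitive $P^* \in U \cap L_{n,k}'^*$ (for $n,k$ large), and then perform the Example 3.1 construction on the tower structure of $P^*$: replace $P^*_{g_1}$ by a shift $x \mapsto x + \alpha^{(1)}$ with rationally independent irrational coordinates on each level $C_j(n)$, and define the remaining generators inductively via the recipe for $\tilde\alpha^{(k+1)}$ from Example 3.1. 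The resulting action is close to $P^*$, is free with $g_1$-coordinate ergodic of discrete spectrum, and the proof of Lemma 3.2 then applies essentially verbatim at this new point, producing a locally dense point of $\pi_1$ inside $U$.

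The main obstacle is precisely this density step. Lemma 3.2 is written for one specific $T$, but its proof uses only the rank-one, discrete-spectrum structure of $T_{g_1}$ together with the algebraic relations built into $\tilde\alpha^{(k)}$, both of which are preserved in the variant construction above. The delicate part is checking that the Rokhlin-tower approximation, the choice of exponents $m_j$ in $(4)$--$(6)$, and the error estimates $(11)$--$(13)$ remain valid uniformly once the base action is relocalized into an arbitrary neighborhood $U$; the finite-action framework of Lemma 2.3 provides the necessary flexibility. Once this is verified, the three steps combine to give $E$ typical, which is exactly the claim of the theorem.
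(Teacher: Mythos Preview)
Your overall architecture --- reduce to non-meagerness via the $0$--$1$ law, then invoke Lemma 2.4 once you know $LocDen\,\pi_1$ is dense in $\Omega_G$ --- is exactly the paper's. The divergence is in how you obtain the density of $LocDen\,\pi_1$, and here you are working much harder than necessary.

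The paper does not rebuild Example 3.1 inside every open set. Instead it observes that $LocDen\,\pi_1$ is invariant under the conjugation action of $\Omega$ on $\Omega_G$: if $T$ is locally dense and $\varphi\in\Omega$, then for any neighborhood $U$ of $\varphi^{-1}T\varphi$ one has $\pi_1(U)=\varphi^{-1}\pi_1(\varphi U\varphi^{-1})\varphi$, which is dense in a neighborhood of $\varphi^{-1}\pi_1(T)\varphi=\pi_1(\varphi^{-1}T\varphi)$. Since the $T$ of Example 3.1 is free and the conjugates of any free $G$-action are dense in $\Omega_G$ for countable amenable $G$, the single locally dense point furnished by Lemma 3.2 already has a dense orbit, and you are done.

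Your alternative --- pick $P^*\in L_{n,k}'^*\cap U$ via Lemma 2.3 and graft the Example 3.1 torus shifts onto the tower levels $C_j(n)$ --- is not obviously wrong, but as written it is a sketch with real verification debt. The action in Example 3.1 lives on an infinite torus, not on $[0,1)$ with a finite interval partition, so ``replace $P^*_{g_1}$ by $x\mapsto x+\alpha^{(1)}$ on each level'' needs an explicit isomorphism of the levels with copies of $(\mathbb{R}/\mathbb{Z})^{\mathbb{N}}$ and a check that the resulting global action is free, has the correct algebraic relations among the $\tilde\alpha^{(k)}$, and still lies in $U$. None of this is needed once you use conjugation invariance.
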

\par
\begin{proof} Fix $G$ and the natural projection $\pi_1:\Omega_G\to\Omega$,
where $\pi_1(T)=T_{g_1}$.
  All free $G$-actions form a dense $G_\delta$ set $F$ in
$\Omega_G$. Therefore the set $B=\pi_1(F)$ is almost open as an
analytic set. Moreover, it is clearly dynamic. By the topological
$0-1$ law, $B$ is then meager or typical.

Besides, It is well know that conjugates to any free action are
dense in $\Omega_G$ for any countable abelian group $G$. Since
$LocDen \pi_1$ is invariant with respect to any conjugate, by Lemma
3.2 $LocDen \pi_1$ is dense in $\Omega_G$.
  From Lemma 2.4 $B$ is then typical and Theorem 3.3 follows.
\end{proof}

\section{Two key subtheorems}

\begin{theorem} Let $T$ be an  action of a countable abelian group $G$
and $H$ be a subgroup of $G$. If $CL\{ T_g: g\in G \}= CL\{ T_h :
h\in H\}$, then $T$ is a locally dense point for the restriction map
$\pi_H :\Omega_G\rightarrow \Omega_H$, where $\pi_H (T)=T|_H$.
\end{theorem}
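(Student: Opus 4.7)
The plan is to imitate the proof of Lemma~3.2, with the closure hypothesis $CL\{T_g:g\in G\} = CL\{T_h:h\in H\}$ replacing the role played there by rank-one/discrete-spectrum of $T_{g_1}$. First I would fix a basic cylindric neighborhood of $T$ and enlarge its controlling list of generators so that it takes the form $U = \{S \in \Omega_G : d(S_{\tilde g_j}, T_{\tilde g_j}) < \gamma,\ j=1,\ldots,r\}$, where $G_k = \bigoplus_{j=1}^r \langle \tilde g_j\rangle$ is a direct-sum decomposition of a finitely generated subgroup into cyclic summands of orders $p_j \in \mathbb{N}\cup\{\infty\}$. It then suffices to produce a neighborhood $V$ of $T|_H$ in $\Omega_H$ such that every $R \in V$ is realized, up to small error, as $\pi_H(S)$ for some $S \in U$.

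Next, invoking the closure hypothesis, I would pick $h_j \in H$ with $d(T_{h_j}, T_{\tilde g_j})$ so small that, via inequality~(1), the approximate relations $d(T_{h_j}^{p_j}, \mathrm{id}) \ll \gamma/r$ (for finite $p_j$) and $d(\prod_j T_{h_j}^{n_j},\, T_{\sum_j n_j \tilde g_j}) \ll \gamma/r$ all hold for the finite list of relations defining $G_k$. I would then take $V$ to control the $R_{h_j}$'s and their relevant powers and products to within an even smaller $\delta$ of the corresponding $T$-values. For $R \in V$ the naive candidate is $S_{\tilde g_j}:=R_{h_j}$ on $G_k$, extended to all of $G$ via Lemma~2.2; by construction $S_{\tilde g_j}$ is close to $T_{\tilde g_j}$, and the chosen $h_j$'s are tautologically sent by $\pi_H(S)$ to $R_{h_j}$.

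The main obstacle is that this $S$ is not yet a homomorphism: the $R_{h_j}$'s satisfy the defining relations of $G_k$ only approximately, so additional work is needed to turn the candidate into a genuine $G$-action. Following the Rokhlin-tower modification in Lemma~3.2, I would refine a partition $\xi_n$ fine enough to encode both $T$ and $R$ within precision $\ll \gamma$, and redefine each $S_{\tilde g_j}$ on a top-floor set of small measure so that $S_{\tilde g_j}^{p_j}=\mathrm{id}$ exactly (for finite $p_j$) and so that all commutation relations hold on the nose. Each correction contributes $O(\gamma/p_j)$ to the $d$-metric, and the accumulated error stays well under $\gamma$; the parallel estimate yields $d(S_{h_j}, R_{h_j}) < \delta'$, hence $\pi_H(S)$ lies in the $\delta'$-neighborhood of $R$. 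The hardest part will be simultaneously bookkeeping the modifications across all cyclic summands while preserving both the $\gamma$-closeness to $T_{\tilde g_j}$ and the $\delta'$-closeness to $R_{h_j}$, since here (unlike Lemma~3.2) the tower structure must be compatible with the whole perturbed $H$-action $R$ rather than with iterates of a single rank-one transformation.
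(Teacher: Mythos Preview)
Your overall shape is right --- use the closure hypothesis to pick $h_j\in H$ with $T_{h_j}$ close to $T_{\tilde g_j}$, then build the extension by a tower correction --- but there is a genuine gap in how you pass from the ``naive candidate'' to an actual $G$-action whose $H$-restriction is close to a prescribed $R$.

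The problem is this: setting $S_{\tilde g_j}:=R_{h_j}$ gives you no control over $S|_H$. For $h=\sum n_j\tilde g_j\in G_k\cap H$ you get $S_h=R_{\sum n_j h_j}$, and there is no reason $\sum n_j h_j$ should equal $h$, so $S_h$ need not be near $R_h$ at all. Your claim that ``the chosen $h_j$'s are tautologically sent by $\pi_H(S)$ to $R_{h_j}$'' is therefore false; after you further perturb $S$ to enforce relations, the $H$-restriction drifts even farther from $R$. What you must actually produce is an \emph{extension} of (something close to) $R$ as an $H$-action, not a $G$-action built from scratch out of the $R_{h_j}$'s.

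The paper handles this by reversing the order of operations. Instead of attacking an arbitrary $R\in V$, it invokes Lemma~2.3: the finite $H_k$-actions $\tilde T'\in L_{n,k}^*$ (acting by interval permutations of a partition $\xi_n$) are dense in $\Omega_H$, so it suffices to extend each such $\tilde T'\in V$ to a $G$-action $\tilde T$ lying in the given neighborhood of $T$, while keeping $\tilde T|_{H_k}=\tilde T'|_{H_k}$ \emph{exactly}. This last requirement is what guarantees $\pi_H(\tilde T)\in V$. The tower structure you need comes for free from $\tilde T'$, not from $T$ or from an arbitrary $R$; this is the analogue of restricting to $T_u\in L(q)$ in Lemma~3.2, and it is the step your plan omits.

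Once you are extending a finite $H_k$-action, the relations you must enforce are not simply $\tilde g_j^{p_j}=e$. You need the finer indices
\[
s_i=\min\{m>0:\ m\tilde g_i\in\langle H_n,\tilde g_1,\ldots,\tilde g_{i-1}\rangle\},
\qquad
s_i\tilde g_i=h(i)+\sum_{j<i}k_j(i)\tilde g_j,
\]
because the extension must be compatible with the already-fixed $H_k$-part. The paper then subdivides each atom of $\xi_n$ into $\prod_i\tilde s_i$ pieces and defines $\tilde T_{\tilde g_i}$ inductively, equal to $\tilde T_{h_i}T'_{\Delta_i}$ off the top $s_i$-floor and corrected there to force $\tilde T_{\tilde g_i}^{s_i}=\tilde T_{h(i)}\tilde T_{\tilde g_1}^{k_1(i)}\cdots\tilde T_{\tilde g_{i-1}}^{k_{i-1}(i)}$ exactly. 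The inductive error estimate (19), with the geometric factor $(3c)^{r-i}$, is the bookkeeping you anticipated; but it only goes through because one is perturbing the \emph{finite} action $\tilde T'$, not a general $R$.
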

\begin{proof} Consider a sequence of positive integers $q_n$, where
$q_n|q_{n+1}, n=1,\ldots,$ and for every positive integer $k$ there
exists $n$ such that $k|q_n$. Let $\xi_n$ be the partition of
$X=[0,1)$ into $q_n$ half-open intervals of equal length. It is
clear that the sequence $\xi_n$ is monotonic and $\xi_n\rightarrow
\varepsilon$ as $n\rightarrow\infty$. From now on $d$ is the metric
in $\Omega$ associated to the sequence $\xi_n$.

Fix some neighborhood of $T$ in $\Omega_G$. It contains a cylindric
(open) set
 \[ N(\gamma',g'_1,\ldots,g'_l) = \{S\in \Omega_G :
d(S_{g'_i},T_{g'_i})< \gamma',\ i=1,\ldots,l \}, (\gamma'>0).
\]

 Choose $k_0$
such that $g'_i\in G_{k_0}$ $i=1,\ldots,l$. Next it is convenient to
represent $G_{k_0}=<g_1,\ldots,g_{k_0}>$ as the direct sum of the
form
\[
G_{k_0}=\bigoplus_{i=1}^{r} <\tilde g_i>,
\]
where $\mathbf{deg}\ {\tilde g_i}=p_i, i=1,...,r$, $p_i$ are primes
or $+\infty$. It is clear that, for some $\gamma>0$
 \[ N(\gamma,{\tilde g_1},\ldots,{\tilde g_r})\subseteq N(\gamma',g'_1,\ldots,g'_l) .
\]
All we need to prove is $\pi_H (N(\gamma,{\tilde g_1},\ldots,{\tilde
g_r}))$ is dense in some neighborhood $U\subseteq\Omega_H$ of
$T|_H$. Let us choose then such $U$, a dense subset of $H$-actions
${\tilde T}$ in $U$, admitting extensions to some $G$-actions
${\tilde T}$ from $N(\gamma,{\tilde g_1},\ldots,{\tilde g_r})$.
Denote $H_n$=$G_n\cap H$,
\[
s_i=\min_{m{\tilde g_i}\in <H_n,{\tilde g_1},\ldots,{\tilde
g_{i-1}}> \atop m>0} m,
\]
or $s_i=+\infty$ if for any $m>0$ $m{\tilde g_i}\notin <H_n,{\tilde
g_1},\ldots,{\tilde g_{i-1}}>, i=1,\ldots, r$. If $s_i\neq+\infty$,
then
\[
s_i{\tilde g_i}=h(i)+\sum_{j<i}k_j(i){\tilde g_j},
\]
for some $h(i)\in H_n$. It is clear, that, in general, $s_i, h(i),
k_j(i)$ depend on $n$, but, in fact, $s_i$ are monotonic for any
$i$, therefore they are independent of $n$ for sufficiently large
$n>N_0$, and then we can fix corresponding $h(i), k_j(i)$ for
$s_i\neq+\infty$. Denote $c=1$ if for any $i$ $s_i=+\infty$. If not,
then $c=\max_{i,j}|k_j(i)|$. Put $\lambda=36(3c)^r$, and $I=\{1\leq
i\leq r: s_i\neq+\infty\}$. There exist $h_1,\ldots,h_r\in H$ such
that
\[
d(T_{h_i},T_{\tilde g_i})<\frac{\gamma}{\lambda(1+rc)},\
i=1,\ldots,r.\eqno (14)
\]
We can choose $k'>\max\{k_0,N_0\}$ such that $h_i\in H_{k'}\leq
H_{k'+1}\leq\ldots,\ i=1,\ldots,r$. Besides we can choose a positive
integer $n_0$ such that
\[
\sum_{n>n_0}^{\infty} \frac{1}{n^2} d_n(T,S)<\frac{\gamma}{\lambda}
\ \eqno (15)
\]
for any $T,S\in \Omega$.

 Let $U$ be the (cylindric) set of all
$H$-actions $S$ with the following properties
\[
d(T_{h_i},S_{h_i})<\frac{\gamma}{\lambda},\ i=1,\ldots,r,\eqno (16)
\]
\[
d(T^{s_i}_{h_i},S^{s_i}_{h_i})<\frac{\gamma}{\lambda},\ \mbox{if }
i\in I,\eqno (17)
\]
\[
d(T_{h(i)}T^{k_1(i)}_{h_1}\ldots
T^{k_{i-1}(i)}_{h_{i-1}},S_{h(i)}S^{k_1(i)}_{h_1}\ldots
S^{k_{i-1}(i)}_{h_{i-1}})<\frac{\gamma}{\lambda},\ \mbox{if } i\in
I.\eqno (18)
\]

By $L_{n,k}$ denote the set of all $H_k$-actions ${\tilde T}$
preserving the partition $\xi_n=\{C_1(n),$ $\ldots,C_{q_n}(n)\}$
such that for any $h, j$ ${\tilde T_h}|_{C_j(n)}$ is a shift.
 Fixing
an extension ${\tilde T'}$ to each ${\tilde T}\in L_{n,k} $, we get
a finite subset $L_{n,k}^*$ of $\Omega_H$. By Lemma 2.3,
\[
\bigcup_{n>n_0}\bigcup_{k>k'} L_{n,k}^*\cap U \mbox{ is dense in }U.
\]
 Let us modify
each ${\tilde T'}\in L_{n,k}^*, (n>n_0, k>k')$ to a $G$-action
${\tilde T}$ in the way ${\tilde T}|_{H_k}={\tilde
T'}|_{H_k}$.\footnote{ Not necessarily ${\tilde T}_h={\tilde T'}_h$
if $h\in H\setminus H_k$.} Then
\[
{\tilde T}|_H\in U\Leftrightarrow {\tilde T'}\in U,
\]
and applying Lemma 2.3 again, we get
\[
\bigcup_{n>n_0}\bigcup_{k>k'}\bigcup_{{\tilde T'}\in
L_{n,k}^*}{\tilde T}|_H\cap U \mbox{ is dense in }U.
\]
If, additionally, because of the construction of ${\tilde T}$,
${\tilde T'}\in L_{n,k}^*\cap U (n>n_0, k>k')$ implies that the
corresponding ${\tilde T}\in N(\gamma,{\tilde g_1},\ldots,{\tilde
g_r})$, then Theorem 4.1 is proved.

 Fix ${\tilde T'}\in L_{n,k}^*  (n>n_0, k>k')$, put ${\tilde T}|_{H_k}={\tilde
T'}|_{H_k} $, i.e. at this step ${\tilde T}$ is an element of $
L_{n,k}$. Extend then the $H_k$-action ${\tilde T}$ to
$<G_{k_0},H_k>$-action ${\tilde T}$ by transformations ${\tilde
T}_{\tilde g_1},\ldots,{\tilde T}_{\tilde g_r}$ defined
consecutively as follows.

Denote ${\tilde s}_i=s_i$ if $i\in I$, ${\tilde s}_i=1$ if
$s_i=+\infty$. Cut every $C_j(n)$ into ${\tilde s}_1\cdots{\tilde
s}_r$ half-open intervals $C_j(j_1,\ldots, j_r)$ of the equal
length, where the collection $(j_1,\ldots, j_r)$ is considered
modulo $({\tilde s}_1,\ldots,{\tilde s}_r)$ and
\[
Q C_m (j_1,\ldots, j_r) = C_t(j_1,\ldots, j_r)
\]
for any $(j_1,\ldots, j_r)$, where Q is the shift defined by
$Q(C_m(n))=C_t(n), m,t=1,\ldots,q_n$. Therefore every ${\tilde
T}_h$, $h\in H_k$, sends  $C_m (j_1,\ldots, j_r)$ to some $C_{m'}
(j_1,\ldots, j_r)$, and $m'$ is independent of coordinates
$(j_1,\ldots, j_r)$. It is clear that if $1<s_i\neq+\infty$, then we
can find transformations $T'_{\Delta_i} $ acting as shifts on each
$C_j(j_1,\ldots, j_r)$ such that
\[
T'_{\Delta_i}C_j(j_1,\ldots, j_r)= C_j (j_1,\ldots,
j_{i-1},j_{i}+1,j_{i+1},\ldots,j_r).
\]
Obviously, $T'_{\Delta_i}$ commute to each other and to any ${\tilde
T}_h$, $h\in H_k$.

Let us finally define ${\tilde T}_{{\tilde g}_i}$ keeping in mind
that ${\tilde T}_{{\tilde g}_1},\ldots,{\tilde T}_{{\tilde
g}_{i-1}}$ are well defined. Put
\[
 {\tilde T}_{{\tilde g}_i}={\tilde T}_{h_i}\  \mbox{if} \ s_i=+\infty,
\]
\[
 {\tilde T}_{{\tilde g}_i}={\tilde T}_{h(i)}{\tilde T}_{{\tilde g}_1}^{k_1(i)}\ldots
 {\tilde T}_{{\tilde g}_{i-1}}^{k_{i-1}(i)}\  \mbox{if} \ s_i=1.
\]
If $1<s_i\neq+\infty$, then for $x\in C_j (j_1,\ldots, j_r)$ let
\[
{\tilde T}_{{\tilde g}_i}x = \left\{
\begin{array}{lc}
{\tilde T}_{h_i}T'_{\Delta_i}x & \mbox{if} \ \ j_i\ne s_i-1\\
{\tilde T}_{h(i)}{\tilde T}_{{\tilde g}_{1}}^{k_1(i)}\ldots
 {\tilde T}_{{\tilde g}_{i-1}}^{k_{i-1}(i)}{\tilde T}_{h_i}^{1-s_i}
 T'_{\Delta_i}x  & \mbox{if} \ \ j_i=
 s_i-1.
\end{array}
 \right.
\]

To see why ${\tilde T}_{{\tilde g}_i}$ is a well-defined
transformation for $1<s_i\neq+\infty$, it is convenient to look at
it as a permutation of the sets $C_j(j_1,\ldots, j_r)$. Indeed, it
is a composition of $T'_{\Delta_i}$ and a map which fixes each
element of the partition $\{B_m(i) , m=0,\ldots,s_i-1\}$, where
\[
B_m(i)=\bigcup_{j,j_1,\ldots,j_r} C_j (j_1,\ldots
,j_{i-1},m,j_{i+1},\ldots,j_{r}).
\]
Moreover, restricted to some level $B_m(i)$, this map is an element
of $<{\tilde T}|_{H_k},$ ${\tilde T}_{{\tilde g}_1},\ldots,{\tilde
T}_{{\tilde g}_{i-1}}>$, so it is a permutation of the sets
$C_j(j_1,\ldots, j_r)$ that makes no change in coordinates
$j_i\ldots,j_r$. Thus ${\tilde T}_{{\tilde g}_i}$ does not change
coordinates $j_{i+1},\ldots,j_r$, and does not depend on them.

Along the same line, it is easy to check, that ${\tilde T}_{{\tilde
g}_i}$ commutes to each element of $<{\tilde T}|_{H_k},{\tilde
T}_{{\tilde g}_1},\ldots,{\tilde T}_{{\tilde g}_{i-1}}>$ provided
the later is an abelian group.

Assuming ${\tilde T}$ is a well-defined  $<H_k,{\tilde
g}_1,\ldots,{\tilde g}_{i-1}>$-action, we get that  ${\tilde T}$ is
 well defined  as an $<H_k,{\tilde g}_1,\ldots,{\tilde g}_{i}>$-action
 too. Indeed, any relation $m{\tilde g}_{i}=h_0\in<H_k,{\tilde
g}_1,\ldots,{\tilde g}_{i-1}>$ $(m\neq 0)$ implies $i\in I$,
$s_i|m$, and ${\tilde T}_{{\tilde g}_i}^m={\tilde T}_{h_0}$, because
 of ${\tilde T}_{{\tilde g}_i}^{s_i}={\tilde T}_{h(i)}{\tilde T}_{{\tilde
g}_{1}}^{k_1(i)}\ldots {\tilde T}_{{\tilde g}_{i-1}}^{k_{i-1}(i)}$.
This means that we extended above ${\tilde T}$ to a finite $<
G_{k_0},H_k>$-action. By Lemma 2.2, we can extend the later to some
$G$-action ${\tilde T}$.

In order to show that ${\tilde T}\in N(\gamma,{\tilde
g_1},\ldots,{\tilde g_r})$ if ${\tilde T}'\in U$, let us prove by
the induction that
\[
d({\tilde T}_{{\tilde g}_{i}}, {\tilde
T}_{h_i})<\frac{\gamma}{3(3c)^{r-i}}, \ i=1,\ldots,r.\eqno (19)
\]

Note that if $s_i=+\infty$, then (19) is trivial. Fix $i\in I$,
$s_i\neq 1$, and some positive integer $n'\leq n_0$. Let $A$ be any
$\xi_{n'}$-measurable set. Then $A$ and ${\tilde T}_{h_i}A$ are
$\xi_n$-measurable. By definition,
\[
\begin{array}{l}
\mu({\tilde T}_{h_i}^{-1}A\triangle {\tilde T}_{{\tilde g}_i}^{-1}A)
=\mu({\tilde T}_{h_i}A\triangle {\tilde T}_{{\tilde g}_i}A) =\\
2\mu({\tilde T}_{h_i}A \backslash {\tilde T}_{{\tilde g}_i}A) =
2\mu(({\tilde T}_{h_i}A \bigcap B_0(i))\backslash
({\tilde T}_{{\tilde g}_j}A\bigcap B_0(i)))=\\
2\mu(({\tilde T}_{h_i}A \bigcap B_0(i))\backslash ({\tilde
T}_{h(i)}{\tilde T}_{{\tilde g}_{1}}^{k_1(i)}\ldots
 {\tilde T}_{{\tilde g}_{i-1}}^{k_{i-1}(i)}{\tilde T}_{h_i}^{1-s_i} A \bigcap
B_0(i)))=\\
\frac{2}{s_i}\mu(A\backslash {\tilde T}_{h(i)}{\tilde T}_{{\tilde
g}_{1}}^{k_1(i)}\ldots
 {\tilde T}_{{\tilde g}_{i-1}}^{k_{i-1}(i)}{\tilde T}_{h_i}^{-s_i}A)=\\
\frac{1}{s_i}\mu({\tilde T}_{h_i}^{s_i}A \triangle {\tilde
T}_{h(i)}{\tilde T}_{{\tilde g}_{1}}^{k_1(i)}\ldots
 {\tilde T}_{{\tilde g}_{i-1}}^{k_{i-1}(i)}A).
\end{array}
\]
Therefore,
\[
d_{n'}({\tilde T}_{{\tilde g}_{i}}, {\tilde T}_{h_i})= \frac{1}{s_i}
d_{n'}({\tilde T}_{h_i}^{s_i},{\tilde T}_{h(i)}{\tilde T}_{{\tilde
g}_{1}}^{k_1(i)}\ldots
 {\tilde T}_{{\tilde g}_{i-1}}^{k_{i-1}(i)})  \mbox{ for }\ n'\le n_0.
\]
Now if we recall (15), we get

\[
\begin{array}{l}
d({\tilde T}_{{\tilde g}_{i}}, {\tilde
T}_{h_i})<\frac{1}{s_i}d({\tilde T}_{h_i}^{s_i},{\tilde
T}_{h(i)}{\tilde T}_{{\tilde g}_{1}}^{k_1(i)}\ldots
 {\tilde T}_{{\tilde g}_{i-1}}^{k_{i-1}(i)})+\frac{\gamma}{\lambda}.
\end{array}\eqno (20)
\]
Remark that (20) is also true for $s_i=1$.
 Taking into account the commutativity, we then have

\[
\begin{array}{l}
d({\tilde T}_{h(i)}{\tilde T}_{{\tilde g}_{1}}^{k_1(i)}\ldots
 {\tilde T}_{{\tilde g}_{i-1}}^{k_{i-1}(i)},{\tilde T}_{h(i)}{\tilde T}
 _{h_{1}}^{k_1(i)}\ldots
 {\tilde T}_{h_{i-1}}^{k_{i-1}(i)})\leq \\\sum_{j<i}|k_j(i)|d({\tilde T}_{{\tilde g}_{j}},
  {\tilde
T}_{h_j})\leq\sum_{j<i}cd({\tilde T}_{{\tilde g}_{j}}, {\tilde
T}_{h_j})<\frac{2\gamma}{9(3c)^{r-i}}.
\end{array}\eqno (21)
\]
Since ${\tilde T}'\in U$, by (17-8),  we get

\[
\begin{array}{l}
d({\tilde T}_{h_i}^{s_i},{\tilde T}_{h(i)}{\tilde
T}_{h_{1}}^{k_1(i)}\ldots {\tilde T}_{h_{i-1}}^{k_{i-1}(i)})< d(
T_{h_i}^{s_i}, T_{h(i)}T_{h_{1}}^{k_1(i)}\ldots
T_{h_{i-1}}^{k_{i-1}(i)})+\frac{\gamma}{18(3c)^{r-i}}.
\end{array}\eqno (22)
\]
Besides,
\[
d( T_{{\tilde g}_i}^{s_i},T_{h(i)} T_{{\tilde g}_{1}}^{k_1(i)}\ldots
 T_{{\tilde g}_{i-1}}^{k_{i-1}(i)})=0.
\]
Therefore, applying (14), we get
\[
\begin{array}{l}
d( T_{h_i}^{s_i}, T_{h(i)}T_{h_{1}}^{k_1(i)}\ldots
T_{h_{i-1}}^{k_{i-1}(i)})\leq s_id(T_{{\tilde
g}_{i}},T_{h_i})+\\\sum_{j<i}|k_j(i)|d( T_{{\tilde g}_{j}},
T_{h_j})<\frac{\gamma s_i}{\lambda}.
\end{array}\eqno (23)
\]
Combining (21-3), we obtain
\[
\frac{1}{s_i}d({\tilde T}_{h_i}^{s_i},{\tilde T}_{h(i)}{\tilde
T}_{{\tilde g}_{1}}^{k_1(i)}\ldots
 {\tilde T}_{{\tilde g}_{i-1}}^{k_{i-1}(i)})<\frac{\gamma
 }{\lambda}+\frac{5\gamma}{18(3c)^{r-i}}<\frac{11\gamma}{36(3c)^{r-i}}.
\]
Because of (20), then (19) is proved.

Keeping in mind (19), (16), (14), we get,
\[
d({\tilde T}_{{\tilde g}_{i}},T_{{\tilde g}_{i}})\leq d({\tilde
T}_{{\tilde g}_{i}}, {\tilde T}_{h_i})+d({\tilde T}_{h_i},
 T_{h_i})+d( T_{h_i},T_{{\tilde g}_{i}})<\frac{\gamma
 }{3}+\frac{\gamma
 }{\lambda}+\frac{\gamma
 }{\lambda}<\gamma
\]
for any $i$, and Theorem 4.1 follows.
\end{proof}
Fix some finite measurable partition $\xi=\{C_1,\ldots,C_m\}$ onto
the sets of equal measure (not necessarily $\cup_iC_i=X$, but the
difference will not be anyhow essential in the sequel).

\begin{definition} We say that a $G$-action $P$ is $H,\xi$-finite for
some $H\leq G$ if for any $g\in H$ $P_g\xi=\xi$, $(\exists i_0)
P_gC_{i_0}=C_{i_0}$ implies $P_g$ is the identity, and $P|_H$ is
transitive on $X/\xi=\{1,\ldots,m\}$.
\end{definition}
It is easy to see that every $H,\xi$-finite action $P$ is
$H$-finite, and $<P_g:g\in H>=$ $\oplus_{i=1}^k<P_{g_i}>$. Denote
$p_i=$\textbf{deg} $P_{g_i}$ in $\Omega, i=1,\ldots,k$,
$p=(p_1,\ldots,p_k)$. Mark the element $C_1$, then a map
$\varphi(P_g)=P_gC_1$ is an isomorphism between
$\oplus_{i=1}^k<P_{g_i}>$ and $X/\xi$. Thus we get a parametrization
of $X/\xi$ by elements $\sum c_ig_i, c_i=0,\ldots,p_i-1,
i=1,\ldots,k$.

\begin{definition} We say that a $G$-action $T$ admits a good
approximation by finite actions if there exists a sequence of
$H_n,\xi_n$-finite actions $P(n)$ satisfying both
\[\xi_n\rightarrow\varepsilon, \ \mbox{and}
\]
\[
\omega_n^2\sum_{g:g=\sum c_ig_i(n), |c_i|\leq
2p_i(n)}\mu(T_gC_1(n)\Delta P_g(n)C_1(n))=o(1),
\]
where $\omega_n=\#\xi_n=\prod_ip_i(n)$. \end{definition}
\begin{remark} If a transformation admits a cyclic approximation by
periodic transformations with speed $o(1/n^3)$ (see notation in
[21], [10]), then the corresponding $\mathbb{Z}$-action admits a
good approximation by finite actions.
\end{remark}

Besides, it is easy to see that any action of any finite group can
not admit a good approximation by finite actions.

The proof of the following theorem has an independent interest,
because it provides us by some constructive information about
approximation sequences to elements of centralizers $C\{T_g:g\in
G\}$ for a typical group action $T$.
\begin{theorem}
If an action $T$ of an infinite countable abelian group $G$ admits a
good approximation by finite actions, then
\[
C\{T_g:g\in G\}=CL\{ T_g: g\in G \},\eqno (24)
\]
 and the set of all  $G$-actions  of any infinite countable abelian group $G$
 admitting a good approximation by finite actions forms a typical set.
\end{theorem}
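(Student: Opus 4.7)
The theorem has two claims: the equality $C\{T_g\}=CL\{T_g\}$ for actions $T$ admitting good approximation, and the typicality of such $T$. I treat them in turn. The inclusion $CL\{T_g\}\subseteq C\{T_g\}$ is automatic since $\Omega$ is a topological group.

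For the reverse inclusion, fix $S\in C\{T_g\}$ and the given approximation sequence $P(n)$. Parametrize the cells of $\xi_n$ by $g^*=\sum c_ig_i(n)$, $0\le c_i<p_i(n)$, so that $C_{g^*}(n)=P_{g^*}(n)C_1(n)$, and set
\[
a_{g^*}(n):=\mu\bigl(SC_1(n)\cap C_{g^*}(n)\bigr),\qquad A_{g,g'}(n):=\mu\bigl(SC_g(n)\cap C_{g'}(n)\bigr).
\]
Let $g^*(n)$ maximize $a_{g^*}(n)$; the target is $d(S,T_{g^*(n)})\to 0$. The main lever is the combination of commutativity $ST_g=T_gS$ with $T_gC_1(n)\approx P_g(n)C_1(n)$: writing $\varepsilon_g=\mu(T_gC_1(n)\Delta P_g(n)C_1(n))$ and using that $P_g(n)$ acts on cells by the shift $g^*\mapsto g^*+g$, one gets the near-shift-invariance
\[
A_{g,g+g^*}(n)=a_{g^*}(n)+O(\varepsilon_g+\varepsilon_{g+g^*}).
\]

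The main technical obstacle is the concentration estimate $\omega_na_{g^*(n)}(n)\to 1$, i.e.\ $SC_1(n)$ becomes essentially one cell. The sets $\{SC_g(n)\cap C_{g+g^*(n)}(n)\}_g$ lie in distinct cells and are pairwise disjoint, with total measure $\omega_n a_{g^*(n)}(n)+O(\sum_g\varepsilon_g)\le 1$; combined with $\sum_{g^*}a_{g^*}=1/\omega_n$ and the doubly-stochastic relations $\sum_gA_{g,g'}=\sum_{g'}A_{g,g'}=1/\omega_n$, the $\omega_n^2$-weighted summability in the definition of good approximation is precisely what forces the near-permutation $\sigma\colon g\mapsto g+g^*(n)$ to carry essentially full mass. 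Once this is established, for any $\xi_{n'}$-measurable $A$ with $n'\le n_0$ fixed (so $A$ is a union of $m\le\omega_n$ level-$n$ cells),
\[
\mu(SA\Delta T_{g^*(n)}A)\le 2m\bigl(1/\omega_n-a_{g^*(n)}(n)\bigr)+o(1)\le 2\bigl(1-\omega_na_{g^*(n)}(n)\bigr)+o(1)\to 0,
\]
so $T_{g^*(n)}\to S$ and $S\in CL\{T_g\}$, proving (24).

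For typicality, rewrite the set of good-approximation actions as
\[
\bigcap_{k\ge 1}\bigcap_{n_0\ge 1}\bigcup_{n\ge n_0}U_{n,k},
\]
where
\[
U_{n,k}:=\Bigl\{T:\exists P\in L_{n,n}^*,\ \omega_n^2\!\!\!\sum_{g=\sum c_ig_i(n),\,|c_i|\le 2p_i(n)}\!\!\mu(T_gC_1(n)\Delta P_gC_1(n))<1/k\Bigr\}.
\]
Because $L_{n,n}^*$ is finite and the defining inequality is strict and continuous in $T$, each $U_{n,k}$ is open, so the whole set is $G_\delta$. Density follows from Lemma~2.3: the union $\bigcup_{n>n_0}\bigcup_{k>k_0}L^*_{n,k}$ is dense in $\Omega_G$, and any such representative $P$, being itself an $H_k,\xi_n$-finite $G$-action (and, via Lemma~2.2, a finite $G$-action), admits matching $H_m,\xi_m$-finite approximations at every refined level $m$ by aligning $\xi_m$ with the orbit structure (here one uses $q_n\mid q_{n+1}$ and that every integer divides some $q_n$); this places $P$ inside the prescribed $G_\delta$. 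Hence the good-approximation set is dense $G_\delta$, and therefore typical.
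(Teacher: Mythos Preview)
Your strategy for (24) is to find a single index $g^*(n)$ such that $\omega_n\,a_{g^*(n)}(n)\to 1$, i.e.\ $SC_1(n)$ becomes almost entirely one cell. Once that holds, your bound $\mu(SA\Delta T_{g^*(n)}A)\le 2(1-\omega_n a_{g^*(n)}(n))+o(1)$ is fine. The problem is that you have not actually established concentration. The approximate circulant structure $A_{g,g+g^*}\approx a_{g^*}$, the doubly-stochastic normalizations, and the disjointness bound $\omega_n\,a_{g^*(n)}\le 1+o(1)$ are all correct, but the sentence ``the $\omega_n^2$-weighted summability \dots\ is precisely what forces the near-permutation $\sigma$ to carry essentially full mass'' is an assertion, not an argument. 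Nothing you wrote excludes a spread-out profile (e.g.\ $a_{g^*}\equiv 1/\omega_n^2$); your inequalities give an \emph{upper} bound on $\omega_n\,a_{g^*(n)}$, never a lower bound approaching $1$. The paper does not attempt this direct concentration. It fixes a test set $A$, expands $SA$ through the pieces $B_g(n)=C_1(n)\cap S^{-1}C_g(n)$ and their pull-backs $E_g(n)=T_{-g}SB_g(n)$, and then invokes Chacon's combinatorial lemma (Lemma~4.6), applied to the $0$--$1$ matrix $x_{g_1g}=a_{g-g_1}(n)$ with weights $b_{g_1}\propto\mu(\tilde E_{g_1}(n))$, to locate a shift $g_0(n)$ with $P_{g_0(n)}A\approx SA$. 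That lemma is exactly the mechanism producing the ``one shift carries full mass'' conclusion; without it, or an explicit replacement of comparable strength, your argument does not close. Note also that in the paper $g_0(n)$ a priori depends on $A$; the passage to a universal sequence $g(n)$ with $T_{g(n)}\to S$ requires an additional step (modified partitions $\xi_n'$ and a diagonal argument) that your sketch skips.

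For typicality, two smaller points. First, Definition~4.2 of $H,\xi$-finite requires $P|_H$ to act \emph{transitively} on $X/\xi$, so your $U_{n,k}$ should quantify over $P\in L_{n,k}'^*$ (the transitive family from Section~2.2), not $L_{n,n}^*$. Second, density is immediate: each $P'\in L_{n,k}'^*$ lies in its own $U(P')$ with zero error, so $\bigcup_{n\ge n_0}\bigcup_k L_{n,k}'^*\subseteq B_{n_0}$, and Lemma~2.3 (for the primed sets) gives density of $B_{n_0}$. Your detour through ``$P$ admits matching $H_m,\xi_m$-finite approximations at every refined level $m$'' is unnecessary and, as stated, unjustified: passing from $\xi_n$ to a refinement $\xi_m$ does not automatically preserve transitivity of $P|_{H_m}$ on $X/\xi_m$.
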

What we really need in the main theorem is just (24) for a typical
group action, and this is well-known for $G=\mathbb{Z}$. Nowadays it
is usually attributed to King (see [23]) exploiting the fact that
rank $1$ transformations are typical. However it was known before
because of a primary proof (see [9]) relying on so called Chacon's
lemma [8]. We do not pretend that the weak-closure theorem was
unknown for any countable abelian group $G$, however we did not find
in the literature any mention about that. We are only aware  of [27]
where it is proved for infinite torsion-free countable abelian
groups. We will follow the primary proof applying the
extension-to-groups technique developed in [1].
\begin{proof} Take a transformation $S\in C\{ T_g: g\in G \}$, assuming $T$
admits a good approximation by  ($H_n$-) finite actions $P(n)$. We
will prove that for any measurable set $A$ and $n$ there exists
$g(n)\in K(n)=\{\sum_ic_ig_i(n): 0\leq c_i< p_i(n)\}$ such that
\[
\lim_{n\rightarrow\infty}\mu(SA\Delta P_{g(n)}(n)A)=0.\eqno (25)
\]
Let us remark first that (25) implies (24). Indeed, for every
$C_j(n)$ we can find $g'=g'(j)\in K(n)$ such that
$C_j(n)=P_{g'}(n)C_1(n)$. Therefore
\[
\mu(T_{g(n)}C_j(n)\Delta P_{g(n)}(n)C_j(n))\leq
\mu(T_{g(n)+g'}C_1(n)\Delta P_{g(n)+g'}(n)C_1(n))+
\]
\[
\mu(T_{g'}C_1(n)\Delta P_{g'}(n)C_1(n)).
\]
From now on we apply that $\rho(A,B)=\mu(A\Delta B)$ is a metric on
the set of all the measurable sets in $X$ if we consider them up to
zero measure. Let $A(\xi_n)$ be a nearest to $A$ $\xi_n$-measurable
set. Obviously,
\[
\rho(\bigcup_jB_j,\bigcup_jD_j)\leq\sum_j\rho(B_j,D_j)
\]
for any collection of measurable sets $B_j, D_j$. Therefore,
\[
\omega_n^2\mu(T_{g(n)}A(\xi_n)\Delta P_{g(n)}(n)A(\xi_n))=o(1),
\]
and
\[
\lim_{n\rightarrow\infty}\mu(SA\Delta T_{g(n)}A)=0.\eqno(26)
\]
In order to get a sequence $g(n)$ that is independent of $A$, we
modify $\xi_n$ to $\xi'_n=\{C'_1(n),\ldots,C'_{\omega_n}(n)\}$,
where $C'_1(n)=C_1(n)\backslash\cup_jT_{g'(j)}^{-1}C_j(n),
C'_j(n)=T_{g'(j)}C'_1(n)$. Choosing a sequence $g(n)$ for $C'_1(k)$,
we get (26) for any $\xi'_k$-measurable set $A$. Since
$\xi'_n\rightarrow \varepsilon$, by the diagonal process, we have a
universal sequence  $g(n)$ and (25) implies (24).

In order to prove (25), denote $C_g(n)=P_g(n)C_1(n)$, $g\in K(n)$.
Then $\xi_n=\{C_g(n):g\in K(n)\}$. We write $A\sim_\varepsilon B$ if
$\rho (A,B)<\varepsilon$,  $1B=B$, and  $0B=\emptyset$ for any set
$B$. Fix $\varepsilon>0$.  For $n$ large enough we have
\[
A\sim_\varepsilon A(\xi_n)=\bigsqcup_{g\in K(n)}a_g(n)C_g(n),
\]
\[
SA\sim_\varepsilon SA(\xi_n)=\bigsqcup_{g\in K(n)}a'_g(n)C_g(n),
\]
where $a_g(n),a'_g(n)\in \{0,1\}$, and
\[
S(A(\xi_n))=\bigsqcup_{g\in K(n)}a_g(n)SC_g(n)\sim_{\delta_1}
\bigcup_{g\in K(n)}a_g(n)T_gSC_1(n),
\]
here $\delta_1\leq\sum_{g\in K(n)}a_g(n)\mu(T_gC_1(n)\Delta
P_g(n)C_1(n))$. Therefore, for $n$ large enough we obtain
\[
SA(\xi_n)\sim_\varepsilon\bigcup_{g\in K(n)}a_g(n)T_gSC_1(n).
\]
Let
\[
M_2(n)=\{x\in C_1(n): T_gx\in P_g(n)C_1(n), g=\sum c_ig_i,
\min_i(2p_i(n)-|c_i|)\geq0\}.
\]
It is clear that
\[
\lim_{n\rightarrow\infty}\omega_n^2\mu(M_2(n)\Delta
C_1(n))=0.\eqno(27)
\]
Denote
\[
B_g(n)=\{x\in C_1(n):Sx\in C_g(n)\},
\]
\[
E_g(n)=T_{-g}SB_g(n), \ D(n)=\sqcup C_g(n).
\]
From now on indexes $g$ in $C_g(n)$ and $a_g(n), a'_g(n)$ will be
considered by "modulo $p(n)=(p_1(n),\ldots)$". Because of (27), for
 $n$ large enough, we have
\[
\max_{g,g_1\in K(n)}\omega_n^2\mu(T_{g_1}C_g(n)\Delta
C_{g_1+g}(n))<\varepsilon, \ \max_{g_1\in
K(n)}\omega_n\mu(T_{g_1}D(n)\Delta D(n))<\varepsilon.
\]
Therefore,
\[
\bigcup_{g\in K(n)}a_g(n)T_{g}(SC_1(n)\backslash
D(n))\sim_\varepsilon
\]
\[
\bigcup_{g\in K(n)}a_g(n)T_{g}SC_1(n)\backslash D(n)\sim_\varepsilon
SA(\xi_n)\backslash D(n)\sim_\varepsilon \emptyset.
\]
\[
SA(\xi_n)\sim_{4\varepsilon}\bigcup_{g\in
K(n)}a_g(n)T_{g}S\bigsqcup_{g_1}B_{g_1}(n)=
\]
\[
\bigcup_{g,g_1\in K(n)}a_g(n)T_{g+g_1}E_{g_1}(n)\sim_{\delta_2(n)}
\bigcup_{g,g_1\in K(n)}a_{g-g_1}(n)T_{g}E_{g_1}(n).\eqno(28)
\]
It is easy to see that
\[
\delta_2(n)<\omega_n^2\max_{g,g'_1}\mu(E_g(n)\Delta T_{g'_1}E_g(n)),
\]
where
 \[
 g'_1\in \{g\in H_n: g=\sum c_ig_i(n), (\forall i)c_i\in
\{0,p_i(n)\}\}.
 \]
 Let
  \[
 x\in \tilde{E}_g(n)=M_2(n)\cap
T_{-g}S(B_g(n)\cap M_2(n))\subseteq E_g(n),
 \]
 and $y=S^{-1}T_gx$. Since $Sy\in T_gM_2(n)$, then $T_{-g'_1}y\in B_g(n)$.
  Hence $T_{-g'_1}x=T_{-g}ST_{-g'_1}y\in E_g(n)$,
 then $\tilde{E}_g(n)\subseteq T_{g'_1}E_g(n)$.
 Since
 \[
 \lim_n\max_gw_n^2\mu(\tilde{E}_g(n)\Delta E_g(n))=0,
 \]
we get
\[
\lim_n\delta_2(n)=0.
\]
Taking into account (28), for $n$ large enough we obtain
\[
SA\sim_\varepsilon\bigsqcup_ga'_g(n)C_g(n)\sim_\varepsilon\bigsqcup_g
T_g(\bigsqcup_{g_1}a_{g-g_1}(n)\tilde{E}_{g_1}(n)).
\]
Since
\[
T_g(\bigsqcup_{g_1}a_{g-g_1}(n)\tilde{E}_{g_1}(n))\subseteq C_g(n),
\]
then for $g\in F(n)=\{g\in K(n):a'_g(n)=1\}$ we get
\[
C_g(n)\sim_{\beta_g}T_g(\bigsqcup_{g_1}a_{g-g_1}(n)\tilde{E}_{g_1}(n))
\sim_{\gamma_g}\bigcup_{g_1}a_{g-g_1}(n)T_gE_{g_1}(n).\eqno(29)
\]
\[
SA\sim_\varepsilon\bigsqcup_{g\in F(n)}C_g(n),\eqno(30)
\]
where
\[
\max\{\sum_{g\in F(n)}\beta_g,\sum_{g\in
F(n)}\gamma_g\}<\varepsilon.
\]
Let us remaind the following combinatorial lemma.
\begin{lemma} (Chacon [8]) Let $||x_{ji}||, j=1,\ldots,k, i=1,\ldots,n$
be a matrix with $0,1$ entries, and $b_j, j=1,\ldots,k$ be
nonnegative numbers, $\sum_jb_j=1$. If there is
$H\subseteq\{1,\ldots,n\}$ such that
\[
\sum_jb_jx_{ji}\geq 1-\eta \ \mbox{for any} \ i\in H,
\]
where $\eta\in (0,1)$, then there exists a $\gamma$
$(1\leq\gamma\leq k)$ such that
\[
\sum_{i\in H}\sum_jb_jx_{ji}x_{\gamma i}\geq \#H(1-2\eta).
\]
\end{lemma}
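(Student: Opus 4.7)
The plan is to prove the lemma by a straightforward double counting/averaging argument; there is no real obstacle, since the squaring $(1-\eta)^2 \ge 1-2\eta$ is exactly what produces the factor on the right.

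First I would introduce, for every $\gamma \in \{1,\ldots,k\}$, the quantity
\[
A_\gamma \;:=\; \sum_{i\in H}\sum_{j} b_j\, x_{ji}\, x_{\gamma i},
\]
which is what we want to bound from below for some~$\gamma$. Since the $b_\gamma$ are nonnegative with $\sum_\gamma b_\gamma = 1$, it suffices to prove the weighted-average lower bound $\sum_\gamma b_\gamma A_\gamma \ge \#H(1-2\eta)$, because then at least one index $\gamma$ must satisfy $A_\gamma \ge \#H(1-2\eta)$.

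The key step is to exchange the order of summation in $\sum_\gamma b_\gamma A_\gamma$. I would write
\[
\sum_{\gamma=1}^{k} b_\gamma A_\gamma \;=\; \sum_{i\in H}\sum_{j} b_j\, x_{ji}\Bigl(\sum_{\gamma} b_\gamma\, x_{\gamma i}\Bigr) \;=\; \sum_{i\in H}\Bigl(\sum_{j} b_j\, x_{ji}\Bigr)^{\!2}.
\]
Here I use only that the rows of the matrix are symmetric in the indices $j$ and $\gamma$ against the same weight vector $(b_j)$. Now, for every $i\in H$, the hypothesis gives $\sum_j b_j x_{ji}\ge 1-\eta$, and elementary estimation gives $(1-\eta)^2 \ge 1-2\eta$. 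Summing over $i\in H$ produces $\sum_\gamma b_\gamma A_\gamma \ge \#H(1-\eta)^2 \ge \#H(1-2\eta)$.

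Finally, I would conclude by the pigeonhole/averaging step: since the left-hand side is a convex combination of the $A_\gamma$'s (with weights summing to~$1$), at least one $\gamma$ satisfies $A_\gamma \ge \#H(1-2\eta)$, which is the desired inequality. The only thing to double-check is the degenerate case $\#H = 0$, which is trivial, and the possibility that the maximizing $\gamma$ has $b_\gamma=0$, which is harmless since the lemma only asserts existence of an index in $\{1,\ldots,k\}$.
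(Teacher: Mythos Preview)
Your proof is correct: the averaging identity $\sum_\gamma b_\gamma A_\gamma = \sum_{i\in H}\bigl(\sum_j b_j x_{ji}\bigr)^2$ together with $(1-\eta)^2\ge 1-2\eta$ is exactly the right mechanism, and the pigeonhole step is handled properly (including the case of zero weights). The paper does not supply its own proof of this lemma---it is quoted from Chacon~[8]---so there is nothing to compare against; your argument is the standard one.
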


Fix some $\eta\in (0,1)$. Let
\[
b_{g_1}=\mu(\tilde{E}_{g_1}(n))/\sum_g\mu(\tilde{E}_{g}(n)),
\]
\[
H(n)=\{g\in F(n): \sum_{g_1}a_{g-g_1}(n)b_{g_1}\geq 1-\eta\},
\]
\[
I(n)=F(n)\backslash H(n).
\]
The sets $\tilde{E}_{g}(n)$ are mutually disjoint, therefore from
(29) and (30) we have
\[
\lim_n \mu(\bigsqcup_{g\in I(n)}C_g(n))=0,
\]
\[
SA\sim_\varepsilon\bigsqcup_{g\in
H(n)}T_g\bigsqcup_{g_1}a_{g-g_1}(n)\tilde{E}_{g_1}(n)\sim_\varepsilon
\bigsqcup_{g\in H(n)}C_g(n).\eqno(31)
\]
Applying Lemma 4.6 for $x_{g_1g}=a_{g-g_1}(n)$, we find a $g_0\in
K(n)$ such that
\[
\sum_{g\in H(n)}\sum_{g_1}b_{g_1}x_{g_1g}x_{g_0g}\geq
\#H(n)(1-2\eta), \ \mbox{i.e.}
\]
\[
\sum_{g\in
H(n)}\sum_{g_1}\mu(T_g\tilde{E}_{g_1}(n))a_{g-g_1}(n)a_{g-g_0}(n)\geq
\#H(n)(1-2\eta)\sum_g\mu(\tilde{E}_{g}(n)). \eqno(32)
\]
Take $\eta=\varepsilon/2$. It is clear that
\[
\bigsqcup_{g\in
H(n)}T_g\bigsqcup_{g_1}a_{g-g_1}(n)\tilde{E}_{g_1}(n)\supseteq
\bigsqcup_{g\in
H(n)}a_{g-g_0}(n)T_g\bigsqcup_{g_1}a_{g-g_1}(n)\tilde{E}_{g_1}(n).
\]
Combining with (31-2), for $n$ large enough we obtain
\[
SA\sim_\varepsilon\bigsqcup_{g\in
H(n)}a_{g-g_0}(n)\bigsqcup_{g_1}a_{g-g_1}(n)T_g\tilde{E}_{g_1}(n).
\]
Combining with (29), we get
\[
SA\sim_{2\varepsilon}\bigsqcup_{g\in H(n)}a_{g-g_0}(n)C_g(n)=
\bigsqcup_{g:g+g_0\in H(n)}a_{g}(n)C_{g+g_0}(n)=
\]
\[
P_{g_0}(n)\bigsqcup_{g:g+g_0\in H(n)}a_{g}(n)C_g(n)\subseteq
P_{g_0}(n)\bigsqcup_ga_{g}(n)C_g(n)=P_{g_0}(n)A(\xi_n).
\]
It implies that for $n$ large enough we have
\[
SA\sim_\varepsilon P_{g_0}(n)A,
\]
and (25) is proved.

In order to prove the final part of Theorem 4.5, pick a sequence of
positive numbers $\varepsilon_n\rightarrow 0$ as $n\rightarrow
+\infty$, and $G_k=<g_1,\ldots,g_k>$, where
$G=<g_1,\ldots,g_n,\ldots>$. Consider the set $L_{n,k}'^*$ as in
Lemma 2.3. Obviously, every $P\in L_{n,k}'^*$ is $G_k,\xi_n$-finite.
Fix some presentation $<P_g:g\in G_k>=$
$\oplus_{i=1}^l<P_{g_i(n)}>$, $p_i(n)=$\textbf{deg} $P_{g_i(n)} ,
i=1,\ldots,l$, $C_1(n)$, and $\omega_n=\prod_ip_i(n)$. Denote
\[
U(P)=\{T\in \Omega_G:\omega_n^2\sum_{g:g=\sum c_ig_i(n), |c_i|\leq
2p_i(n)}\mu(T_gC_1(n)\Delta P_gC_1(n))<\varepsilon_n\}.
\]
\[
B_{n_0}=\bigcup_{n\geq n_0}\bigcup_k\bigcup_{P\in L_{n,k}'^*}U(P), \
B=\bigcap_{n_0}B_{n_0}.
\]
Since
\[
\bigcup_{n\geq n_0}\bigcup_kL_{n,k}'^*\subseteq B_{n_0},
\]
applying Lemma 2.3, we get that every $B_{n_0}$ is a dense open set
in $\Omega_G$. Therefore $B$ is a dense $G_\delta$-set and consists
of $G$-actions admitting a good approximation by finite actions.
Theorem 4.5 is proved.

\end{proof}

\section{Examples}

Take some prime number $p$. Let
$\mathbb{C}_{p^\infty}=\mathbb{Q}_p/\mathbb{Z}$, where
$\mathbb{Q}_p$ stands for the additive group of all rational numbers
of the form $i/p^n$, $i, n\in \mathbb{Z}$. Consider a pair
$(\mathbb{C}_{p^\infty}, G)$, where $\mathbb{C}_{p^\infty}$ is a
fixed subgroup of an abelian countable group $G$.

\begin{example} of a free $G$-action $T$  satisfying

\[ CL\{
T_g: g\in G \}= CL\{ T_h : h\in \mathbb{C}_{p^\infty}\}.
\]
\end{example}
Since $\mathbb{C}_{p^\infty}$ is \textbf{divisible}, i.e. for every
its element $a$ and $n$ there is $b\in \mathbb{C}_{p^\infty}$ such
that $nb=a$, $G$ can be represented as
$G=\mathbb{C}_{p^\infty}\oplus G^{*}$ for some $G^{*}$.  It is well
enough to construct such $G$-action $T$ acting by shifts $Q_x$ on
the infinite-dimensional torus
$X=(\mathbb{R}/\mathbb{Z})^\mathbb{N}$ equipped with the Haar
measure, where $Q_xy=x+y$, $x,y\in X$. Fix some splitting of indexes
$\mathbb{N}=I_1\sqcup I_2$, where $\#I_1=\#I_2=\#\mathbb{N}$. Denote
$x'_i(n)=0$ if $i\in I_1$, and $x'_i(n)=1/p^n$ if $i\in I_2$,
$x'(n)=(x'_1(n),\ldots,x'_k(n),\ldots)\in X, T'_{ig_n}=Q^i_{x'(n)},
g_n=1/p^{n}\in\mathbb{C}_{p^\infty}, n,i\in \mathbb{N}$. Then $T'$
is a well-defined $\mathbb{C}_{p^\infty}$-action. Let us modify $T'$
to a $\mathbb{C}_{p^\infty}$-action $T$ we need. Fix a countable
base of open cylindric sets $U_i=\{x\in X: x_{k_j}\in \Delta_j,
j=1,\ldots, m\}$, here $\Delta_j$ are open intervals in
$\mathbb{R}/\mathbb{Z}$. Let $n(i)=\max_jk_j$. Making no loss in the
generality, we can assume that $n(1)<n(2)<\ldots$. Consider the
homomorphism $P$ of $X$ defined by
\[
P(x_1,\ldots,x_k,\ldots)=(px_1,\ldots,px_k,\ldots).
 \]
It is clear that the set of all the pre-images of $x$, i.e.
$\{P^{-n}x : n\in \mathbb{N}\}$ is dense in $X$ for any $x$.
Therefore there exist $m(1)\in \mathbb{N}, \ \tilde{x}(1)\in U_1$
such that $P^{m(1)}\tilde{x}(1)=0$. Put $x_i(m(1))=\tilde{x}_i(1)$
if $i\leq n(1)$, $x_i(m(1))=x'_i(m(1))$ if $i> n(1)$,
$x(m(1))=(x_1(m(1)),\ldots)$. Then $x(m(1))\in U_1$,
$P^{m(1)}x(m(1))=0$, and $x(m(1))=x'(m(1))$ up to finitely many
coordinates. Step by step, by the same argument we can find a
sequence of $x(m(k))\in X$, $m(k)-m(k-1)\in \mathbb{N}$,
$k=2,3,\ldots$ such that $x(m(k))\in U_k$, $P^{m(k)-m(k-1)}x(m(k))$
$=x(m(k-1))$, and $x(m(k))=x'(m(k))$ up to finitely many
coordinates. Denote $T_{g_{m(k)}}=Q_{x(m(k))}$. Then every shift $Q$
on $X$ is contained in $CL\{ T_{g_{m(k)}}: k\in \mathbb{N} \}$, and
 \[
T^{p^{m(k)-m(k-1)}}_{g_{m(k)}}=T_{g_{m(k-1)}}, \mbox{and} \
T^{p^{m(k)}}_{g_{m(k)}} \mbox{is the identity map}.
\]
This means that $T$ can be extended to a
$\mathbb{C}_{p^\infty}$-action $T$.

Let us finally define $T|_{G^*}$. Take any free $G^*$-action
$\tilde{T}$ acting by shifts on $\times_{i\in
I_1}\mathbb{R}/\mathbb{Z}$. Then $T|_{G^*}$ on $X=\times_{i\in
I_1}\mathbb{R}/\mathbb{Z}\times\times_{i\in
I_2}\mathbb{R}/\mathbb{Z}$ is just the diagonal action
$\tilde{T}\times E$, where $E$ is the $G^*$-action on $\times_{i\in
I_2}\mathbb{R}/\mathbb{Z}$ by the identities.

Obviously, $T|_{G^*}$ consists of shifts on $X$. Therefore $T$
extends to a $G$-action by shifts, and
 \[ CL\{ T_g: g\in G \}=CL\{
T_{g_{m(k)}} : k\in \mathbb{N}\}=CL\{ T_h : h\in
\mathbb{C}_{p^\infty}\}.
\]

Let $T_{g'}y=y$ for some $y\in X, g'\in G$. Then $x(g')=0$. By
definition, if $g=i/p^k\in \mathbb{C}_{p^\infty}$, then
$x_j(g)=i/p^k$ for "almost all" $j\in I_2$, and if $g\in G^*$, then
$x_j(g)=0$ for all $j\in I_2$. Therefore, $g'\in G^*$,
$\tilde{T}_{g'}$ is the identity, $g'=0$, and the freeness of $T$
follows.

Let $H^*=\oplus_{i=1}^{\infty} <h_i>$, where \textbf{deg}
$h_i=p_i\rightarrow +\infty$ as $i\rightarrow+\infty$. Consider a
pair $(H^*, G)$, where $H^*$ is a fixed subgroup of an abelian
countable group $G$.

 \begin{example} of a free $G$-action
$T$ satisfying
\[CL\{ T_g: g\in G \}= CL\{ T_h : h\in H^*\}.
\]
\end{example}
\par
It is clear that we can find a sequence $g_i\in G, i\in \mathbb{N}$
such that $G=<g_i:i\in \mathbb{N}>$ and for some sequence $k_i$
$g_{k_i}\in H^*$,  \textbf{deg} $g_{k_i}\rightarrow +\infty$ as $i
\rightarrow +\infty$, and $G_{k_i}=G_{k_{i-1}}\oplus <g_{k_i}>$,
where $G_k=<g_1,\ldots,g_k>$. Indeed, it follows from the fact that
every abelian finitely generated group has the finite torsion part.

Consider some splitting of the indexes
$\mathbb{N}=\sqcup_{i\in\mathbb{N}} I_i$, where for any $i$
$\#I_i=\#\mathbb{N}$. Let $\{\alpha_i\}, i\in\mathbb{N}$ be any
sequence of irrational numbers. Denote
$(t_1,t_2,\ldots)=(1,1,2,1,2,3,1,\ldots)$.

 Assume that at the step
$n$ a free $G_n$-action $T$ acting by shifts $Q_x$ on
$X=(\mathbb{R}/\mathbb{Z})^\mathbb{N}$ was constructed, where for
all $x(g)=(x_1(g),\ldots), g\in G_n$ $x_i(g)=0$ if
$i\in\cup_{m>n}I_m\backslash J_n$ for some finite $J_n\subset
\mathbb{N}$. Let us define a shift $T_{g_{n+1}}$ as follows. If for
any $i$ $n+1\neq k_i$, then denote
\[
m=\min_{sg_{n+1}\in G_n \atop s>0} s,
\]
or $m=+\infty$ if for any $s>0$ $sg_{n+1}\notin G_n$. If
$m=+\infty$, then put $x_i(g_{n+1})=\alpha_{n+1} (\mbox{mod} 1)$ for
$i\in I_{n+1}$, and $x_i(g_{n+1})=0$ for $i\notin I_{n+1}$. If
$m\neq+\infty$ then $mg_{n+1}=g_0\in G_n$, and we put
$x_i(g_{n+1})=1/m (\mbox{mod} 1)$ for $i\in I_{n+1}$, and
$x_i(g_{n+1})=x_i(g_0)/m$ for $i\notin I_{n+1}$. Finally, if
$n+1=k_i$, then put $x_i(g_{n+1})=1/d $ for $i\in
(I_{n+1}\cap\{j>t_i\})\cup\{t_i\}$, and $x_i(g_{n+1})=0$ otherwise,
here \textbf{deg} $g_{k_i}=d$.

It is easy to see that, in any case, we get a well-defined free
$G_{n+1}$-action satisfying   $x_i(g)=0$ if $g\in G_{n+1}$ and
$i\in\cup_{m>n+1}I_m\backslash J_{n+1}$ for some finite
$J_{n+1}\subset \mathbb{N}$. This means that we defined a free
$G$-action $T$. Besides, by the above construction, for any pair of
positive integers $m,n$ there exists $h(m,n)\in H^*$ such that
\textbf{deg} $h(m,n)=d(m,n)>m$, $x_n(h(m,n))=1/d(m,n)$, and
$x_i(h(m,n))=0$ $i=1,\ldots,n-1$. It follows that every shift is
contained in $CL\{T_h: h\in <h(m,n): m,n\in \mathbb{N}> \}$. Thus,
we get
\[CL\{ T_g: g\in G \}= CL\{ T_h : h\in H^*\}.
\]

Consider a pair $(H^{**}, G)$, where $H^{**}$ is a fixed subgroup of
$G=\oplus_{i=1}^\infty\mathbb{Z}/m\mathbb{Z}$ isomorphic to $G$,
$m\in \mathbb{N}$.

\begin{example} of a free $G$-action $T$ satisfying
\[CL\{ T_g: g\in G \}=
CL\{ T_h : h\in H^{**}\}.
\]
\end{example}
\par
Since the subgroup $H^{**}$ is \textbf{servant}, i.e. any equation
$nx=a$ $a\in H^{**}$, $n\in \mathbb{N}$ that is solvable in $G$ is
solvable in $H^{**}$ as well, $G$ can be represented as
$G=H^{**}\oplus G^{*}$ for some $G^{*}$. Construct such $G$-action
$T$ acting by shifts $Q_x$ on
$X=(\mathbb{R}/\mathbb{Z})^\mathbb{N}$, where $Q_xy=x+y$, $x,y\in
X$. Fix some splitting of indexes $\mathbb{N}=I_1\sqcup I_2$, where
$\#I_1=\#I_2=\#\mathbb{N}$. Take any free $G^*$-action $\tilde{T}$
acting by shifts on $\times_{i\in I_1}\mathbb{R}/\mathbb{Z}$. Then
$T|_{G^*}$ on $X=\times_{i\in
I_1}\mathbb{R}/\mathbb{Z}\times\times_{i\in
I_2}\mathbb{R}/\mathbb{Z}$ is the diagonal action $\tilde{T}\times
E$, where $E$ is the $G^*$-action on $\times_{i\in
I_2}\mathbb{R}/\mathbb{Z}$ by the identities. Denote $x'_i(n)=1/m$
if $i=n$, and $x'_i(n)=0$ if $i\neq n$, $x'(n)=(x'_1(n),\ldots)\in
X$. Pick a map $\varphi: I_2\rightarrow G^*$ such that for any $g\in
G^*$ $\#\varphi^{-1}(g)=\#\mathbb{N}$. We can assume that $H^{**}=
\oplus_{i\in I_2}<h_i>$, where \textbf{deg} $h_i=m, i\in I_2$. Put
\[
T_{h_i}=Q_{x(\varphi(i))+x'(i)}=T_{\varphi(i)}Q_{x'(i)}, i\in I_2.
\]
It is easy to see that we got a well-defined $G$-action. Besides, by
definition, for any $g\in G^*$ there exists $i_n\rightarrow
+\infty$, such that $\varphi(i_n)=g$. Therefore
\[
T_{h_{i_n}}=T_gQ_{x'({i_n})}\rightarrow T_g \ \mbox{as} \
n\rightarrow +\infty,
\]
and $T$ is the $G$-action as it is required.

\section{Proof of the main theorem and applications}

 Every bounded abelian group
  $G$  is uniquely represented as the direct sum of finitely many
  $p_i$-groups $G_{p_i}$, where $p_i$ are mutually different prime
  numbers, and $G_p=\{g\in G: \exists k p^kg=0\}$. Moreover, by the classical
 Pr\"{u}fer theorem every
  such $G_{p_i}$ can be represented as the direct sum of cyclic
  subgroups. Calculating multiplicities of summands of equal order,
  we get a finitely supported function, noted
  \[
  m_G: \{p^k: p \mbox{ is prime and }k>0 \}\rightarrow \mathbb{N}\cup
  \{0, +\infty\},
  \]
  which is independent of our choice of representations $G_{p_i}$
  as the direct sums.

It is clear that two bounded countable abelian groups $G_+$ and
$G_-$ are isomorphic iff $m_{G_+}=m_{G_-}$. This implies in
particular that every bounded infinite countable abelian group is
not cohopfian.

Consider $\overline{m}_G$ defined by
\[
\overline{m}_G(p^k)=\sum_{n\geq k} m_G(p^n), p \mbox{ is prime and }
k>0 .
 \]
 Then $H\leq G$
implies $\overline{m}_H\leq \overline{m}_G$. It follows that two
bounded countable abelian groups $G_+$ and $G_-$ are weakly
isomorphic if and only if $\overline{m}_{G_+}=\overline{m}_{G_-}$.

Let us prove the main theorem.

\begin{proof} The case $H$ is finite is trivial. Namely, the
well-known Bernoulli action  $T$ on, say, $\{0,1\}^G$ equipped with
the Haar measure acts freely by shifts. Thus $T|_H$ is free. However
all free $H$-actions are pairwise isomorphic and form a typical set.
This means that a typical $H$-action can be extended to a free
Bernoulli $G$-action.

Let $H$ contain an isomorphic copy of $\mathbb{Z}$. Consider a free
$G$-action $T$ as in Example 3.1. We have
 \[CL\{ T_g: g\in G \}=
CL\{ T_h : h\in H\}= CL\{ T_h : h\in \mathbb{Z}\},
\]
 because the
centralizer of ergodic transformation with discrete spectrum is just
the closure of its powers. By Theorem 4.1, $T$ is a locally-dense
point for $\pi_H$. It is well known that all the conjugate actions
to any fixed free action are dense for any amenable group $G$. This
means that we have a dense set of locally dense points for $\pi_H$.
By the same argument as in the proof of Theorem 3.3, we have that a
typical $H$-action can be extended to a free $G$-action.

Let $H$ be an infinite torsion group. If there exists a subgroup
$H^*=\oplus_{i=1}^{\infty} <h_i>$, where \textbf{deg}
$h_i\rightarrow +\infty$ as $i\rightarrow+\infty$, then we are done
by the same argument based on Example 5.2. Besides, it is easy to
see that $H$ has such subgroup $H^*$ if and only if for any $m,n$
there exists a finitely generated subgroup, noted $H^0$, admitting
some presentation $H^0=\oplus_{i=1}^{N} <h'_i>$, where
$\#\{i:\mathbf{deg} \ h'_i>m\}>n$. This means that no such subgroup
$H^*$ for $H$ implies that for some positive integer $m$ $mH$ has
finite rank. It is well known that an abelian torsion group has
finite rank if and only if it is a direct sum of finitely many
finite cyclic groups and $\mathbb{C}_{p^\infty}$. If there exists
some $\mathbb{C}_{p^\infty}\leq mH\leq H$, then, applying Example
5.1, we get what we claimed. If not, then there exists $m'$ such
that $\#m'H=1$. In this case $H$ can be represented as
$H=\oplus_{j=1}^{s} H_{p_j}$, where $p_j$ are mutually different
prime numbers and $H_p=\{h\in H: (\exists k>0)\textbf{deg} \
h=p^k\}$. Moreover, $H_{p_j}=\oplus_{i}<h_i(j)> $ for any $j$, where
$\mathbf{deg}\  h_i(j)|m'$ for any $i,j$. Denote
\[
d_j=\max k: \#\{i: \mathbf{deg}\  h_i(j)=k\}=\#\mathbb{N},
\]
and $d_j=1$ if $\#H_{p_j}$ is finite, $j=1,\ldots, s$. Let
$d=\prod_jd_j$ and
\[
H_{p_j}^>=\bigoplus_{i:\mathbf{deg}\  h_i(j) >d_j}<h_i(j)>, \
H_{p_j}^\leq=\bigoplus_{i:\mathbf{deg}\  h_i(j) \leq d_j} <h_i(j)>,
\]
\[
H_{p_j}^d=\bigoplus_{i:\mathbf{deg}\  h_i(j) =d_j}<h_i(j)>.
\]
It is clear that $H^>=\oplus_{j}H^>_{p_j}$ is finite and
$H=H^>\oplus H^\leq$, where $H^\leq=\oplus_{j}H^\leq_{p_j}$.

 Let $G$
be any abelian countable group ($H\leq G$). Assume that every $g\in
G$  admits a representation $g=h+h_1$ for some  $h\in H^> $,
$dh_1=0$. Then $G=\oplus_{j=1}^{s} G_{p_j}$ and $H^>_{p_j}$ is
servant in $G_{p_j}$ for any $j$. Therefore $G$ can be represented
as
\[
G=H^>\oplus G^* \ \mbox{for some countable abelian} \ G^*, \#dG^*=1.
\eqno(33)
\]
It is clear that $H=H^>\oplus H'$ for some subgroup $H'\leq G^*$,
and $H'\cong H/H^>\cong H^\leq$. Fix a $G^{**}=\oplus_{i=1}^\infty <
h'_i>$ such that \textbf{deg} $h'_i=d$, and $G^*\leq G^{**}$. Then
we can find a subgroup, noted  $H^d$, of $H'$, which is isomorphic
to $\oplus_{j}H^d_{p_j}\cong \oplus_{i}^\infty
\mathbb{Z}/d\mathbb{Z}$. By Example 5.3, there exists a free $G^*$
(and $G^{**}$) - action $T'$ such that $CL\{ T'_g: g\in G^* \}= CL\{
T_h : h\in H'\}$. Let $T''$ be any free $H^>$-action on a non-atomic
standard Borel probability space $(Y,\mathcal{F}_1 , \nu)$. Then the
cartesian product $T=T'\times T''$ acting on $(X\times
Y,\mathcal{F}\otimes\mathcal{F}_1,\mu\otimes\nu)$ is a free
$G$-action, and $CL\{ T_g: g\in G \}= CL\{ T_h : h\in H\}$.
Therefore $T$ is a locally dense point for $\pi_H$, and a typical
$H$-action can be extended to a free $G$-action.

 If $G$ is still a countable abelian group, $H\leq G$, and is not as
 in (33), then we fix  $g\in G$ that can not be represented as
 $g=h+h^{*}$, $h\in H^> $, $dh^{*}=0$. Besides, by Theorem 4.5, for a
 typical $H$-action $T$, for any $S\in C\{T_h:h\in H\}$, we can find
 $h_i=h_i^>+h_i^\leq$ such that $T_{h_i}\rightarrow S$ as
 $i\rightarrow \infty$. Taking a subsequence, we have
 $T_{h^\leq_i}\rightarrow R=ST^{-1}_{h^>}$, and $R^d$ is the identity.
  Assuming that there
 exists a $G$-action $T$ extending $H$-action $T$, we obtain that
 $T_{h^\leq_i}\rightarrow R=T_gT_{h^>}^{-1}$ for some $h^>\in H^>,
  h_i^\leq\in H^\leq$, $i=1,\ldots$.
 Therefore $R=T_{h^{*}}$ for $h^{*}=g-h^>$, and $dh^{*}\neq 0$.
 Since $R^d$ is the identity, the $G$-action $T$ is not free. Therefore
 a typical $H$-action can not be extended to a free $G$-action.

 Besides, it is easy to check that a countable abelian group $G$
 ($H\leq G$) satisfies (33) if and only if there exists some
 $G^{**}\geq G$ isomorphic to $H$.
Theorem 1.2 is proved.
\end{proof}
\subsection{Applications}
\begin{definition} Let $X$ and $Y$ be Polish spaces.
We say that a (continuous) map $\varphi : X \rightarrow Y$ is
(second/essential) category preserving if the images and the
pre-images of second category sets are of the second category as
well.
\end{definition}
Obviously, if the map $\varphi$ is ess. category preserving, then
$\varphi(X)$ is a typical set, i.e. $Y\setminus\varphi(X)$ is
meager. Besides, every pre-image (not image!) of a meager set is a
meager set for ess. category preserving maps.

Let us remark that for every countable group $G$ the space
$\Omega_G$ has the $weak$ $Rokhlin$ $property$ , i.e. there is a
$G$-action $T$ such that the set of all $G$-actions that are
isomorphic to $T$ is dense in $\Omega_G$ (see [14], [22]). This
implies that for any amenable subgroup $H$ ($H\leq G$) $\pi_H
(\Omega_G)$ is dense in $\Omega_H$\footnote{This is also true for
every non-amenable $H$ as well, because we can correspond to  every
$H$-action $T$  the so-called $co-induced$ $G$-action
$\widetilde{T}$ satisfying $T$ is a factor (a quotient ) of
$\widetilde{T}|_H$ (see [16], [22]). The density follows from a
well-known fact that every factor of any $G$-action $S$ is in the
closure of conjugations of $S$ if $G$ is countable. } .

 We note that if both $\varphi(X)$ is dense in $Y$
  and $\varphi(B)$ is of the second category for
every second category set $B$, then $\varphi$ is ess. category
preserving. Indeed, in this case, for any countable intersection
$\cap_i B_i$ of open dense sets $B_i$, $\varphi(\cap_i B_i)$ ( as an
analytic set) is an open set, say $Y'$, up to a meager set. It is
easy to see that $Y'$ is dense in $Y$. Therefore $\varphi$ maps any
typical set onto a typical set. Fix some set $C\subseteq Y$ of the
second category. Assume $\varphi^{-1} (C)$ is meager; then
$X\setminus\varphi^{-1} (C)$ is typical in $X$, and
$\varphi(X\setminus\varphi^{-1} (C))$ is typical in $Y$. However
$\varphi(X\setminus\varphi^{-1} (C))\cap C=\emptyset$, and we have a
contradiction. Therefore $\varphi^{-1} (C)$ is of the second
category, and $\varphi$ is ess. category preserving.

 It was noticed  in [2], Lemma 1,  that if the map $\varphi$ has
 a dense set of locally dense points, then  $\varphi( B)$ is not
 meager for every non-meager $B$. This implies that $\varphi$ is
 ess. category preserving. Therefore main theorem
has the following application.

\begin{theorem} Let $G$ be any countable abelian group and $H$
 its subgroup. Suppose $H$ is not an infinite bounded group; then
 the restriction map
$\pi_H :\Omega_G\rightarrow \Omega_H$ is ess. category preserving.
 Suppose $H$ is an infinite bounded group; then $\pi_H$ is ess.
 category preserving
  if and only if $G$ is weakly  isomorphic to $H$.
 \end{theorem}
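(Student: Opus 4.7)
The plan is to derive Theorem 6.2 from Theorem 1.2 together with the locally dense points mechanism developed in Sections 3--5 and the topological $0-1$ law. For the \emph{sufficiency} direction (covering both the case where $H$ is not infinite bounded and the case where $H$ is infinite bounded with $G$ weakly isomorphic to $H$), the construction behind the proof of Theorem 1.2 supplies, via the examples of Sections 3 and 5, a free $G$-action $T$ satisfying $CL\{T_g:g\in G\}=CL\{T_h:h\in H\}$. Theorem 4.1 then places $T$ in $LocDen\,\pi_H$. The centralizer identity is conjugation invariant, conjugation is a homeomorphism of $\Omega_G$, and conjugates of any fixed free $G$-action are dense in $\Omega_G$ (standard for amenable $G$), so $LocDen\,\pi_H$ is dense in $\Omega_G$. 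Lemma 2.4 now gives the non-meagerness clause of the criterion stated just before the theorem, while density of $\pi_H(\Omega_G)$ in $\Omega_H$ follows from the weak Rokhlin property (applicable since $H$ is amenable). The criterion then yields that $\pi_H$ is essential category preserving.

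For the \emph{necessity} direction when $H$ is infinite bounded, I would argue by contradiction. Assume $\pi_H$ is essential category preserving but $G$ is not weakly isomorphic to $H$. Let $F\subseteq\Omega_G$ be the dense $G_\delta$ of free $G$-actions; then $F$ is non-meager, so by assumption $\pi_H(F)$ is non-meager in $\Omega_H$. As a continuous image of a Borel set, $\pi_H(F)$ is analytic, hence almost open. It is also dynamic: for any measure-preserving isomorphism $\varphi$ one has $\varphi^{-1}\pi_H(F)\varphi=\pi_H(\varphi^{-1}F\varphi)=\pi_H(F)$, because freeness of a $G$-action is a conjugation invariant and $\pi_H$ commutes with conjugation. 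The topological $0-1$ law thus forces $\pi_H(F)$ to be typical in $\Omega_H$, meaning that a typical $H$-action extends to a free $G$-action. Theorem 1.2, applied in the infinite bounded case, then forces $G$ to be weakly isomorphic to $H$, a contradiction.

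Almost all of the work is thus absorbed by Theorem 1.2, which in turn rests on the two non-trivial pillars of the paper: the explicit locally dense points supplied by Theorem 4.1 combined with the examples of Section 5, and the group version of the weak-closure theorem (Theorem 4.5) used to exclude free extendability when $G$ is not weakly isomorphic to $H$. Given those, Theorem 6.2 is essentially a categorical repackaging. The one delicate point to verify carefully is that $\pi_H(F)$ is genuinely dynamic, so that the topological $0-1$ law is applicable; this is the only place where the interaction between the restriction map $\pi_H$ and the conjugation action of $\Omega$ must be checked, but it is immediate from commutativity of $\pi_H$ with conjugation.
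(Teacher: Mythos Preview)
Your sufficiency argument has a genuine gap in the case where $H$ is finite (which falls under ``$H$ is not an infinite bounded group''). You assert that the proof of Theorem~1.2 supplies, via the examples of Sections~3 and~5, a free $G$-action $T$ with $CL\{T_g:g\in G\}=CL\{T_h:h\in H\}$. But for finite $H$ and infinite $G$ this identity is impossible: the right-hand side is a finite set, so every $T_g$ would lie in a finite group, forcing $T$ to be non-free. Indeed, the proof of Theorem~1.2 does \emph{not} use locally dense points for finite $H$; it handles that case by the Bernoulli action and the fact that all free $H$-actions are mutually isomorphic. Consequently your locally dense points mechanism simply does not cover finite $H$, and the criterion preceding the theorem cannot be invoked there.

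The paper closes this gap with a separate argument: assuming $\pi_H$ fails to be essentially category preserving, one finds a non-empty open $B\subseteq\Omega_G$ with $\pi_H(B)$ meager, saturates $B$ under a countable dense set of conjugations to obtain a conjugation-invariant set $D$ with $\pi_H(D)$ still meager, and then observes that $D$ must contain the full conjugacy class of the Bernoulli $G$-action, whose image under $\pi_H$ is the (non-meager) set of all free $H$-actions---a contradiction. Apart from this missing finite-$H$ case, your argument for infinite $H$ (both unbounded, and bounded with $G$ weakly isomorphic to $H$) and your necessity argument coincide with the paper's, the latter being simply the contrapositive of what the paper writes.
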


\begin{proof} Indeed, in fact, we proved that the set of locally dense points
is dense for $\pi_H$ and for any pair $(H, G)$ except the case when
$H$ is a bounded subgroup and $G$ is not weakly isomorphic to $H$.
Besides, in remaining, by the main theorem, $\pi_H (F)$ is meager
for every infinite $H$, where $F$ stands for the set of all free
$G$-actions. Therefore $\pi_H$ is not ess. category preserving. And
finally, let $H$ be a finite subgroup of $G$. Assume $\pi_H$ is not
ess. category preserving. Then $\pi_H(A)$ is meager for some
non-meager $A$. It clearly implies that $\pi_H(B)$ is meager for
some non-empty open $B$. Fix  a countable dense set, say $C$, in
$\Omega$. Let $D$ be the union of all $\varphi^{-1}B\varphi$,
$\varphi \in C$. Then $\pi_H(D)$ is meager. Besides, the set, say
$E$, of all $G$-actions that are isomorphic to the Bernoulli
$G$-action $T$ is dense in $\Omega_G$ because $T$ is free. It is
easy to check both $E\subset D$ and $\pi_H(E)$ being  the set of all
free $H$-actions can not be meager. We then have a contradiction and
Theorem 6.2 is proved.
\end{proof}
 \begin{remark} According to [33], a continuous map respects the
genericity if the images and the pre-images of  typical sets are
typical. Relying on the above discussion, it is easy to see that a
continuous map is ess. category preserving if and only if it
respects the genericity.
\end{remark}

The reader can find a bit different look at relations between
alternative conceptions of category preserving maps in [26], [33].

\begin{remark} Let us illustrate how Theorem 6.2 works on a simple
example of a dynamic property to be "not isomorphic to its inverse".
It is well known (see [1]) that a typical $\mathbb{Z}^d$-action is
not isomorphic to its inverse. Let $G$ be any non-torsion countable
abelian group. Take some $g\in G$ of infinite order. By Theorem 6.2
the map $\pi_{<g>}$ is ess. category preserving. Therefore the set
of not isomorphic to its inverse $G$-actions is a typical
set.\footnote{In fact, applying the technique developed in [1], we
can show that a typical $G$-action is not isomorphic to its inverse
for any countable group $G$ except obvious cases when $G$ is finite
or $G=\oplus^\infty\mathbb{Z}/2\mathbb{Z}$. However, the proof is ,
of course, much more complicated.}
\end{remark}

In the sequel, we keep the standard notation $\widehat{G}$ for the
group of characters of $G$, and \textbf{Ann} $H$ for a subgroup
$\{\chi\in \widehat{G}: (\forall h\in H )\chi (h)=0\}$, where $H$ is
a subset of an abelian group $G$.
\begin{theorem} Let $H$ be any infinite subgroup of a countable
 abelian group $G$.
Then the following assertions are equivalent each other:
\begin{description}
 \item[(i)] For a typical $H$-action there is an extension to a free
$G$-action.
  \item [(ii)]  There is a dense subgroup $G_1\leq \widehat{G}$ such
   that $\# G_1\cap \mbox {\textbf{Ann} } H=1$.
  \item [(iii)] There is a free (ergodic) $G$-action $T$ with discrete
  spectrum such that   $T|_H$ is ergodic.
\end{description}
\end{theorem}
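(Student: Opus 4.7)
The plan is to close a cycle $(\mathrm{ii})\Leftrightarrow(\mathrm{iii})\Rightarrow(\mathrm{i})\Rightarrow(\mathrm{iii})$. The first equivalence is pure Pontryagin duality; the second implication is Theorem 4.1 together with the locally-dense-points machinery already set up; and the third is extracted from the case-by-case constructions used in the proof of the main theorem.

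For $(\mathrm{ii})\Leftrightarrow(\mathrm{iii})$, I would invoke the Halmos--von Neumann picture of discrete-spectrum actions. Given a subgroup $G_1\leq\widehat{G}$, put $K=\widehat{G_1}$ (dualizing $G_1$ with the discrete topology), let $\sigma:G\to K$ be the canonical homomorphism $\sigma(g)(\chi)=\chi(g)$, and let $T_g$ be translation by $\sigma(g)$ on $K$ equipped with Haar measure. Then $T$ has discrete spectrum with eigenvalue group exactly $G_1\subseteq\widehat{G}$, and the standard dictionary gives: $T$ is free iff $\sigma$ is injective iff $G_1$ separates points of $G$ iff $G_1$ is dense in $\widehat{G}$; and $T|_H$ is ergodic iff $\sigma(H)$ is dense in $K$ iff no non-zero character in $G_1$ kills $H$, i.e.\ $G_1\cap\mathrm{Ann}\,H=\{0\}$. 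Conversely every free ergodic discrete-spectrum $G$-action arises this way with $G_1=\widehat{K}$, giving the equivalence.

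For $(\mathrm{iii})\Rightarrow(\mathrm{i})$: in the translation picture above the weak closure in $\Omega$ of $\{T_g:g\in G\}$ is $\{Q_k:k\in K\}$ by ergodicity of $T$, and likewise $CL\{T_h:h\in H\}=\{Q_k:k\in K\}$ by ergodicity of $T|_H$. Hence $CL\{T_g:g\in G\}=CL\{T_h:h\in H\}$, so Theorem 4.1 tells us $T$ is a locally dense point for $\pi_H$. Conjugation by elements of $\Omega$ preserves both freeness and the property of being a locally dense point for $\pi_H$, and the conjugacy class of any free $G$-action is dense in $\Omega_G$; hence the set of locally dense free actions for $\pi_H$ is dense in $\Omega_G$. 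By Lemma 2.4 the analytic set $\pi_H(F)$ is non-meager in $\Omega_H$, and being a dynamic almost-open set it is typical by the topological $0{-}1$ law, which is exactly~(i).

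For $(\mathrm{i})\Rightarrow(\mathrm{iii})$ I would run through the cases of the main theorem. In every case where (i) holds, the action exhibited in the proof of Theorem 1.2 is in fact a translation on a compact abelian group, and satisfies (iii) directly: if $H$ has an element of infinite order, use Example 3.1; if $H$ is infinite torsion containing a subgroup $H^{*}=\bigoplus_{i}\langle h_i\rangle$ with $\mathbf{deg}\,h_i\to\infty$, use Example 5.2; if $H$ is infinite torsion and some $mH$ has finite rank, so contains a copy of $\mathbb{C}_{p^\infty}$, use Example 5.1; and if $H$ is infinite bounded with $G$ weakly isomorphic to $H$ (equivalently, the splitting (33) holds), use Example 5.3. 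In every case the action is a free shift on a compact abelian group, hence automatically has discrete spectrum, and the identity $CL\{T_g\}=CL\{T_h\}$ already established in the corresponding example is precisely ergodicity of $T|_H$. The main obstacle is the infinite-bounded case, where one must first invoke the structural splitting~(33) to represent $G$ as $H^{>}\oplus G^{*}$ and embed $G^{*}$ inside an $H$-isomorphic ambient $G^{**}$ before applying Example 5.3.
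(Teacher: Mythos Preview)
Your overall architecture matches the paper's own proof: it, too, runs the cycle via Halmos--von Neumann for $(\mathrm{ii})\Leftrightarrow(\mathrm{iii})$, uses Theorem~4.1 and the locally-dense-points argument for $(\mathrm{iii})\Rightarrow(\mathrm{i})$, and extracts $(\mathrm{i})\Rightarrow(\mathrm{iii})$ from the explicit torus constructions in the proof of the main theorem. Two points in your sketch require repair, however.

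First, in $(\mathrm{ii})\Rightarrow(\mathrm{iii})$ nothing in (ii) forces $G_1$ to be countable, so $K=\widehat{G_1}$ need not be second countable and the translation action is not on a standard probability space. The paper handles this by first replacing $G_1$ with an infinite \emph{countable} dense subgroup $G_1^{*}\leq G_1$; this is harmless since $\widehat{G}$ is compact metrizable and $G_1^{*}\cap\mathrm{Ann}\,H\subseteq G_1\cap\mathrm{Ann}\,H=\{0\}$.

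Second, and more substantively, your $(\mathrm{i})\Rightarrow(\mathrm{iii})$ fails as written in the infinite bounded case. The action built in the main-theorem proof there is $T=T'\times T''$ with $T''$ an \emph{arbitrary} free $H^{>}$-action, so it is not a translation unless $T''$ is chosen to be one. More to the point, even the translation action of Example~5.3 on $(\mathbb{R}/\mathbb{Z})^{\mathbb{N}}$ is \emph{not} ergodic: every element of $G$ has order dividing $m$, so all shift vectors lie in $(\tfrac{1}{m}\mathbb{Z}/\mathbb{Z})^{\mathbb{N}}$, a proper closed subgroup. Thus $T|_H$ is not ergodic on the whole space, and the identity $CL\{T_g\}=CL\{T_h\}$ does \emph{not} by itself imply ergodicity of $T|_H$ when $T$ is non-ergodic. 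The paper's fix is exactly this: since the $H$-shifts are dense among the $G$-shifts, every $T|_H$-ergodic component is already $G$-invariant; restricting $T$ to one such component yields the free ergodic discrete-spectrum $G$-action with ergodic $H$-restriction required by~(iii).
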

Let me leave an ergodic-theoretical proof of Theorem 6.5.
\begin{proof} Suppose first that $H$ is unbounded. Following the
proof of the main theorem, we construct  a free $G$-action $T$
acting by shifts on the infinite-dimensional torus $X$. Moreover,
the set of shifts corresponding to the $H$-subaction forms a dense
set among all the shifts on $X$. It gives (iii). Let $G_1$ be the
discrete part $\Lambda (T)$ of the spectrum  of $T$, i.e. the set of
all eigenvalues of $G$-action $T$. Then $G_1$ is a countable
subgroup of $\widehat{G}$. The freeness of $T$ implies the density
of $G_1$. Besides, if $\lambda \in G_1\cap \mbox {\textbf{Ann} } H$,
then the corresponding eigenfunction $f_\lambda$ is invariant for
$T|_H$. Because of the ergodicity, $f_\lambda$ is a constant
function, and $\lambda$ is the trivial character. It gives (ii).

We can apply the same argument for any infinite bounded $H$ if it is
weakly isomorphic to $G$. The only difference is the set of shifts
corresponding to the $H$-subaction forms a dense set among all the
shifts $T_g, g \in G$ on the infinite-dimensional torus $X$.
Restricting ourselves to an ergodic component for $T|_H$, we get a
$T|_G$-invariant closed subset of $X$, which is just the closure of
a $T|_H$-orbit. Thus we have a free action as it is needed in (iii).

Therefore we get the implications (i)$\Rightarrow$ (iii)
$\Rightarrow$ (ii).

Conversely, since $H$ is infinite, $G_1$ is infinite. Fix an
infinite countable dense subgroup $G_1^{*}\leq G_1$. Consider an
ergodic $G$-action $T$ with discrete spectrum, where $\Lambda
(T)=G_1^{*}$. The density of $G_1^{*}$ implies the freeness of $T$.
Obviously, $\# G_1^{*}\cap \mbox {\textbf{Ann} } H=1$ implies $T|_H$
is an ergodic action with discrete spectrum. Applying the standard
realization of every ergodic $G$-action $S$ with discrete spectrum
as a $G$-action by shifts on the character group $\widehat{\Lambda
(S)}$ of the discrete part $\Lambda (S)\leq \widehat{G}$, it is easy
to get a well-known fact that every transformation that commutes
with $S$ is just a shift on the abelian group $\widehat{\Lambda
(S)}$. Therefore $CL\{ T_g: g\in G \}= CL\{ T_h : h\in H\}$. By
Theorem 4.1, $T$ is a locally-dense point for $\pi_H$. This means
that we have a dense set of locally dense points for $\pi_H$. By the
same argument as in the proof of Theorem 3.3, we have that a typical
$H$-action can be extended to a free $G$-action. Therefore we proved
that (ii)$\Rightarrow$ (iii) $\Rightarrow$ (i).
\end{proof}

\begin{remark} $Apriori$ Theorem 6.2 is not useful if $\pi_H$ is  not
ess. category preserving, because the topological status of
$\pi_H(B)$ can be essentially different for different typical sets
$B$. However the main theorem can be applied implicitly in this case
as well (see Remark 6.11).
\end{remark}
In order to proceed to that, let us describe all the pairs $H\leq G$
satisfying $\pi_H$ is $extremely$ non ess. category preserving, i.e.
$\pi_H(B)$ is meager for every typical set $B$. By the $0-1$ law, it
is well enough to say when $\pi_H(\Omega_G)$ is a typical set.

Notice, first, that for every subgroup $G^{*}\leq G$ the space
$\Omega_{G/G^{*}}$ can be viewed as a closed subset of $\Omega_G$
defined by the embedding $\varphi^{*}:\Omega_{G/G^{*}}\rightarrow
\Omega_G$, where $\varphi^{*}(T)_g=T_{\varphi(g)}, g\in G, T\in
\Omega_{G/G^{*}}$, $\varphi$ is the canonical homomorphism of $G$
onto $G/G^{*}$.

\begin{theorem} Let $G$ be any countable abelian group and $H$
 its subgroup. Suppose $H$ is not an infinite bounded group; then
$\pi_H (\Omega_G)$ is a typical set.
 Suppose $H$ is an infinite bounded group; then $\pi_H(\Omega_G)$ is
 a typical set if and only if there is a subgroup $G^{*}\leq G$ such
  that both $\# G^{*}\cap H=1$ and
 $G/G^{*}$ is weakly  isomorphic to $H$. Moreover, if $H$ is
  an infinite bounded group and there is such a subgroup $G^{*}$,
  then $\pi_H|_{\varphi^{*}(\Omega_{G/G^{*}})}$ is ess. category
  preserving.

 \end{theorem}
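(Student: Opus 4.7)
The plan is to dispatch the case when $H$ is not infinite bounded immediately and then, for the infinite bounded case, handle the \textbf{if} direction together with the \emph{moreover} clause via Theorems 1.2 and 6.2, and finally establish the \textbf{only if} direction by a Baire--category argument that decomposes $\pi_H(\Omega_G)\cap F_H$ into a countable union indexed by candidate kernels $G^*$. If $H$ is not infinite bounded, the claim is immediate: by Theorem 1.2 a typical $H$-action extends to a free $G$-action, so $\pi_H(F)\subseteq\pi_H(\Omega_G)$ is already typical.

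So suppose $H$ is infinite bounded. For the \textbf{if} direction and the \emph{moreover} clause, let $G^*\leq G$ satisfy $\#G^*\cap H=1$ and suppose $G/G^*$ is weakly isomorphic to $H$. Triviality of $G^*\cap H$ makes the composition $H\hookrightarrow G\xrightarrow{\varphi}G/G^*$ injective, so I identify $H$ with its image $\varphi(H)\leq G/G^*$. Under this identification, and because $\varphi^*:\Omega_{G/G^*}\to\Omega_G$ is a closed topological embedding, the map $\pi_H|_{\varphi^*(\Omega_{G/G^*})}$ becomes (via $\varphi^*$) the restriction map $\pi_{\varphi(H)}:\Omega_{G/G^*}\to\Omega_H$ for the pair $(\varphi(H),G/G^*)$, which are weakly isomorphic by hypothesis. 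Theorem 6.2 now yields the \emph{moreover} clause, and in particular $\pi_H(\varphi^*(\Omega_{G/G^*}))\subseteq\pi_H(\Omega_G)$ is typical.

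The main content is the \textbf{only if} direction. The key observation is that for any $\tilde T\in\Omega_G$ extending a free $H$-action $T=\tilde T|_H$, the kernel $G^*=\ker\tilde T\leq G$ automatically satisfies $\#G^*\cap H=1$, and $\tilde T$ factors as $\varphi^*(\tilde T')$ for a free $G/G^*$-action $\tilde T'$. Letting $F_H$ and $F_{G/G^*}$ denote the sets of free actions, this yields
\[
\pi_H(\Omega_G)\cap F_H \;=\; \bigcup_{G^*\leq G,\ \#G^*\cap H=1} \pi_H\bigl(\varphi^*(F_{G/G^*})\bigr),
\]
a \emph{countable} union (as $G$ is countable) of analytic (hence almost open) dynamic sets, so by the topological $0$--$1$ law each summand is either meager or typical. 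Assuming $\pi_H(\Omega_G)$ is typical, the left-hand side is typical (since $F_H$ is), so by Baire category at least one summand must be typical; for this $G^*$, a typical $H$-action extends to a \emph{free} $G/G^*$-action, and Theorem 1.2 applied to $(\varphi(H),G/G^*)$ --- with $\varphi(H)\cong H$ infinite bounded --- forces $G/G^*$ to be weakly isomorphic to $H$. The delicate point is verifying the dynamic and Baire status of each $\pi_H(\varphi^*(F_{G/G^*}))$ so that the $0$--$1$ law applies separately to every summand; once this is in hand, the conclusion reduces cleanly to Theorems 1.2 and 6.2.
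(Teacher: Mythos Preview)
Your treatment of the case when $H$ is not infinite bounded, and of the \textbf{if} direction together with the \emph{moreover} clause, is essentially the paper's own argument.

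The \textbf{only if} direction, however, has a genuine gap. You write the decomposition
\[
\pi_H(\Omega_G)\cap F_H \;=\; \bigcup_{G^*\leq G,\ \#G^*\cap H=1} \pi_H\bigl(\varphi^*(F_{G/G^*})\bigr)
\]
and then assert this is ``a \emph{countable} union (as $G$ is countable)''. This is false: a countable abelian group can have continuum many subgroups (take $G=(\mathbb{Z}/2\mathbb{Z})^{\oplus\mathbb{N}}$), and in the present setting the index set can indeed be uncountable---the paper itself notes in Remark 6.11 that the set $D$ of relevant $G^*$ ``may have continuum many elements''. With an uncountable union of meager sets one can certainly cover a typical set, so the Baire argument collapses and you cannot conclude that some summand is typical.

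The paper avoids this by not decomposing at all. Instead it invokes Theorem 4.5 (the group weak-closure theorem): for a typical free $H$-action $T$, \emph{any} $G$-extension $T'$ must satisfy $\varphi_{T'}(dG)=\varphi_{T'}(dH)$, where $d$ is the exponent coming from the structure of $H$ as $H^{>}\oplus H^{\leq}$. Fixing one such $T'$ and setting $G^*=\ker\varphi_{T'}$, one has $\#G^*\cap H=1$ (since $T'|_H$ is free), and the relation $d\varphi_{T'}(G)=d\varphi_{T'}(H)$ forces $\varphi_{T'}(G)\cong G/G^*$ to be weakly isomorphic to $\varphi_{T'}(H)\cong H$. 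So the required $G^*$ is produced directly from a single well-chosen extension, with no appeal to countability of the family of candidate kernels.
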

\begin{proof} The first part follows from the main theorem. Let
$H$ be an infinite bounded group. "$\Leftarrow$" and the last part
of the theorem is almost obvious. Indeed, $\varphi (H)$ is
isomorphic to $H$ via $\varphi$. Therefore
\[
\pi_H(\varphi^{*}(\Omega_{G/G^{*}}))=\pi_{\varphi
(H)}(\Omega_{G/G^{*}})
 \]
if we naturally identify $\Omega_H$ and $\Omega_{\varphi (H)}$. It
remains to apply Theorem 6.2 for the map $\pi_{\varphi (H)}$.

In order to prove "$\Rightarrow$", we keep the proof of the main
theorem notation. It is easy to see that by Theorem 4.5,  for a
typical $T\in \Omega_H$, if there is a $T$ extending $G$-action $T$
then
\[
\varphi_T(dG)= \varphi_T(dH^>)=\varphi_T(dH),
\]
where $\varphi_T$ is the homomorphism defined by $\varphi_T(g)=T_g,
g\in G$. Since $\pi_H(\Omega_G)$ restricted to the set of free $H$-
actions is a typical set as well, we can choose a free $H$-action
$T'$ admitting an extension to a $G$-action $T'$ such that
$\varphi_{T'}(dG)=\varphi_{T'}(dH)$. Denote $G^*=$ \textbf{Ker}
$\varphi_{T'}$. Obviously, $\# G^{*}\cap H=1$. Thanks to the
homomorphism theorem, the group $G/G^{*}$ is isomorphic to
$\varphi_{T'}(G)$. Besides, $H$ is isomorphic to $\varphi_{T'}(H)$.
It remains to prove that $\varphi_{T'}(G)$ is weakly isomorphic to
$\varphi_{T'}(H)$. However,
 \[
 d\varphi_{T'}(G)=\varphi_{T'}(dG)=\varphi_{T'}(dH)=d\varphi_{T'}(H).
 \]
This completes the proof of Theorem 6.7.
\end{proof}

A more explicit description of all pairs $H\leq G$ with
$\pi_H(\Omega_G)$ is a typical set comes from the following theorem.
\begin{theorem} Let $G$ be any countable abelian group, $H$
 its infinite bounded subgroup. Then $\pi_H(\Omega_G)$ is
 a typical set if and only if $G$ can be represented
 as $ H^{'}\oplus G_1$, where $H\leq H^{'}\leq G$,
  and $H^{'}$ is weakly  isomorphic to $H$.

 \end{theorem}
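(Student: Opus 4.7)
The plan is to prove the stated equivalence by reducing it via Theorem 6.7 to a purely group-theoretic correspondence between the existence of an appropriate subgroup $G^{*}$ and the required direct sum decomposition of $G$.

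For the easy direction $(\Leftarrow)$, given a decomposition $G=H'\oplus G_1$ with $H\leq H'$ and $H'$ weakly isomorphic to $H$, I would simply set $G^{*}:=G_1$. Then immediately $G^{*}\cap H\subseteq G_1\cap H'=0$ and $G/G^{*}\cong H'$ is weakly isomorphic to $H$, so Theorem 6.7 yields that $\pi_H(\Omega_G)$ is typical.

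For the harder direction $(\Rightarrow)$, I would start from a $G^{*}$ provided by Theorem 6.7. Writing $m=\exp H$, the hypothesis gives $mG\leq G^{*}$ and the equality of invariants $\overline{m}_{G/G^{*}}=\overline{m}_H$. By Zorn's lemma, choose $G_1\leq G$ maximal subject to $G_1\supseteq G^{*}$ and $G_1\cap H=0$. The quotient $G/G_1$ is bounded (being a quotient of the bounded $G/G^{*}$), and the injection $H\hookrightarrow G/G_1$ coming from $G_1\cap H=0$ squeezes its invariants between $\overline{m}_H$ and $\overline{m}_{G/G^{*}}=\overline{m}_H$, forcing $\overline{m}_{G/G_1}=\overline{m}_H$ and hence weak isomorphism $G/G_1\sim H$. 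The central structural step is then to construct a group-theoretic section $\sigma:G/G_1\to G$ of the quotient map whose image contains $H$; its image $H':=\sigma(G/G_1)$ then satisfies $H\leq H'$, $H'\cap G_1=0$, $H'+G_1=G$, and $H'\cong G/G_1$ is weakly isomorphic to $H$, yielding the desired decomposition.

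To build this section, I would decompose the bounded group $G/G_1$ into its $p$-primary parts and lift a Pr\"ufer basis of each part to $G[m]$: basis elements lying in $\bar{H}:=(H+G_1)/G_1$ are lifted to elements of $H$ using the inclusion $H\hookrightarrow G$, while basis elements not in $\bar{H}$ must be lifted to elements of $G[m]$ of matching order. The main obstacle is precisely the existence of such order-preserving lifts for basis elements outside $\bar{H}$: a priori, an element of order $p^k$ in $G/G_1$ need not lift to an element of $G[m]$ of order $p^k$, this being an Ext-problem governed by $mG/mG_1$. I expect this obstacle to be controlled by the combination of $\overline{m}_{G/G_1}=\overline{m}_H$ and the maximality of $G_1$: any failure of an order-$p^k$ lift would manifest as excess divisibility in $G$ inflating $\overline{m}_{G/G_1}$ beyond $\overline{m}_H$, contradicting the weak isomorphism; meanwhile, any remaining obstruction could be absorbed into $G_1$ by enlarging it, contradicting maximality. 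Formalizing this obstruction-cancellation argument, likely via an iterated application of Kulikov's theorem that bounded pure subgroups of abelian groups are direct summands, will constitute the technical heart of the proof.
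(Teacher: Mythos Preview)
Your $(\Leftarrow)$ direction matches the paper exactly. The $(\Rightarrow)$ direction, however, takes a genuinely different route from the paper and contains a real gap at its ``technical heart''.

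The paper does \emph{not} try to locate the complementary summand $G_1$ first. Instead it constructs $H'$ directly, prime by prime, using the decomposition $H_{p_j}=H^{>}_{p_j}\oplus H^{\leq}_{p_j}$ from the proof of Theorem~1.2. It first shows, using the canonical map $G\to G/G^{*}$, that the finite piece $H^{>}_{p_j}$ is servant (pure) in $G_{p_j}$ and hence a direct summand by Pr\"ufer--Kulikov; inside the resulting complement it then takes a \emph{maximal} subgroup $H^{*}_{\max}$ containing the image of $H^{\leq}_{p_j}$ and disjoint from $d_jG^{*}_{p_j}$, and invokes Fuchs, Theorem~27.7, to conclude that $H^{*}_{\max}$ is pure, hence a bounded pure subgroup, hence a direct summand. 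Setting $H'_{p_j}=H^{>}_{p_j}\oplus H^{*}_{\max}$ gives the required $H'$. Note that the maximality argument is applied to an overgroup of (a piece of) $H$, not to the would-be complement.

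Your approach instead enlarges $G^{*}$ to a maximal $G_1$ with $G_1\cap H=0$ and then seeks a section $\sigma:G/G_1\to G$ whose image contains $H$. The gap is precisely here. Maximality of $G_1$ forces $\bar H$ to be \emph{essential} in $G/G_1$ (every nonzero $\bar g$ has a nonzero multiple in $\bar H$); consequently, whenever $\bar H\subsetneq G/G_1$, the subgroup $\bar H$ is \emph{not} pure and hence \emph{not} a direct summand of $G/G_1$. In that situation you cannot choose a Pr\"ufer basis of $G/G_1$ extending one of $\bar H$, so your proposed recipe ``lift basis elements in $\bar H$ to $H$, lift the others with matching order'' is not well-posed as stated. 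For a concrete instance, take $G=\mathbb{Z}/4\oplus\mathbb{Z}/2\oplus(\mathbb{Z}/4)^{(\infty)}$ with generators $a,b,c_i$, let $H=\langle 2a,c_i\rangle$ and $G^{*}=0$; then the maximal $G_1$ is $\langle b\rangle$, and $\bar H=\langle 2\bar a,\bar c_i\rangle$ is a proper essential subgroup of $G/G_1$. A complement $H'=\langle a,c_i\rangle\supseteq H$ does exist here, but it is not produced by lifting a basis of $G/G_1$ ``through $H$''. Your heuristic that a failure of order-preserving lifts would inflate $\overline m_{G/G_1}$ or contradict maximality is not an argument: even when individual order-preserving lifts exist, you must still show that the resulting map is a homomorphic section and that its image contains all of $H$, and neither follows from what you wrote. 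The paper's approach avoids this difficulty entirely by never attempting to split off $G_1$; it builds $H'$ as a pure bounded subgroup and splits that off instead.
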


This theorem implies that a typical $H$-action can be extended to a
$G$-action because there is an extension to a free $H^{'}$-action
and every $H^{'}$-action can be extended to a $G$-action by, for
example, identities on $G_1$.
 \begin{proof} "$\Leftarrow$" is a clear application of Theorem 6.7.
To get "$\Rightarrow$", we employ the proof of the main theorem
notation. Fix a $j$, and a representation  $H_{p_j}=H^>_{p_j}\oplus
H^\leq_{p_j}$.

Remark first that $H^>_{p_j}$ is servant in $G_{p_j}$. If not, then
consider the canonical homomorphism $\varphi$ of $G$ onto $G/G^{*}$,
where $G^{*}$ is as in Theorem 6.7. Therefore $\varphi (H^>_{p_j})$
is not servant in $G/G^{*}$, because $\varphi$ is the group
isomorphism if it is restricted to $H$. However $\varphi
(H^>_{p_j})$ is the direct summand in $G/G^{*}$ because $G/G^{*}$ is
weakly isomorphic to $H$. We get a contradiction.

By the the
 Pr\"{u}fer-Kulikov theorem, for any countable abelian group,
 every its bounded servant subgroup is a direct summand. Thus
$G_{p_j}=H^>_{p_j}\oplus G^{*}_{p_j}$, $H_{p_j}=H^>_{p_j}\oplus
H^{*}_{p_j}$ for some $H^{*}_{p_j}\leq G^{*}_{p_j}$, where
$H^{*}_{p_j}$ is isomorphic to $H^\leq_{p_j}$.

Let us define  $H^{'}_{p_j}$. If $H_{p_j}$ is finite, then we put
 $H^{'}_{p_j}=H_{p_j}=H^>_{p_j}$. Assume that $H_{p_j}$ is
 infinite.
Obviously, $\# d_jH^{*}_{p_j}=1$, and $d_j=p_j^k$ for some positive
integer $k$. Moreover, exploiting $\varphi$ again, we get that
\[
\# H^{*}_{p_j}\cap d_jG^{*}_{p_j}=1.
\]

Denote by $M$ the non-empty set of all subgroups $H^{*}\leq
G^{*}_{p_j}$ such
  that both $\# H^{*}\cap d_jG^{*}_{p_j} =1$ and
 $H^{*}_{p_j}\leq H
 ^{*}$. Consider the restriction to $M$
 of the natural partial order $G_1\leq G_2$ on the set of all subgroups of
 $G$. Fix a maximal element, say
  $ H^{*}_{ max}$. Then $ H^{*}_{ max}$ is servant in $G^{*}_{p_j}$
  (see, for example,
  [12], Theorem 27.7). Obviously,
  \[
  d_jH^{*}_{ max}\subseteq H^{*}_{ max}\cap d_jG^{*}_{p_j}=\{0\},
  \mbox{ and} \ H^{*}_{p_j}\leq H^{*}_{ max}.
  \]
Therefore $ H^{*}_{ max}$ is weakly isomorphic to $H^{*}_{p_j}$.

Put $H^{'}_{p_j}=H^>_{p_j}\oplus H^{*}_{ max}$. It is clear that
$H_{p_j}\leq H^{'}_{p_j}\leq G_{p_j}$, $H^{'}_{p_j}$ is weakly
isomorphic to $H_{p_j}$, and $H^{'}_{p_j}$ is a direct summand in
$G_{p_j}$.

Let us finally define $H^{'}$ as $\oplus_jH^{'}_{p_j}$. It is clear
that $H^{'}$ is weakly isomorphic to $H$, and $H^{'}$ is a direct
summand in the torsion part of $G$. Thus $H^{'}$ is a bounded
servant subgroup of $G$, and Theorem 6.8 is proved.
\end{proof}
Let us mention a dual analog of Theorem 6.8 as well.
\begin{theorem} Let $G$ be any countable
 abelian group $G$, $H$ its infinite subgroup.
Then the following assertions are equivalent:
\begin{description}
 \item[(i)] A typical $H$-action can be extended to a
$G$-action.
  \item [(ii)]  $\widehat{G}$ can be represented as the direct
  sum of two (possibly trivial) closed subgroups $G_1^{*}$ and
  $G_2^{*}$ such that both  $ \mbox {\textbf{Ann} } H\geq G_2^{*}$
    and there is a $G_1^{*}$-dense subgroup $G_1\leq G_1^{*}$
    satisfying  $\# G_1\cap \mbox {\textbf{Ann} } H=1$.

\end{description}
\end{theorem}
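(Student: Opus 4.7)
The plan is to prove Theorem~6.9 by translating condition~(ii) into a group-theoretic statement about $G$ itself via Pontryagin duality, and then reducing to Theorem~1.2, Theorem~6.5 and Theorem~6.8. The dictionary I will use is that a topological direct-sum decomposition $\widehat{G}=G_1^{*}\oplus G_2^{*}$ into closed subgroups dualizes to a discrete decomposition $G=G'\oplus G''$, where $G'=\mathrm{Ann}_{G}\,G_2^{*}$ is naturally isomorphic to $\widehat{G_1^{*}}$ and $G''=\mathrm{Ann}_{G}\,G_1^{*}\cong\widehat{G_2^{*}}$, and the restriction of characters to $G'$ identifies $G_1^{*}$ with $\widehat{G'}$. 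Under this identification, the hypothesis $G_2^{*}\leq\mathrm{Ann}\,H$ is equivalent to $H\leq G'$, and for any subgroup $G_1\leq G_1^{*}$ the intersection $G_1\cap\mathrm{Ann}_{\widehat{G}}\,H$ coincides with $G_1\cap\mathrm{Ann}_{\widehat{G'}}\,H$; density of $G_1$ in $G_1^{*}$ transfers to density in $\widehat{G'}$.

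For the implication (ii)$\Rightarrow$(i), the translated hypothesis is precisely condition~(ii) of Theorem~6.5 applied to the pair $H\leq G'$, so a typical $H$-action extends to a free $G'$-action; extending this by identities on the summand $G''$ (a trivial special case of Lemma~2.2) then yields a $G$-action, which is exactly~(i).

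For (i)$\Rightarrow$(ii) I would split on whether $H$ is infinite bounded. If $H$ is not infinite bounded, Theorem~1.2 makes~(i) automatic, Theorem~6.5 produces a dense subgroup $G_1\leq\widehat{G}$ with $G_1\cap\mathrm{Ann}\,H=\{0\}$, and one may simply set $G_1^{*}=\widehat{G}$, $G_2^{*}=\{0\}$. If $H$ is infinite bounded, Theorem~6.8 yields a decomposition $G=H'\oplus G_0$ with $H\leq H'$ and $H'$ weakly isomorphic to $H$; dualizing gives $\widehat{G}=\widehat{H'}\oplus\widehat{G_0}$, and one sets $G_1^{*}=\widehat{H'}$, $G_2^{*}=\widehat{G_0}$. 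The inclusion $G_2^{*}\leq\mathrm{Ann}\,H$ holds since characters vanishing on $H'$ vanish on $H\leq H'$. Weak isomorphism of $H$ and $H'$ combined with Theorem~1.2 applied to the pair $H\leq H'$ gives a typical free extension, so Theorem~6.5 provides the required dense subgroup $G_1\leq\widehat{H'}=G_1^{*}$ with $G_1\cap\mathrm{Ann}_{\widehat{H'}}\,H=\{0\}$; by the dictionary this equals $G_1\cap\mathrm{Ann}_{\widehat{G}}\,H$, as desired.

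The main obstacle is not a deep step but the careful Pontryagin bookkeeping: verifying that a topological direct-sum decomposition of the compact group $\widehat{G}$ genuinely dualizes to a direct-sum decomposition of the discrete group $G$ with the claimed identification of annihilators as direct summands, and that density of $G_1$ in $G_1^{*}$ corresponds under the identification $G_1^{*}\cong\widehat{G'}$ to triviality of the annihilator of $G_1$ inside $G'$. Once these identifications are spelled out, Theorem~6.9 follows as the combined package of Theorems~1.2,~6.5 and~6.8 already established in the paper.
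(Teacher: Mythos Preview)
Your proposal is correct and follows exactly the route the paper intends: the paper itself says only that Theorem~6.9 ``is a natural corollary of Theorems~6.5, 6.8, and we leave the proof to the reader,'' and your argument is precisely a careful execution of that sketch via Pontryagin duality, with Theorem~1.2 (which underlies both 6.5 and 6.8) invoked where needed. The duality bookkeeping you flag as the main obstacle is indeed the only thing requiring care, and your description of it is accurate.
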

This theorem is a natural corollary of Theorems 6.5, 6.8, and we
leave the proof to the reader.
\begin{remark} We omit certain dual versions of Theorem 6.7, or,
say, of a slight modification of Theorem 6.8, where we replace the
conditions on $G$ by $G$ can be represented as $G=G_1\oplus G_2$
such that $\# G_2\cap H=1$ and $G_1$ is weakly isomorphic to $H$.
\end{remark}
\begin{remark} Assume $H$ is an infinite bounded group and
$\pi_H(\Omega_G)$ is
 a typical set. Denote
 \[D=\{G^{*}\leq G :
 \# G^{*}\cap H=1 \ \& \
 G/G^{*} \mbox{ is weakly  isomorphic to }H \}.
 \]
    In fact, proving Theorem 6.7 we showed the following claim.
 \[
 \mbox{For a typical } C, \
 \pi^{-1}_H(C)\subseteq A=\cup_{G^{*}\in D}\varphi^{*}(\Omega_{G/G^{*}}).
 \]
 It implies that the question when $\pi_H(B)$ is a typical set
 is reduced to the same question for the restrictions of $B$ to the
 subspaces $\varphi^{*}(\Omega_{G/G^{*}})$, where $\pi_H$ becomes  ess.
 category preserving.

 In spite of the fact that $D$ may have continuum many elements,
 the set $A$ is always
 nowhere dense if $G$ is not weakly isomorphic to $H$.
  Indeed, the density somewhere implies the density
 everywhere. Therefore, the set of locally dense points for $\pi_H$
 is dense in $\Omega_G$, and $\pi_H$ is ess.
 category preserving.

  Besides, it is easy to check that
 \[
\cup_{G^{*}\in D}
 \varphi^{*}(\Omega_{G/G^{*}})\subseteq B=\cap_g\cup_{h\in
 H^>} \{T\in \Omega_G : T_{dg}=T_{dh}\}.
  \]
We note that, as a rule, the inclusion $A\subseteq B$ is proper.
\end{remark}

We now turn to another certain application of the main theorem.
There is a bit stronger property then to be $monothetic$ for
topological groups. Namely, following [27], a topological group $G$
is called $generically$ $monothetic$ if the set of $g$ that generate
a dense subgroup of G is a dense $G_\delta$-set in $G$.  Note that
in the definition above, the condition that the set be $G_\delta$ is
automatic, so one only has to check that it is dense. Note then that
$G$ is clearly generically monothetic if for any positive integer
$n$ $<ng_0>$  is dense in $G$ for some $g_0\in G$.

There has been considerable interest in the study of centralizers of
typical group actions (see, for example, most new preprints [27],
[31]). One of the new reasons for that, among others, is to
understand all the similarities that come if we replace $\Omega$, as
a target for representations of a group, with other sufficiently
well developed automorphism groups , say with the unitary group of a
separable, infinite-dimensional Hilbert space or the group of
isometries of the universal Urysohn metric space (see, for example,
[27], [34]). Let us remind that the centralizer of a typical
transformation
 is a closed abelian non-locally compact monothetic subgroup of $\Omega$.
\begin{theorem} Let $G$ be any countable abelian group.
Then the centralizer of a typical $G$-action is generically
monothetic if and only if $G$ is unbounded.
\end{theorem}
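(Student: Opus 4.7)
The plan is to prove both directions via Theorem 4.5 — which for typical $T$ identifies $C(T)$ with $CL\{T_g:g\in G\}$ — together with the explicit free actions of Examples 3.1, 5.1 and 5.2 on the positive side, and an exponent obstruction on the negative side.

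For the forward direction with $G$ unbounded I will exhibit a single free $G$-action whose centralizer is a compact Polish monothetic subgroup of $\Omega$, then pass to typicality via the topological $0$--$1$ law. Three subcases arise. (i) If $G$ has an element $g_1$ of infinite order, take the action $T$ of Example 3.1, so $T_{g_1}$ is ergodic with discrete spectrum and $CL\{T_g:g\in G\}=CL\{T_{g_1}^n:n\in\mathbb{Z}\}$ equals the compact group of rotations on the character dual of the spectrum. (ii) If $G$ is unbounded torsion containing $H^{*}=\bigoplus\langle h_i\rangle$ with $\deg h_i\to\infty$, apply Example 5.2. (iii) Otherwise, by the subgroup analysis inside the proof of the main theorem, $mG$ has finite rank for some $m$ and being unbounded must contain a copy of $\mathbb{C}_{p^\infty}$, so apply Example 5.1. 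In subcases (ii)--(iii) the action acts by shifts on $(\mathbb{R}/\mathbb{Z})^{\mathbb{N}}$ with $CL\{T_g\}$ equal to the full compact shift group. Every compact Polish monothetic abelian group $K\cong\widehat{A}$ (with $A$ countable discrete) is generically monothetic, since topological generators correspond to characters injective on $A$, and the non-generators form $\bigcup_{a\in A\setminus\{0\}}\{\chi:\chi(a)=0\}$, a countable union of proper closed subgroups of Haar measure zero, producing a dense $G_\delta$. By the weak Rokhlin property recalled in Subsection 6.1 the conjugation orbit of the distinguished $T$ is dense in $\Omega_G$, so the conjugation-invariant set $\{T:C(T)\text{ is generically monothetic}\}$ is non-meager, hence typical by the $0$--$1$ law.

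For the reverse direction assume $G$ has finite exponent $d$. The set $F_d=\{S\in\Omega:S^d=\mathrm{id}\}$ is closed, and contains every $T_g$ since $T_g^d=T_{dg}=\mathrm{id}$, hence $CL\{T_g\}\subseteq F_d$. For $G$ infinite, Theorem 4.5 yields $C(T)=CL\{T_g\}\subseteq F_d$ for typical $T$; if $C(T)$ were generically monothetic, a topological generator $S_0$ would force $C(T)=\overline{\langle S_0\rangle}=\langle S_0\rangle$ to have at most $d$ elements, contradicting the infiniteness of $\{T_g:g\in G\}\subseteq C(T)$ for typical (free) $T$. For finite $G$ the argument is handled separately: for trivial $G$ one has $C(T)=\Omega$, which is well-known not to be generically monothetic, and for non-trivial finite $G$ the centralizer of a typical free action admits a natural $\Omega$-factor along an orbit transversal, again failing generic monotheticity. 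The main delicate step is verifying that ``$C(T)$ is generically monothetic'' defines a Baire-measurable conjugation-invariant subset of $\Omega_G$ so the $0$--$1$ law applies; this reduces to working on the typical locus $\{T:C(T)=CL\{T_g\}\}$ provided by Theorem 4.5, where $C(T)$ is an explicit analytic image of the parameter space and the $G_\delta$-description of topological generators above transfers.
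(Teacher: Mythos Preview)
Your reverse direction is essentially the paper's argument and is fine: bounded exponent forces every element of $CL\{T_g\}$ to have order dividing $d$, so no element can topologically generate an infinite group.

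Your forward direction has a genuine gap. You exhibit, via Examples~3.1, 5.1, 5.2, a single free $G$-action $T_0$ whose centralizer (the compact group of all shifts on the torus) is generically monothetic, and then try to pass to typicality by the $0$--$1$ law. But the $0$--$1$ law only tells you that a conjugation-invariant Baire set is meager or comeager; it does not tell you which. Your argument that the set is ``non-meager'' is just that it contains the conjugacy class of $T_0$, which is dense. Dense does not imply non-meager: the conjugacy class of a single free action is dense in $\Omega_G$ but is itself meager (there are continuum many non-conjugate free actions in any neighborhood). So you have not ruled out that ``$C(T)$ generically monothetic'' is meager. Indeed, your $T_0$ has discrete spectrum and compact centralizer, while a typical $T$ has non-locally-compact centralizer, so the property you verified for $T_0$ gives no direct information about the typical regime.

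The paper's route avoids this by never appealing to a single witness. For $G$ containing $g$ of infinite order it uses Theorem~6.2: the restriction $\pi_{\langle ng\rangle}:\Omega_G\to\Omega_{\langle ng\rangle}$ is ess.\ category preserving, so the preimage of the typical set $\{S:C(S)=CL\{S^k\}\}$ (Theorem~4.5 for $\mathbb{Z}$) is typical in $\Omega_G$. Combined with $C(T)=CL\{T_g:g\in G\}$ for typical $T$, this gives $C(T)=CL\{T_g^{nk}:k\in\mathbb{Z}\}$ for \emph{every} $n$, whence $C(T)$ is generically monothetic by the remark preceding the theorem. The unbounded torsion case is then reduced to this one by applying the same mechanism to $\pi_H:\Omega_{H\oplus\mathbb{Z}}\to\Omega_H$, which is ess.\ category preserving because $H$ is unbounded. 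The key input you are missing is precisely this category-preservation of $\pi_H$, which is what lets one transport typicality from one space of actions to another.
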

\begin{proof} It is clearly true for any bounded $G$. Indeed, if $G$
is finite, then the centralizer of every free $G$-action is just the
group of all $G$-extensions under certain topology. Thus, it is not
even abelian. Besides, for any infinite bounded $G$, by Theorem 4.5
every element of the centralizer is of finite order for a typical
$G$-action. This means that the centralizer is not even
(topologically) finitely generated.

Let $G$ contain an isomorphic copy of $\mathbb{Z}$, say  $<g>$.
Combining Theorems 6.2 and 4.5 for $H_n=<ng>$, $n\in \mathbb{N}$, we
easily deduce that
 \[
C\{T_h:h\in G\}=CL \{< T_{ng}>\}, n\in \mathbb{N},
\]
for a typical $G$-action $T$. Therefore, the centralizer is
generically monothetic. It implies, in particular, that the
cetralizer is generically monothetic for a typical
$H\oplus\mathbb{Z}$-action, where $H$ is any unbounded torsion
abelian group. Applying Theorems 6.2 and 4.5 again, we have the same
for a typical $H$-action. This completes the proof.
\end{proof}

\section{Non-abelian case, questions, and comments}

Let $G$ be any countable group. The group $\Omega$ acts on
$\Omega_G$ by conjugations. It is well known that the set  of all
$G$-actions with a dense orbit under conjugations, say $F'=F'_G$,
forms a dense $G_\delta$-set.

Since the set $F$ of free $G$-actions is equal to $F'$ for every
countable amenable group $G$, the set $F$ remains characteristic for
the restriction map $\pi_H$ in the amenable world according to the
following fact.
\begin{proposition} Let $H$ be any subgroup of a countable group $G$.
The following are equivalent:\begin{description}
\item[(i)] The map $\pi_H$ is ess. category preserving.
   \item[(ii)] $\pi_H(F')$ is a typical set.
  \end{description}
\end{proposition}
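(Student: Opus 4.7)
For (i) $\Rightarrow$ (ii) I would argue by the topological $0$-$1$ law. Since $F'$ is a dense $G_\delta$-set in $\Omega_G$, it is typical, so under (i) essential category preservation gives that $\pi_H(F')$ is non-meager. As a continuous image of a Borel set it is analytic and hence almost open; it is also invariant under conjugation by $\Omega$, because $F'$ is conjugation-invariant and $\pi_H$ is $\Omega$-equivariant. The $0$-$1$ law on $\Omega_H$ then forces $\pi_H(F')$ to be meager or typical, and being non-meager it must be typical.

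For (ii) $\Rightarrow$ (i) I plan to apply the criterion from the paragraph preceding Theorem 6.2: it is enough to verify that $\pi_H(\Omega_G)$ is dense in $\Omega_H$ and that $\pi_H(B)$ is non-meager for every non-meager $B \subseteq \Omega_G$. Density is immediate since $\pi_H(F') \subseteq \pi_H(\Omega_G)$ is typical. For the non-meagerness, Lemma 2.4 reduces the task to showing that $LocDen\, \pi_H$ is dense in $\Omega_G$. Conjugation by an element of $\Omega$ is a homeomorphism of both $\Omega_G$ and $\Omega_H$ intertwining $\pi_H$, so $LocDen\, \pi_H$ is $\Omega$-conjugation invariant; since every $T \in F'$ has dense conjugation orbit by definition, it therefore suffices to exhibit a single $T \in F'$ that is locally dense for $\pi_H$.

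To produce such a $T$, I would use that $\pi_H(F')$ is analytic and typical and hence almost open. Writing $\pi_H(F') = U \Delta N$ with $U$ open dense in $\Omega_H$ and $N$ meager, I pick $S \in U \setminus N \subseteq \pi_H(F')$ and choose $T \in F'$ with $\pi_H(T) = S$. The main obstacle of the argument is verifying that this $T$ is locally dense for $\pi_H$: given any neighborhood $V$ of $T$ in $\Omega_G$, one must find a neighborhood $W$ of $S$ in $\Omega_H$ on which $\pi_H(V)$ is dense. The plan is to combine (a) the comeagerness of $\pi_H(F')$ inside the open set $U$ containing $S$; (b) the density of the conjugation orbit of $T$ in $\Omega_G$ together with the equivariance $\pi_H(\varphi^{-1} T \varphi) = \varphi^{-1} S \varphi$, which places the orbit points $\varphi^{-1} S \varphi$ into $\pi_H(V)$ for all $\varphi \in \Omega$ close to the identity; and (c) a lifting step that approximates points of $\pi_H(F') \cap W$ by such small conjugations, yielding the desired density. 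The perturbation/lifting argument in step (c) is the delicate part and is what makes the typicality hypothesis in (ii) indispensable.
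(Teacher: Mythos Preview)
Your argument for (i)$\Rightarrow$(ii) is correct; the paper simply says this direction is ``obvious'' because, as established in the discussion preceding Theorem~6.2 (and Remark~6.3), an ess.\ category preserving map sends typical sets to typical sets. You have essentially reproduced that reasoning.

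For (ii)$\Rightarrow$(i) there is a genuine gap at your step~(c), which you yourself flag as ``the delicate part'' without carrying it out. You want to show that your chosen $T\in F'$ is locally dense for $\pi_H$, i.e.\ that for every neighborhood $V$ of $T$ the image $\pi_H(V)$ is dense in some neighborhood of $S=\pi_H(T)$. Your plan is to cover a neighborhood $W$ of $S$ by conjugates $\varphi^{-1}S\varphi$ with $\varphi$ close to the identity. But nothing in the hypotheses forces the \emph{local} conjugation orbit $\{\varphi^{-1}S\varphi:\varphi\text{ near }1\}$ to be dense near $S$; the fact that the \emph{global} orbit of $S$ is dense in $\Omega_H$ (which does follow from $T\in F'$ and equivariance) does not localize. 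Choosing $S$ outside the meager set $N$ in $\pi_H(F')=U\triangle N$ does not help either, since the meager ``error sets'' arising from each $\pi_H(V)$ depend on $V$ and need not be contained in $N$. I do not see how to complete the argument along these lines without importing the paper's idea.

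The paper's proof of (ii)$\Rightarrow$(i) avoids locally dense points altogether and is more direct. First it shows that $\pi_H(O)$ is non-meager for every non-empty open $O\subseteq\Omega_G$: since every element of $F'$ has dense conjugation orbit, countably many conjugates $\varphi^{-1}(O\cap F')\varphi$ (with $\varphi$ ranging over a countable dense $\Omega'\subseteq\Omega$) cover $F'$, so by equivariance $\pi_H(F')$ is a countable union of conjugates of $\pi_H(O\cap F')$, forcing the latter to be non-meager. Second, if $\pi_H(B)$ were meager for some non-meager $B$, then covering $\pi_H(B)\subseteq\bigcup_i C_i$ by closed nowhere dense sets, some $\pi_H^{-1}(C_{i_0})$ would be closed and non-meager, hence would contain a non-empty open set $O^*$; but then $\pi_H(O^*)\subseteq C_{i_0}$ is nowhere dense, contradicting the first step. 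This bypasses the need to pin down any single locally dense point.
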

\begin{proof} The implication (i)$\Rightarrow$ (ii)
is obvious. To get (ii) $\Rightarrow$ (i), we first argue as in
[27], Proposition A.7. Namely, $F'$ is a Polish space endowed with
the induced topology. Fix a countable dense set $\Omega'\subseteq
\Omega$, and an open non-empty subset $O$ of $F'$. Then
$\pi_H(F')=\pi_H(\Omega'^{-1}\cdot O)=\Omega'^{-1}\cdot\pi_H( O)$.
It implies that $\pi_H( O')$ is not meager for every non-empty open
subset $O'$ of $\Omega_G$.

Assume the map $\pi_H$ is not ess. category preserving. Then $\pi_H
(B)$ is meager for some set $B\in \Omega_G$ of the second category.
Take a collection $C_i$, $i\in \mathbb{N}$, of closed nowhere-dense
sets such that $\pi_H (B)\subseteq\cup_i C_i$. Then a closed set
$\pi^{-1}_H (C_{i_0})$ is not meager for some $i_0$. It implies that
there is a non-empty open set $O^*\subseteq \pi^{-1}_H (C_{i_0})$.
We have a contradiction and Proposition 7.1 is proved.
\end{proof}
 How much can we extend a typical $H$-action? Obviously, $\pi_H$ is
 still ess. category preserving if we replace $G$ with any free
 product, say $G\ast G_1$, since the topological Fubini theorem
 applies.  Besides, ess. category preserving maps
$\pi^G_H$ (i.e. $\pi_H$ defined on $\Omega_G$) are closed under
taking inductive limits in the following sense.

\begin{proposition} Let $H\leq G_1\ldots\leq G_n\leq \ldots $ be a
countable chain of countable groups $H, G_i, i\in \mathbb{N}$.
Suppose all $\pi^{G_i}_H$ are ess. category preserving; then
$\pi^{\lim_\rightarrow G_i}_H$ is ess. category preserving as well.
\end{proposition}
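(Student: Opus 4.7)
The plan is to prove that $\pi := \pi^{\lim_\rightarrow G_i}_H$ is essentially category preserving by showing directly that $\pi^{-1}$ sends open dense subsets of $\Omega_H$ to open dense subsets of $\Omega_G$; both definitional clauses will then follow by simple Baire-category bookkeeping. Set $G := \bigcup_i G_i$, and note the factorization $\pi = \pi^{G_j}_H \circ \pi^G_{G_j}$ for every $j$. The key structural observation is that every basic cylindric open set $V \subseteq \Omega_G$ is determined by finitely many coordinates $g'_1,\ldots,g'_l \in G$, which necessarily lie in some $G_j$; hence $V = (\pi^G_{G_j})^{-1}(V_0)$ for a basic open $V_0 \subseteq \Omega_{G_j}$, which will let me transfer density questions down to the level where the hypothesis applies.

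The one nontrivial ingredient is that $\pi^G_{G_j}(\Omega_G)$ is dense in $\Omega_{G_j}$ for every $j$; this is the co-induced action fact recalled in the footnote preceding Theorem~6.2. Given any $G_j$-action $S$, its co-induced $G$-action $\widetilde S$ has $S$ as a factor of $\widetilde S|_{G_j}$, and every factor of an action is a limit of its conjugates, so $S$ is approximated in $\Omega_{G_j}$ by restrictions to $G_j$ of conjugates of $\widetilde S$, all of which belong to $\pi^G_{G_j}(\Omega_G)$.

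With these in hand, take open dense $D \subseteq \Omega_H$ and a nonempty basic open $V = (\pi^G_{G_j})^{-1}(V_0)$ in $\Omega_G$. By hypothesis $\pi^{G_j}_H$ is ess.\ category preserving, so $(\pi^{G_j}_H)^{-1}(D)$ is open and dense in $\Omega_{G_j}$, making $W := V_0 \cap (\pi^{G_j}_H)^{-1}(D)$ a nonempty open subset of $\Omega_{G_j}$. By the density of $\pi^G_{G_j}(\Omega_G)$ in $\Omega_{G_j}$, I may pick some $S \in W \cap \pi^G_{G_j}(\Omega_G)$ and then any $G$-extension $T \in \Omega_G$ of $S$; since $T|_{G_j} = S \in V_0$, we have $T \in V$, and since $T|_H = S|_H \in D$, we have $T \in \pi^{-1}(D)$. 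Thus $V \cap \pi^{-1}(D) \neq \emptyset$, proving density.

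The remainder is formal: taking countable intersections, $\pi^{-1}$ sends dense $G_\delta$ sets to dense $G_\delta$ sets, and by complementing (every meager set is contained in the complement of some typical set) it sends meager sets to meager sets. For the image direction, if $A \subseteq \Omega_G$ were non-meager but $\pi(A)$ meager, the tautological inclusion $A \subseteq \pi^{-1}(\pi(A))$ would force $A$ meager, a contradiction. I expect the main obstacle to be the density step for $\pi^G_{G_j}(\Omega_G)$, which is the one place where the specific structure of group actions (the co-induced construction) is invoked rather than abstract topology; once that density is granted, the rest of the argument is a routine two-step Baire-category assembly.
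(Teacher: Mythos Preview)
Your argument is largely correct and takes a genuinely different route from the paper, but there is a small gap in the ``bookkeeping'' at the end. From ``$\pi^{-1}$ sends open dense to open dense'' you correctly deduce that $\pi^{-1}$ sends meager to meager, and hence (via $A\subseteq\pi^{-1}(\pi(A))$) that images of non-meager sets are non-meager. But the second definitional clause --- that \emph{pre-images} of non-meager sets are non-meager --- does not follow from this alone: knowing $\pi^{-1}$ preserves meagerness says nothing about $\pi^{-1}$ preserving non-meagerness. The easy fix is to observe (by the same co-induction argument you already invoke, applied directly to $H\leq G$) that $\pi(\Omega_G)$ is dense in $\Omega_H$, and then appeal to the paper's observation in Subsection~6.1 that density of the image together with the image-direction clause forces the full ess.\ category preserving property. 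Alternatively, cite Remark~6.3.

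Comparing approaches: the paper proves the proposition by invoking Proposition~7.1, reducing to showing that $\pi^G_H(F'_G)$ is typical. It builds a tower of typical sets $B_i\subseteq\Omega_{G_i}$ and argues that a typical $H$-action extends step by step through the $B_i$ to a $G$-action whose conjugacy orbit is dense. Your argument is more elementary: it bypasses Proposition~7.1 and the set $F'$ entirely, working directly with basic open sets and the factorization $\pi=\pi^{G_j}_H\circ\pi^G_{G_j}$. What your approach buys is transparency and independence from the somewhat delicate step-by-step extension; what the paper's approach buys is an explicit description of where the constructed $G$-extensions land (namely in $F'_G$), which is of independent interest.
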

\begin{proof}
Denote
\[
B=\bigcap_i\pi^{G_i}_H (F_{G_i}), \ B_i=F_{G_i}\cap\bigcap_{j>i}
\pi^{G_j}_{G_i} (F_{G_j}).
\]
Every $H$-action in $B$ can be extended to some element of $B_1$,
every $G_1$-action in $B_1$ can be extended to some element of
$B_2$, and so on. It means that we obtained some subset of
 $\Omega_{\lim_\rightarrow G_i}$, say $C$, satisfying $\pi_H(C)=B$.
 The reader will easily prove that $C\subseteq F'_{\lim_\rightarrow
 G_i}$. To conclude the proof, it remains to apply Proposition 7.1.
\end{proof}
\begin{proposition} Let $G$ be any countable non-abelian group, $H$
its normal abelian subgroup containing an element $h_0$ of infinite
order. Then $\pi^G_H$ is  not ess. category preserving.
\end{proposition}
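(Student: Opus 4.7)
I plan to apply Proposition 7.1 and reduce the claim to showing that $\pi_H(F'_G)$ is meager in $\Omega_H$. First, every $T \in F'_G$ is free: the set of free $G$-actions is $\Omega$-invariant and comeager, so it must meet the dense conjugation orbit of $T$, forcing $T$ itself to be free. Consequently $T\colon G \to \Omega$ is injective and $T(G) \cong G$ is non-abelian. Since $H$ is unbounded ($h_0 \in H$ has infinite order), Theorem 6.12 furnishes a typical set $\mathcal{A} \subseteq \Omega_H$ of $H$-actions whose centralizer $C(T|_H)$ is generically monothetic, in particular abelian. The goal is to show that $\pi_H(F'_G)$ meets $\mathcal{A}$ (or a suitable typical refinement of it) only in the empty set.

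Let $K = C_G(H)$, which is normal in $G$ and contains $H$. For any $G$-action $T$ one has $T(K) \subseteq C(T|_H)$ by the very definition of $K$. So, for $T$ free and $T|_H \in \mathcal{A}$, the isomorphic copy $T(K) \cong K$ must sit inside the abelian group $C(T|_H)$, forcing $K$ to be abelian. I would then split into two cases according to whether $K$ is abelian. If $K$ is non-abelian, the conclusion of the previous sentence yields an immediate contradiction for any $T \in F'_G$ with $T|_H \in \mathcal{A}$, so $\pi_H(F'_G) \subseteq \Omega_H \setminus \mathcal{A}$ is meager and Proposition 7.1 applies. If $K$ is abelian, then $K \neq G$ (otherwise $G$ would be abelian), and picking $g \in G \setminus K$ yields, by normality of $H$, a non-trivial automorphism $\alpha_g \in \mathrm{Aut}(H)$, $\alpha_g(h) = g h g^{-1}$. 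For any $T \in F'_G$, the element $T_g \in \Omega$ implements $\alpha_g$ by $\Omega$-conjugation, so $T|_H$ is $\Omega$-conjugate (as an $H$-action) to the twisted $H$-action $T|_H \circ \alpha_g$.

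To close the abelian-$K$ case I would invoke the rigidity statement that, for typical $T \in \Omega_H$, $T$ is not $\Omega$-isomorphic to $T \circ \alpha$ for any non-trivial $\alpha \in \mathrm{Aut}(H)$. This is the principal obstacle of the proof: for $H = \mathbb{Z}$ and $\alpha$ the inversion it is exactly the statement of Remark 6.4, attributed to [1]; for general countable abelian $H$ the argument of [1] has to be adapted automorphism-by-automorphism, so that for each fixed non-trivial $\alpha \in \mathrm{Aut}(H)$ the set $\{T : T \cong T \circ \alpha\}$ in $\Omega_H$ is meager, and then a countable union over $\mathrm{Aut}(H)$ of such meager sets produces the required typical rigidity set $\mathcal{A}' \subseteq \Omega_H$. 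With $\mathcal{A}'$ in hand, $\pi_H(F'_G) \cap \mathcal{A}' = \emptyset$ in the abelian-$K$ case as well, so $\pi_H(F'_G)$ is meager and, by Proposition 7.1, $\pi_H$ fails to be essentially category preserving.
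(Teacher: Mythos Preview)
Your case split on $K = C_G(H)$ is a reasonable reorganization, and your Case~1 ($K$ non-abelian) is correct; it in fact subsumes the paper's ``$H$ central'' case with a cleaner argument via Theorem~6.12 (equivalently Theorem~4.5).

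The gap is in Case~2. A minor point first: $\mathrm{Aut}(H)$ is in general uncountable (already for $H=\bigoplus_{\mathbb{N}}\mathbb{Z}$), so a ``countable union over $\mathrm{Aut}(H)$'' is not available. This is not fatal, since you only need the single automorphism $\alpha_g$ for the one $g\in G\setminus K$ you chose. The real problem is the rigidity claim itself --- that $\{T\in\Omega_H : T\cong T\circ\alpha\}$ is meager for a fixed non-trivial $\alpha$. You flag this as the principal obstacle but do not prove it. The paper only establishes the special case $\alpha=$ inversion (Remark~6.4), and its footnote explicitly warns that already extending \emph{that} to all countable abelian $H$ is ``much more complicated''. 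For a general $\alpha_g$ --- a coordinate swap or a shear on $\mathbb{Z}^n$, say --- a separate argument is required, and ``adapting [1]'' is not one.

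The paper bypasses this general rigidity by exploiting the distinguished infinite-order element $h_0$ directly. With $g\in G\setminus K$ fixed, put $h_3=g^{-1}h_0g$. If $h_3^k=h_0^k$ for some $k\neq 0$, then any $G$-extension $T$ has $T_g\in C(T_{h_0^k})$; for a typical $H$-action (via Theorems~6.2 and~4.5) one has $C(T_{h_0^k})=\overline{\{T_{h_0^k}^n:n\in\mathbb{Z}\}}$, which forces $T_g$ to commute with every $T_h$, contradicting $g\notin K$ together with freeness of $T|_H$. If instead $h_3^k\neq h_0^k$ for all $k\neq 0$, one chooses $k$ so that $\langle h_0^k,h_3^k\rangle$ is free abelian and invokes the result from~[18] that for a typical action of such a subgroup the transformations $T_{h_0^k}$ and $T_{h_3^k}$ are non-conjugate; this lifts to typical $H$-actions by Theorem~6.2 and again rules out any $G$-extension. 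Either branch closes your Case~2 without the unproven general rigidity statement.
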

\begin{proof} Assume $H$ is included in the center of $G$. Since,
for a typical $H$-action $T$, the centralizer $C(T)$ is abelian, any
extension of $T$ to a $G$-action must be abelian. Therefore its
orbit under conjugacies is not dense in $\Omega_G$. So, $\pi^G_H
(F')$ is meager.

If $H$ is not in the center, then we can choose $h_1, h_2 \in H$,
$g\in G$ such that $g^{-1}h_1g=h_2$, $ h_1\neq h_2$. If, in
addition, $g^{-1}h_0^kg=h_0^k$ for some $k\neq 0$, then a typical
$H$-action $T$ can not be extended to any $G$-action. Indeed,
otherwise take some such extension $T$, then $T_{h_i}, T_g \in
CL\{T^n_{kh_0}: n\in \mathbb{Z}\}$ because of Theorem 6.2. Hence,
$T_{h_2}=T_g^{-1}T_{h_1}T_g=T_{h_1}$, and we have a contradiction
with the freeness of a typical $H$-action.

Assume $g^{-1}h_0g=h_3$, $ h_0^k\neq h_3^k$ for any $k\neq 0$. Pick
a positive $k$ such that $H'=<h_0^k, h_3^k>$ is a free abelian
subgroup. It is well known that for a typical transformation all its
powers are not isomorphic to each other (see, for example [18]). It
follows that a typical $H'$-action does not admit any isomorphism
between $T_{h_0^k}$ and $T_{h_3^k}$ if $H'$ is cyclic. The case $H'$
has two generators is analogous. By Theorem 6.2, we conclude that
there is no isomorphism between $T_{h_0^k}$ and $T_{h_3^k}$ for a
typical $H$-action $T$. Hence $T$ can not be extended to any
$G$-action. This completes the proof.
\end{proof}
It is natural to look at possible analogs of Theorems 6.2 and 6.7 if
we omit the restriction to be abelian for $H$ and $G$. Let us list a
series of examples somewhat indicating why there is currently no
complete answer even for $H=\mathbb{Z}$.

 1. The most closed to abelian groups are finite extensions of them.
 Obviously, for every positive integer $n$
\[
G_n={\rm gr}\langle t_1,\ldots , t_n, s; (\forall i,j)
[t_it_j=t_jt_i \& st_is^{-1}=t^{-1}_i]\rangle
\]
contains a free abelian subgroup $H=<t_1,\ldots , t_n, s^2>$ of
index $2$. Arguing as in Remark 6.4, we see that
$\pi_{H'}^{G_n}(\Omega_{G_n})$ is meager for every infinite subgroup
$H'\leq H$.

Besides, it was proved in [4] that for a typical action $T$ of
\[
G_n={\rm gr}\langle t,s; (\forall i,j) [t_i, t_j]=1=t_0\ldots
t_{n-1}\rangle,
\]
where $t_i=s^its^{-i}$, $n>1$, the transformation $T_{s^n}$ has
homogeneous spectrum of multiplicity $n$ (see slightly modified
versions of $G_n$ with the same property in [11], [29]). The groups
$G_n$ are also finite extensions of abelian. Since a typical
transformation has simple spectrum, $\pi_{<s^k>}^{G_n}$ is not ess.
category preserving for every $k\neq 0$. On the other hand, every
transformation $T_s$ can be extended to a $G_n$-action if we put the
identity as  $T_t$. So, $\pi_{<s^k>}^{G_n}(\Omega_{G_n})$ is typical
for every $k\neq 0$.

 2. Our technique, working well for abelian groups, was also
based on approximations by finite actions. Since there exist
countable groups with no non-trivial irreducible finitely
dimensional unitary representations, it may happen that some
non-trivial countable groups do not admit finite actions except an
action by identities. However all the above can (in)directly work
for some groups that are not even virtually abelian. Take, for
example,
\[
G={\rm gr}\langle t_1,s; t_1s=st_1^2\rangle.
\]
 It is well known (see, for example [5], [18] ) that a typical
 transformation is spectrally disjoint to its square. Thus
$\pi^{G}_{<t_1>} (\Omega_{G})$ is meager. Besides, take
\[
G^*={\rm gr}\langle t_1,s,t_2; t_is=st_i^2, i=1,2 \&
t_1t_2=t_2t_1\rangle.
\]
We claim that $\pi^{G^*}_{G}$ is ess. category preserving. Indeed,
first note that $G^*$ being solvable is an amenable group. Observe
that, for a typical $G$-action $T$, $T_{t_1}$ has rank $1$ and is
$rigid$ (i.e. $T_{t_1}^{k_j}$ tends to the identity for some
sequence ${k_j}$) (see [5]). It follows that the centralizer
\[
C\{T_{t_1}\}=CL \{ T_{nt_1} : n\in \mathbb{Z}\}
\]
is uncountable and has no isolated points. Moreover, by the standard
topological arguments, it can be then shown that  the monothetic
group $C\{T_{t_1}\}$ can not be covered by the countable union of
closed sets $T_{jt_1}B_i$, $j\in \mathbb{Z}, i\in \mathbb{N}$, where
$B_i$ consists of all elements $S$ of the centralizer satisfying
$S^i$ is the identity. Pick an element, say $S$, which is not
covered. Then $<T_{t_1}, S>$ form a free $\mathbb{Z}^2$-action. It
easily implies that we extended $T$ to a free $G^*$-action by
putting $T_{t_2}=S$. Applying Proposition 7.1, we conclude that
 $\pi^{G^*}_{G}$ is ess. category preserving.

3. Take a free product of cyclic groups of order $2$ and $3$, i.e.
\[
G={\rm gr}\langle t,s; t^2, s^3\rangle.
\]
Every transformation is a composition two periodic transformations,
say $S_2$ and $S_3$. Moreover, with no loss of the generality, $S_i$
can be chosen satisfying $S^i_i$ ($i=2,3$) are the identity (see
[29]). It implies that $\pi_{<ts>}^G(\Omega_G)$ is typical.

4. Recall that for an inclusion $H\leq G$ of countable, discrete
groups one says that the pair $(G, H)$ has \textbf{relative
property} $\mathbf{(T)}$ if there exists $\delta > 0$ and a finite
$B\subset G$ such that if $\pi$ is a unitary representation of $G$
in a Hilbert space and $f$ is a unit vector satisfying
\[ (\forall g \in B) ||\pi(g)(f)- f|| < \delta,
 \]

then there exists a vector $f_0$ such that $\pi(g)(f_0)= f_0$ for
any $g\in H$. One says that $G$ has \textbf{property} $\mathbf{(T)}$
and is called Kazhdan, if the pair $(G, G)$ has relative property
$(T)$.

Take the pair $H\leq G$ with relative property $(T)$, where $H$ is
not Kazhdan. For a typical $G$-action $T$, $T|_H$ is not ergodic,
and a typical $H$-action is ergodic (see [20]). It implies that
 $\pi_H (F')$ is meager.

One may ask another natural question about how many extensions for a
typical $H$-action we could expect. Arguing in a more or less
standard way, it can be easily answered for countable abelian groups
$G$ (and $H$). Let us only stress that there appear new effects
(i.e. not the continuum or nothing). For example, let $G=H\oplus
\mathbb{Z}/3\mathbb{Z}$, $H=\oplus^\infty\mathbb{Z}/2\mathbb{Z}$. By
Theorem 4.5, for a typical $H$-action $T$ the centralizer of $T$
consists of elements of order $2$. Therefore $T$ has the only
trivial extension to a $G$-action.

It would be interesting to see any differences in the
classifications of pairs $(H\leq G)$ according to Theorems 6.2, 6.7,
6.8 if we take representations of groups by unitary operators of the
separable infinite-dimensional Hilbert space or by isometries of the
universal Urysohn metric space. Besides, it is expected that the
classification of pairs will be essentially different for actions by
measure preserving homeomorphisms of the Cantor set.

It would be also interesting to see if there exists one more
characteristic dynamic property for $\pi_H$ in the above sense.

\bibliographystyle{amsplain}

\end{document}